\tikzstyle{Block}=[rectangle,minimum width=3cm,minimum height=1cm,text centered,text width=5.2cm,draw=black]
\tikzstyle{Implication}=[rectangle,minimum width=3cm,minimum height=1cm,text centered,text width=5.2cm]
\tikzstyle{jian}=[<->, >=stealth]
\sloppy\allowdisplaybreaks[4]
\def\5n{\negthinspace \negthinspace \negthinspace \negthinspace \negthinspace }
\def\4n{\negthinspace \negthinspace \negthinspace \negthinspace }
\def\3n{\negthinspace \negthinspace \negthinspace }
\def\2n{\negthinspace \negthinspace }
\def\1n{\negthinspace }
   \def\cD{{\cal D}}  
\def\dbE{\mathbb{E}}     
\def\dbF{\mathbb{F}} \def\sF{\mathscr{F}}    
\def\dbH{\mathbb{H}}
   \def\cL{{\cal L}}  
   \def\cM{{\cal M}} \def\BM{{\bf M}} 
   \def\cN{{\cal N}} \def\BN{{\bf N}} 
\def\dbP{\mathbb{P}}     
\def\dbR{\mathbb{R}}     
\def\dbS{\mathbb{S}}   \def\cS{{\cal S}}  
   \def\cT{{\cal T}}  
   \def\cU{{\cal U}} \def\BU{{\bf U}}
   \def\cX{{\cal X}} \def\BX{{\bf X}} 
   \def\cY{{\cal Y}} \def\BY{{\bf Y}} 
   \def\cZ{{\cal Z}} \def\BZ{{\bf Z}} 
\def\Om{\Omega}           \def\Th{\Theta}
\def\ss{\smallskip}                
\def\ms{\medskip}                
               \def\lan{\langle}
\def\ds{\displaystyle}           \def\ran{\rangle}
\def\no{\noindent}        \def\q{\quad}                      \def\llan{\left\langle}
\def\ns{\noalign{\ss}}    \def\qq{\qquad}                    \def\rran{\right\rangle}
    \def\hb{\hbox}                     
         \def\rf{\eqref}                    
  \def\deq{\triangleq}               
            \def\({\Big (}
\def\les{\leqslant}                  \def\){\Big )}
       \def\geq{\geqslant}
\def\ges{\geqslant}       \def\esssup{\mathop{\rm esssup}}   \def\[{\Big[}
        \def\essinf{\mathop{\rm essinf}}   \def\]{\Big]}
          \def\tr{\hbox{\rm tr$\,$}}         \def\cd{\cdot}
\def\wt{\widetilde}              
\def\nn{\nonumber}        \def\ts{\times}                    \def\vf{\vartheta}
\def\a{\alpha}        \def\G{\Gamma}      \def\O{\Omega}   \def\o{\omega}
\def\b{\beta}         \def\D{\Delta}   \def\d{\delta}        
\def\z{\zeta}         \def\Th{\Theta}      
\def\e{\varepsilon}   \def\L{\Lambda}  \def\l{\lambda}        
    \def\t{\tau}       \def\i{\infty}   
\def\bde{\begin{definition}\label}    \def\ede{\end{definition}}
\def\be{\begin{equation}}
	\def\bel{\begin{equation}\label}      \def\ee{\end{equation}}
	\def\bt{\begin{theorem}\label}        \def\et{\end{theorem}}
	\def\bc{\begin{corollary}\label}      \def\ec{\end{corollary}}
	\def\bl{\begin{lemma}\label}          \def\el{\end{lemma}}
	\def\bp{\begin{proposition}\label}    \def\ep{\end{proposition}}
	\def\bas{\begin{assumption}\label}    \def\eas{\end{assumption}}
	\def\br{\begin{remark}\label}         \def\er{\end{remark}}
	\def\bex{\begin{example}\label}       \def\ex{\end{example}}
	\def\ba{\begin{array}}                \def\ea{\end{array}}
	\def\ben{\begin{enumerate}}           \def\een{\end{enumerate}}
	\newtheorem{theorem}{Theorem}[section]
	\newtheorem{definition}[theorem]{Definition}
	\newtheorem{proposition}[theorem]{Proposition}
	\newtheorem{corollary}[theorem]{Corollary}
	\newtheorem{lemma}[theorem]{Lemma}
	\newtheorem{remark}[theorem]{Remark}
	\newtheorem{example}[theorem]{Example}
\begin{document}

		\title{{\bf Stochastic Linear-Quadratic Optimal Control Problems with Random Coefficients and Markovian Regime Switching System}}

		\author{
			Jiaqiang Wen\thanks{Department of Mathematics and SUSTech International Center for Mathematics, Southern University of Science and Technology,
				Shenzhen, Guangdong, 518055, China (Email: {\tt wenjq@sustech.edu.cn}).
				This author is supported by National Natural Science Foundation of China (grant No. 12101291) and
				Guangdong Basic and Applied Basic Research Foundation (grant No. 2022A1515012017).}~,~~~
			Xun Li\thanks{Department of Applied Mathematics, The Hong Kong Polytechnic University,
				Hong Kong, China (Email: {\tt li.xun@polyu.edu.hk}).
				This author is supported by RGC of Hong Kong (grant Nos. 15209614,
				15213218 and 15215319) and partially from CAS AMSS-PolyU Joint Laboratory of Applied Mathematics.}~,~~~
			Jie Xiong\thanks{Department of Mathematics and SUSTech International center for Mathematics,
				Southern University of Science and Technology,
				Shenzhen, Guangdong, 518055, China (Email: {\tt xiongj@sustech.edu.cn}).
				This author is supported by SUSTech Start up fund Y01286120 and National Natural Science Foundation of China (grant No. 61873325).}~,~~~
			Xin Zhang\thanks{School of Mathematics, Southeast University, Nanjing, Jiangsu, 211189, China (Email: {\tt x.zhang.seu@gmail.com}).
				This author is supported by National Natural Science Foundation of China (grant No. 12171086) and Fundamental Research Funds for the Central Universities (grant No. 2242021R41082).}
		}
		
		\date{}
		
		\maketitle

		%--------------------------------------------------------------------------------------------------------
		
		\no\bf Abstract. \rm
		This paper thoroughly investigates stochastic linear-quadratic optimal control problems with the Markovian regime switching system, where the coefficients of the state equation and the weighting matrices of the cost functional are random.
		We prove the solvability of stochastic Riccati equation under the uniform convexity condition, and obtain the closed-loop representation of the open-loop optimal control using the unique solvability of the corresponding stochastic Riccati equation.
		Moreover, by applying It\^{o}'s formula with jumps, we get a representation of the cost functional on a Hilbert space, characterized as the adapted solutions of some forward-backward stochastic differential equations.
		We show that the necessary condition of the open-loop optimal control is the convexity of the cost functional, and the sufficient condition of the open-loop optimal control is the uniform convexity of the cost functional. In addition, we study the properties of the stochastic value flow of the stochastic linear-quadratic optimal control problem.
		Finally, as an application, we present a continuous-time mean-variance portfolio selection problem and prove its unique solvability.

		\ms
		
		\no\bf Key words: \rm Stochastic linear-quadratic optimal control, Markovian regime switching, random coefficient, stochastic Riccati equation, mean-variance portfolio selection.
		
		\ms
		
		\no\bf AMS subject classifications. \rm 49N10, 93E20.

		\section{Introduction}
		
		Linear-quadratic (LQ, for short) optimal control problem plays a fundamental role in control theory, which appeared with the birth of stochastic analysis and developed rapidly in recent decades due to its wide range of applications. The study of stochastic linear-quadratic (SLQ, for short) optimal control problems can be traced back to the work of Kushner \cite{Kushner-62} and Wonham \cite{Wonham 1968}.
		In the classical setting, the SLQ optimal control problem can be solved elegantly via the Riccati equation under some mild conditions on the weighting coefficients (see Yong--Zhou \cite[Chapter 6]{Yong-Zhou 1999}).
		Chen--Li--Zhou \cite{Chen-Li-Zhou 1998} investigated SLQ optimal control problems with indefinite weighting control matrix and their applications in solving continuous-time mean-variance portfolio selection problems in 1990s. From then on, there has been increasing interest in the so-called {\it indefinite} SLQ optimal control problems as well as addressing their applications (see Chen--Yong \cite{Chen-Yong-01}, Ait Rami--Moore--Zhou \cite{Ait Rami-Moore-Zhou 2001}, and Li--Zhou--Lim \cite{Li-Zhou-Lim 2002}).

		\ms
		
		To tackle SLQ optimal control problems with random coefficients, Bismut \cite{Bismut 1976} initially derived the existence and uniqueness of the solution of the stochastic Riccati equation (SRE, for short) using techniques of functional analysis. Peng \cite{Peng 1999} posted it as an open problem in the general setting in his collection of open problems for backward stochastic differential equations (BSDEs, for short). %
		Kohlmann--Tang \cite{Kohlmann-Tang 2003-2}, Tang \cite{Tang 2003} and Sun--Xiong--Yong \cite{Sun-Xiong-Yong-21} established the existence and uniqueness of solutions to the related SRE under different conditions using their methods, which partially addressed the open problem introduced by Peng \cite{Peng 1999}.
		In detail, Kohlmann--Tang \cite{Kohlmann-Tang 2003-2} studied the multidimensional backward stochastic Riccati equations and gave an application to the stochastic optimal control problem.
		Tang \cite{Tang 2003} studied general SLQ optimal control problems with random coefficients and proved the existence and uniqueness of related backward stochastic Riccati equations.
		Sun--Xiong--Yong \cite{Sun-Xiong-Yong-21} studied the SLQ  optimal control problems with random coefficients, proved the solvability of the corresponding SRE, and obtained the closed-loop representation of the open-loop optimal control.
		Li--Wu--Yu \cite{Li-Wu-Yu 2018} analyzed a special type of indefinite SLQ problem with random coefficients. For more details about the efforts devoted to the stochastic Riccati equation and its connection with SLQ optimal control problems, we refer the interested readers to Kohlmann--Tang \cite{Kohlmann-Tang 2001}, Tang \cite{Tang 2015}, etc.

		\ms		
		
		Recently, there has been a dramatically increasing interest in the SLQ optimal control problems with random jumps, such as Poisson jumps or the regime switching jumps, which are of practical importance in various fields such as economics, financial management, science, and engineering. In the past few years, researchers have focused on models of financial markets whose key parameters are described by Markov processes, such as stock returns, interest rates, and volatility.
		In particular, one could face two market regimes in financial markets, one of which stands for a bull market with price rises, while the other for a bear market with price drops. We call such a formulation the regime switching model, where the market parameters depend on market modes that switch among a finite number of regimes.
		More recently, applications of SLQ optimal control problems with regime switching models or Poisson jumps have been extensively developed. For instance, Ji--Chizeck \cite{Ji-Chizeck 1990} formulated a class of continuous-time LQ optimal control problems with Markovian jumps.
		Zhou--Yin \cite{Zhou-Yin 2003} studied a mean-variance portfolio selection with regime switching.
		Liu--Yin--Zhou \cite{Liu-Yin-Zhou 2005} studied the near-optimal controls of regime switching LQ control problems.
		Hu--Oksendal \cite{Hu-Oksendal 2008} discussed SLQ optimal control problems with Poisson jumps and partial information using the technique of completing squares.
		%
		%Yu \cite{Yu 2017} studied a kind of backward SLQ optimal control problem with the infinite horizon jump-diffusion.
		%
		Song--Tang--Wu \cite{Song-Tang-Wu 2020} established the maximum principle for progressive stochastic optimal control problems with random jumps.
		Zhang--Li--Xiong \cite{Zhang-Li-Xiong 2021} investigated open-loop and closed-loop solvabilities for SLQ optimal control problems with a Markovian regime switching system.
		Hu--Shi--Xu \cite{Hu-Shi-Xu 2021} applied the constrained SLQ control with regime switching to a portfolio problem.
		For some other important works, we refer the readers to
		\cite{Donnelly-Heunis 2012,Li-Zhou-Rami 2003,Wen-Li-Xiong 2021,Zhang-Elliott-Siu 2012}, and the references therein.

		\ms
		
		In a real market, besides the Markov chain, it is more reasonable allowing the market parameters to depend on the Brownian motion, due to the fact that the interest rates, stock rates, and volatilities are affected by the uncertainties caused by the Brownian motion.
		However, up to now, few results have been obtained on this topic.
		In this paper, inspired by the continuous-time mean-variance portfolio selection problems, we are interested in studying this topic, i.e., the SLQ optimal control problems under the Markovian regime switching system with random coefficients.
		Further, by developing some ideas of Sun--Xiong--Yong \cite{Sun-Xiong-Yong-21} and Li--Zhou--Lim \cite{Li-Zhou-Lim 2002}, we establish the existence of an open-loop optimal control and prove the unique solvability of the associated SRE.
		Also, we apply the theoretical results shown in this paper to treat a continuous-time mean-variance portfolio selection problem.
		Next, we present our main results and difficulties in detail.
		\begin{enumerate}%[~~\,\rm (i)]
			\item[(i)] We first introduce the associated stochastic Riccati equation of Problem (M-SLQ) (see \rf{optim-2}), and prove that the optimal state process of Problem (M-SLQ) is invertible under the uniform convexity condition \rf{21.8.21.3.2}. Then, we show that a bounded process $\hat P(\cd,\a(\cd))$, together with two square-integrable processes $\hat \Lambda(\cd)$ and $\hat \z(\cd)$, uniquely satisfies the associated stochastic Riccati equation \rf{21.8.23.6}, and thus establishing the unique solvability of the associated SRE (see \autoref{21.9.3.1}). Moreover, we derive a closed-loop representation for the open-loop optimal control of Problem (M-SLQ) using the unique solvability of the SRE (see \autoref{21.11.26.1}).
			\item[(ii)] In order to prove the above theoretical results, we prove some auxiliary results. We establish the equivalence between Problem (M-SLQ) and Problem ${(\hb{M-SLQ})}_0$, i.e., a control $u^*(\cd)\in\cU[t,T]$ is optimal for Problem (M-SLQ)$_0$ if and only if it is optimal for Problem ${(\hb{M-SLQ})}$ (see \autoref{Theorem4.2}), and based on the equivalence, we analyze the time-consistency of the optimal control (see \autoref{Corollary4.5}).
			Then, we obtain a quadratic representation of the stochastic value flow in terms of a bounded, left continuous, and $\mathbb{S}^{n}$-valued process (see \autoref{21.8.21.5} and \autoref{21.8.22.4}).
			In addition, by the technique of It\^{o}'s formula with jumps, we represent the cost functional of Problem (M-SLQ) as a bilinear form, in terms of the adapted solutions of some forward-backward stochastic differential equations (FBSDEs, for short) in a suitable Hilbert space (see \autoref{Theorem3.4}).
			\item[(iii)] As a financial application, we present an example of the continuous-time mean-variance portfolio selection problem and prove the unique solvability of the related mean-variance problem. Also, we derive the representation of the unique optimal investment strategy (see \autoref{22.6.20.1}), which further develops the work of Li--Zhou--Lim \cite{Li-Zhou-Lim 2002} to the Markovian regime switching system with random coefficients.
		\end{enumerate}
		Compared with Sun--Xiong--Yong \cite{Sun-Xiong-Yong-21}, the difficulties of this paper mainly come from the solvability of the associated SRE, due to the presence of the Markovian regime switching jumps.
		\begin{enumerate}%[~~\,\rm (i)]
			\item [(iv)] Firstly, due to the presence of random coefficients, the Riccati equation associated with Problem (M-SLQ) becomes a nonlinear BSDE, usually referred to as the backward stochastic Riccati equation.  Furthermore, the backward stochastic Riccati equation is driven by both the Brownian motion and the martingales $(\widetilde{N}_{kl}(\cdot))_{k,l\in\cS}$ generated by the Markov chain thanks to the occurrence of Markovian regime switching jumps in the model. Thus, the solvability of this BSDE is more complicated than that of the model with deterministic coefficients and without the Markovian regime switching jumps.
			\item [(v)] Secondly, let  $(X_j(s), Y_j(s),Z_j(s),\Gamma^j(s))$ be the solution of the FBSDE composed by the open-loop optimal state process and adjoint equation corresponding to initial state $(t,e_j,\vartheta)$. It is worth to mention that the construction of the fourth component $\boldsymbol{\G}(s)$ for the solution of matrix-valued FBSDE \eqref{22.6.25.2} is not directly extending the dimensional by combing $\G^j(s)$ as $(\G^1(s),\cdots,\G^n(s))$. Furthermore, the $\G$-term of the solution to equation \eqref{21.8.20.1} associated with the initial state $(t,\xi,\vartheta)$ is represented as $\boldsymbol{\G}(s)\circ\xi$, which is different from that of $(Y(s), Z(s))=(\BY(s)\xi, \BZ(s)\xi)$. Please see more details in \autoref{21.8.21.2}. %from $(X_j(s), Y_j(s),Z_j(s),\Gamma^j(s))$ is not just %(the solution FBSDE of the open-loop optimal   associated with initial state $\BX(t)=I_n, \a(t)=\vartheta$ is a little different from
			
			\item [(vi)] Thirdly, the existence of  process $P(t)$ appeared in the equation (42) of Sun--Xiong--Yong \cite{Sun-Xiong-Yong-21} for the representation of the value function follows directly by its definition $\dbE[\BM(T)+\int_t^T\BN(s)ds|\sF_t]$. %is defined by
				%  \begin{align*}
					%      P(t)\deq \dbE[\BM(T)+\int_t^T\BN(s)ds|\sF_t].
					%  \end{align*}
				%  However, in our case, for the value function, we have
				%  \begin{align*}
					% {V}(t,\xi,\vf)=\ \Big\langle\mathbb{E}\Big[\mathbf{M}(T,\a(T))+\int_{t}^{T} \boldsymbol{N}(s,\a(s)) d s \Big| \sF_{t}\Big] \xi, \xi\Big\rangle, % = \langle P(t,\vf) \xi, \xi\rangle.
					% \end{align*}
				In our case, we have a similar representation for the value function but with $\BM(T)$ and $\BN(s)$ replaced by functions depend on the Markov chain $\alpha(\cd)$ and $\sF_t$ replaced by the $\sigma$-field generated by the Brownian motion and the Markov chain $\alpha(\cd)$.
			To solve our problem, we need to prove that there exists a process  $P:[0,T]\ts\cS\ts\Omega\rightarrow\dbS^n$, which is $\dbF$-adapted, such that
			\begin{equation*}%\label{22.7.11.2}
				P(t,\a(t))= \mathbb{E}\Big[\mathbf{M}(T,\a(T))+\int_{t}^{T} \boldsymbol{N}(s,\a(s)) d s \Big| \sF_{t}\Big].
			\end{equation*}
			This cannot be obtained directly from the definition of conditional expectation and we give proof for this (see the proof of \autoref{21.8.21.5}). In our model, although the evolution of ${\bf X}(s)$ and ${\bf Y}(s)$ depends on the Markov chain $\alpha(\cd)$, ${\bf Y}(s) {\bf X}(s)^{-1}$ may not equal $P(s, \alpha(s))$. We further prove the equality of $P(s, \alpha(s))={\bf Y}(s) {\bf X}(s)^{-1}$ (see the proof of \autoref{21.9.3.1}), which plays an important role in solving our problem.

		\end{enumerate}

		The paper is organized as follows. In \autoref{Sec2}, we present some preliminaries and formulate Problem (M-SLQ) with random coefficients and regime switching. In  \autoref{MainResult}, we state our main results, i.e., the solvability of the corresponding stochastic Riccati equation, the closed-loop representation of the open-loop optimal control, and some auxiliary results. In  \autoref{ProofSRE}, we prove the invertibility of the optimal state process, the solvability of the corresponding SRE, and the closed-loop representation of the open-loop optimal control.
		In  \autoref{ProofPri}, we strictly prove some auxiliary results in detail.  In  \autoref{Example}, we present an example of the continuous-time mean-variance portfolio selection problem under the Markovian regime switching system with random coefficients.
		In  \autoref{Conclusion}, we conclude the results.

		\section{Preliminaries}\label{Sec2}
		
		Let $(\O,\sF, \dbF, \dbP)$ be a complete filtered probability space on which a standard one-dimensional Brownian motion $\{W(t)\}_{t\ges 0}$ and a continuous-time and finite-state Markov chain $\{\a(t)\}_{t\ges 0}$ are defined, where the processes $W(\cd)$ and $\a(\cd)$ are independent and
		$\dbF=\{\sF_t\}_{t\ges 0}$ is the natural filtration of them with $\sF_0$ containing all $\dbP$-null sets of $\sF$. Let $\dbF^W=\{\sF^W_t\}_{t\ges 0}$ be the filtration generated by $W(\cd)$ and  $\dbF^\a=\{\sF^\a_t\}_{t\ges 0}$ be the filtration generated by $\a(\cd)$.
		We identify the state space of the Markov chain $\a(\cd)$ with a finite set $\cS\deq\{1, 2 \dots, D\}$, where $D\in \mathbb{N}$. Furthermore, the generator of the Markov chain  $\a(\cd)$ under $\mathbb{P}$ is denoted by $\l(t)\deq[\l_{kl}(t)]_{k, l \in \cS}$, where
		% This is also called the rate matrix, or the $Q$-matrix.
		% Here,
		$\l_{kl}(t)$ is the constant transition intensity of the Markov chain from state $k$ to state $l$
		at time $t$.  For each fixed $k,l \in\cS$, we let $N_{kl}(t)$ be the number of jumps from state $k$ into state $l$ up to time $t$ and set $	\tilde{\l}_{kl}(t)\deq\int^{t}_{0}\l_{kl}(s)I_{\{\a(s-)=k\}} d s$.
		Let $N(t)\deq(N_{kl}(t))_{k,l\in \cal{S}}$ and $\widetilde{N}(t)\deq(\widetilde{N}_{kl}(t))_{k,l\in \cal{S}}$,
		%		\begin{align*}
			%			N(t)\deq(N_{kl}(t))_{k,l\in \cal{S}}, \quad \widetilde{N}(t)\deq(\widetilde{N}_{kl}(t))_{k,l\in \cal{S}}£¬
			%		\end{align*}
		where $\widetilde{N}_{kk} (t)\equiv0$ and $\widetilde{N}_{kl} (t)= N_{kl}(t)-\tilde{\l}_{kl}(t)$ when $k\neq l$.
		
		% Recall from the previous section, throughout this paper, denote by $(\O,\sF,\dbF,\dbP)$ a complete filtered probability space on which denote by $W=\{W(t); 0\les t < \i \}$ and $\a=\{\a(t); 0\les t< \i \}$ a standard one-dimensional Brownian motion and a continuous time, finite-state, Markov chain, respectively. In addition, $\dbF=\{\sF_t\}_{t\ges0}$ is the natural filtration of $W$ and $\a$ augmented by all the $\dbP$-null sets in $\sF$.
		
		\ms
		
		Let $T>0$ be a fixed terminal time.
		The trace of a square matrix $M$ is denoted by $\tr[M]$,
		the set of all $n\times n$ symmetric matrices is denoted by $\dbS^n$,
		the set of all $D\times D$ matrices $M\deq (M_{kl})$ with $M_{kl}\in\dbS^n$ is denoted by $\mathbb{M}_{D}(\dbS^n)$, and
		the set of all $D\times D$ matrices $M\deq (M_{kl})$ with $M_{kl}\in\mathbb{R}^{n\times m}$ is denoted by
		$\mathbb{M}_{D}(\mathbb{R}^{n\times m})$.
		For $\dbH=\dbR^n$, $\dbR^{n\ts m}$ or $\dbS^n$, denote by $L^2_{\sF}(\O;\dbH)$ (resp., $L^\i_{\sF}(\O;\dbH)$) the set of all $\sF$-measurable, $\mathbb{H}$-valued, and square integrable (resp., bounded) random variables. %, and denote by $\cX_t\deq L^2_{\sF_t}(\O;\dbR^n)$ for simplicity of further analysis.
		Denote by $L_\dbF^2(t,T;\dbH)$ (resp., $L_\dbF^\i(t,T;\dbH)$) the set of all $\dbH$-valued, $\dbF$-progressive measurable stochastic processes $\phi(s)$ with $\dbE\int^T_t|\phi(s)|^2ds<\i$ (resp., $ \esssup_{s\in[t,T]}|\phi(s)|<\i$),
		and denote by $L_\dbF^2(\O;C([t,T];\dbH))$ the set of all $\dbH$-valued, $\dbF$-adapted and continuous processes $\phi(s)$ with $\dbE[\sup_{s\in[t,T]}|\phi(s)|^2]<\i$.
		Moreover, for $\mathbb{G}=\mathbb{M}_{D}(\mathbb{R}^{n\times m})$ or $\mathbb{M}_{D}(\mathbb{S}^{n})$, we denote by $L_\dbF^2(t,T;\mathbb{G})$ the set of all $\mathbb{G}$-valued $\dbF$-progressively measurable process $\phi(s)\deq(\phi_{kl}(s))\in\mathbb{G}$ with $\dbE\int^T_t\sum_{k,l=1}^D|\phi_{kl}(s)|^2\lambda_{kl}(s)I_{\{\a(s-)=k\}}ds<\i.$ %the following condition
		%\begin{align*}
		%\dbE\int^T_t\sum_{k,l=1}^D|\f_{kl}(s)|^2\lambda_{kl}(s)I_{\{\a(s-)=k\}}ds<\i.
		%\end{align*}%
		%
		%\ms
		%
		For $\eta(s)\deq (\eta_{kl}(s))\in \cM_D(\mathbb{R}^{n\times n})$,  we further  define
		$$\eta(s)\bullet d\widetilde{N}(s)
		\deq\sum_{k,l=1}^D\eta_{kl}(s)d\widetilde{N}_{kl}(s).$$
		
		We now introduce the following state equation, which is the controlled Markovian
		regime switching linear stochastic differential equation (SDE, for short) over a finite time horizon $[t,T]$:
		\begin{equation}\label{state}
			\hspace{-0.38cm}\left\{\begin{aligned}
				\ds	&dX(s)=\big[A(s,\a(s))X(s)+B(s,\a(s))u(s)\big]ds \\
				\ns\ds&\qq\qq +\big[C(s,\a(s))X(s)+D(s,\a(s))u(s)\big]dW(s), \q s\in[t,T],\\
				\ns\ds	& \ds X(t)=\xi,\q \a(t)=\vartheta,
			\end{aligned}\right.
		\end{equation}
		%\bel{state}\hspace{-0.38cm}\left\{\ba{ll}
		%%
		%dX(s)=\big[A(s,\a(s))X(s)+B(s,\a(s))u(s)\big]ds +\big[C(s,\a(s))X(s)+D(s,\a(s))u(s)\big]dW(s), s\in[t,T],\\
		%%
		%\ns\ns\ds X(t)=\xi,\q~\a(t)=i,
		%%
		%\ea\right.\ee
		%
		where $A(t, \omega, i)$, $B(t, \omega, i)$, $C(t, \omega, i)$ and $D(t, \omega, i)$ are given $\sF^W_t$-measurable processes for each $i \in \mathcal{S}$.
		We call $(t,\xi,\vartheta)$ an {\it initial triple}, which comes from the following set:
		$$\mathcal{D}=\big\{(t,\xi,\vartheta)\ |\ t\in[0,T],\ \xi\in L^2_{\sF_t}(\Om;\dbR^n),\
		\vartheta\in L_{\sF_t^\alpha}^2(\Omega;\mathcal{S})\big\}.$$
		%
		%where %$\cT[0,T]$ is the set of all $\dbF$-stopping times valued in the time interval $[0,T]$;
		%$\cS=\{1, 2 \dots, D\}$ is the state space of the Markov chain $\a$.
		%
		In the state equation \rf{state}, the process $u(\cd)$ comes from the control space
		$\cU[t,T]\deq L^2_\dbF(t,T;\dbR^m)$, % (see Section 2),
		%%
		%$$\cU[t,T]\deq\Big\{u:[t,T]\ts\Om\ra\dbR^m\ \Big|\ u\hb{ is } \dbF\hb{-progressively measurable and }
		% \dbE\int_t^T|u(s)|^2ds<\i\Big\},$$
		%%
		called the {\it control process}, and the solution $X(\cd)$ of \rf{state} is called the {\it state process} with $(t,\xi,\vf)$ and $u(\cd)$.  % Next, for any $t\in[0,T)$ and Euclidean space $\dbH=\dbR^n$, $\dbR^{n\ts m}$ or $\dbS^n$, %$\mathbb{M}_{D}(\mathbb{R}^{n\times m})$, $\mathbb{M}_{D}(\dbS^n)$,
		% etc.),
		%	we let $L^2_{\sF}(\O;\dbH)$ and $L^\infty_{\sF}(\O;\dbH)$ be the set of all $\mathbb{H}$-valued, $\sF$-measurable, and square integrable or bounded  random variables, respectively.
		%
		Let us state our SLQ optimal control problem.
		
		\ms
		
		\noindent\bf Problem $(\hb{M-SLQ})$. \rm For any given initial triple $(t,\xi,\vf)\in\cD$, find a control $u^{*}(\cd)\in\mathcal{U}[t,T]$, such that
		\begin{align}\label{optim-2}
			{J}(t,\xi,\vf;u^{*}(\cd))=\essinf_{u(\cd)\in\cU[t,T]}{J}(t,\xi,\vf;u(\cd))
			\deq {V}(t,\xi,\vf),
		\end{align}
		where the cost functional is given as the following quadratic form
		\bel{cost}\ba{ll}
		\ds {J}(t,\xi,\vf;u(\cd))\deq\dbE_t\left[\big\lan G(\a(T))X(T),X(T)\big\ran
		+\int_t^T\llan\begin{pmatrix}Q(s,\a(s))&S(s,\a(s))^\top\\S(s,\a(s))&R(s,\a(s))\end{pmatrix}
		\begin{pmatrix}X(s)\\ u(s)\end{pmatrix},
		\begin{pmatrix}X(s)\\u(s)\end{pmatrix}\rran ds\right].
		\ea\ee
		Note that $\dbE_t[\cdot]\deq\dbE[\cdot|\sF_t]$ represents the conditional expectation with respect to (w.r.t., for short) $\sF_t$. %the filtration $\dbF$.
		For the initial triple $(t, \xi,\vf)$, we call the control process $u^{*}(\cd)$ an {\it open-loop optimal control} of Problem (M-SLQ) if it satisfies \rf{optim-2}, call the corresponding state process $X^{*}(\cdot) \equiv X\left(\cdot ; t, \xi,\vf, u^{*}(\cd)\right)$ an {\it open-loop optimal state process}, and call the state-control pair $\left(X^{*}(\cd), u^{*}(\cd)\right)$ an {\it open-loop optimal pair}. We call $(t, \xi,\vf) \mapsto {V}(t, \xi,\vf)$ the {\it stochastic value flow} of Problem ${(\hb{M-SLQ})}$, due to that the space $L_{\sF_{t}}^{2}\left(\Omega ; \mathbb{R}^{n}\right)$ becomes larger when $t$ increases.

		\begin{remark}\label{prob-expectation}\rm
			In the cost functional \rf{cost}, when the conditional expectation $\dbE_t[\cdot]$  degenerates to the expectation $\dbE$, we denote the related problem, cost functional, and value function by {\bf Problem (M-SLQ)$_0$}, $J_0(t,\xi,\vf;u(\cd))$, and $V_0(t,\xi,\vf)$, respectively. Clearly, $V_0(t,\xi,\vf)=\dbE[V(t,\xi,\vf)]$.
		\end{remark}
		% Moreover, in the cost functional \rf{cost}, when the conditional expectation $\dbE_t[\cdot]$ degenerates to the expectation $\dbE$, we denote the related problem, cost functional, and value function by {\bf Problem (M-SLQ)$_0$}, $J_0(t,\xi,\vf;u(\cd))$, and $V_0(t,\xi,\vf)$, respectively.
		%
		%On the other hand, when the deterministic interval $[t,T]$ evolves to the stochastic interval $[\sigma, \tau]$ with the initial triple $(\sigma, \xi,\vf)\in\mathcal{T}[0, \tau)\ts L_{\sF_{\sigma}}^{2}(\Omega ; \mathbb{R}^{n})\ts L_{\sF_{\sigma}^\a}^{2}(\Omega ; \cS)$,
		%%
		%we denote the related problem, cost functional, and value function by
		%{\bf Problem (M-SLQ)$^\t$}, $J^\t(\sigma,\xi,\vf;u(\cd))$, and $V^\t(\sigma,\xi,\vf)$, respectively.
		%

		\begin{definition}\label{def-open} \rm
			Problem (M-SLQ) is said to be
			%\begin{enumerate}[~~\,\rm(i)]
			%
			({\it uniquely}) {\it open-loop solvable} for the initial triple $(t,\xi,\vf)\in\cD$ if there exists a (unique) $u^*(\cd)=u^*(\cd\ ;t,\xi,\vf)\in\cU[t,T]$ (depending on $(t,\xi,\vf)$) such that
			$$J(t,\xi,\vf;u^{*}(\cd))\les J(t,\xi,\vf;u(\cd)), \q  \hb{a.s.},\ \forall u(\cd)\in\cU[t,T],$$
			%
			%  Such a $u^*(\cd)$ is called an {\it open-loop optimal control} for $(t,x,i)$.
			%
			and said to be ({\it uniquely}) {\it open-loop solvable} if it is (uniquely) open-loop solvable for any initial triple $(t,\xi,\vf)\in\cD$.
		\end{definition}
		%where $G: \Omega \times\cS \rightarrow \mathbb{S}^{n}$ and  $(Q,R,S):\Omega \ts[0, T] \times\cS \rightarrow (\mathbb{S}^{n},\mathbb{S}^{m},\mathbb{R}^{m \times n})$ are called the  weighting coefficients.  % which satisfy the following condition:

		For the coefficients in the state equation \rf{state} and the weighting matrices in the cost functional
		\rf{cost}, we post the following assumption:
		\begin{itemize}
			\item [{\bf(H)}] For any choice of $i\in\cS$,
			$A(t, \omega, i)$, $C(t, \omega, i)\in L_{\dbF^W}^\i(0,T;\dbR^{n\times n})$,
			$B(t, \omega, i)$, $D(t, \omega, i)\in L_{\dbF^W}^\i(0,T;\dbR^{n\times m})$,
			$G(\o,i)\in L^\i_{\sF^W_T}(\O;\dbS^n)$, $Q(t, \omega, i)\in L^\i_{\dbF^W}(0,T;\dbS^n)$,
			$S(t, \omega, i)\in L^\i_{\dbF^W}(0,T;\dbR^{m\times n})$, and
			$R(t, \omega, i)\in L^\i_{\dbF^W}(0,T;\dbS^m)$.
		\end{itemize}
		Under the condition (\textbf{H}), for any initial triple $(t,\xi,\vf) \in \mathcal{D}$ and any control $u(\cd) \in \mathcal{U}[t, T]$, the classical theory of SDEs (see Lemma 2.1 of Wen--Li--Xiong \cite{Wen-Li-Xiong 2021}) implies that the equation \rf{state} has a unique solution $X(\cdot) \equiv X(\cdot ; t, \xi,\vf, u(\cd))$, which is square-integrable and whose path is continuous.
		%%
		%Moreover, there exists a positive constant $K$ such that
		%%
		%\begin{equation}\label{22.6.2.1}
		%\mathbb{E}\Big[\sup _{t \leqslant s \leqslant T}|X(s)|^{2}\Big] \leqslant K \mathbb{E}|\xi|^{2},
		%\end{equation}
		%%
		%and
		%%
		%\begin{equation}\label{22.6.2.2}
		%\begin{aligned}
		%		  &\mathbb{E}\Big[\sup_{t \leqslant s \leqslant T}|\check Y(s)|^{2}+\int_{t}^{T}|\check Z(s)|^{2} d s
		%		+\int_{t}^T
		%		\sum_{k,l=1}^D|\check \G_{kl}(s)|^2\l_{kl}(s)I_{\{\a(s-)=k\}} d s \Big] \\
		%&\leqslant K \mathbb{E}\Big[|X(T)|^{2}+\int_{t}^{T}|\varphi (s)|^{2} d s\Big],
		%\end{aligned}
		%\end{equation}
		%%
		%where $(\check Y(\cd),\check Z(\cd),\check \G(\cd))\in L_{\dbF}^{2}(\Omega ; C([t, T] ; \mathbb{R}^{n})) \times L_{\dbF}^{2}(t, T ; \mathbb{R}^{n})	\times L_{\dbF}^{2}(t, T ; \mathbb{M}_{D}(\mathbb{R}^{n\times 1}) )$ is the unique adapted solution of the following linear BSDE:
		%%
		%\begin{equation*}\label{21.9.6.7}
		%\left\{\begin{aligned}
			%\ds d\check Y(s) =& -\left[A(s,\a(s))^{\top}\check Y(s)+C(s,\a(s))^{\top} \check Z(s)+\varphi(s)\right] d s\\
			%\ns\ds & +\check Z(s)d W(s)+\check \G(s)\bullet d\wt{N}(s),\q s \in[t, T], \\
			%\ns\ds \check Y(T) =&\ G(\a(T)) X(T), \q~ \a(t)=\vartheta.
			%\end{aligned}\right.
			%\end{equation*}
			%%
			%We point out that in \autoref{21.9.6.5}, the coefficients $A$ and $C$ are allowed to be unbounded, which is a little different from the standard situation.
			%Here we omit the detailed  proof of \autoref{21.9.6.5}  since it is similar to that of Proposition 2.1 of Sun--Yong \cite{Sun-Yong-14}.
			%
			Moreover, to simplify our further analysis, we finally introduce the following BSDE: % over the time interval $[0,T]$:
			\begin{equation}\label{2.3}
				\left\{ \begin{aligned}
					\ds dM(s)=& -\big[M(s)A(s,\a(s))+A(s,\a(s))^\top M(s)+C(s,\a(s))^\top M(s)C(s,\a(s))+\Phi(s)C(s,\a(s))\\
					\ns\ds & +C(s,\a(s))^\top \Phi(s)+Q(s,\a(s))\big]ds
					+\Phi(s)dW(s)+\eta(s)\bullet d\wt{N}(s),\q  s\in[t,T],\\
					\ns\ds M(T)=&\ G(\a(T)),\q\a(t)=\vf.
				\end{aligned}\right.
			\end{equation}
			The solution of the above BSDE is denoted by the triple $(M(\cd),\Phi(\cd),\eta(\cd))$, where $\eta(\cdot)\deq (\eta_{kl}(\cdot))_{k,l\in\mathcal S} \in \cM_D(\mathbb{R}^{n\times n})$. % with $\eta_{kl}(s)\in \mathbb{R}^{n\times n}$.
			Note that the terminal value $G(\cd)$ is bounded, so the classical theory of BSDEs combining It\^o's formula with jumps deduces the following result, i.e., $M(\cd)$ is bounded too.
			
			\begin{proposition}\label{21.8.19.1}\sl
				Under the condition (\textbf{H}), the process $M(\cd)$ is bounded, where $(M(\cd),\Phi(\cd),\eta(\cd))$ is the adapted solution of BSDE \rf{2.3}.
			\end{proposition}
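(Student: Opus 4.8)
\noindent The plan is to obtain a Feynman--Kac type representation of $M(\cd)$ by conjugating the backward equation \rf{2.3} with an auxiliary homogeneous linear SDE, and to read the bound off from that representation. Fix a deterministic pair $(s,x)\in[t,T]\ts\dbR^n$, and let $X^{s,x}(\cd)$ be the solution on $[s,T]$ of
$$dX^{s,x}(r)=A(r,\a(r))X^{s,x}(r)\,dr+C(r,\a(r))X^{s,x}(r)\,dW(r),\qq X^{s,x}(s)=x.$$
Under (\textbf{H}) this SDE has a unique solution with finite moments of all orders, and applying It\^o's formula to $|X^{s,x}(\cd)|^2$ together with Gronwall's inequality yields $\dbE_s\big[|X^{s,x}(r)|^2\big]\les e^{K(r-s)}|x|^2\les e^{KT}|x|^2$ for every $r\in[s,T]$, where the constant $K$ depends only on the uniform bounds over $\cS$ of $A$ and $C$ supplied by (\textbf{H}); here $\dbE_s[\cd]\deq\dbE[\cd\,|\,\sF_s]$.

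\noindent Next I would apply It\^o's formula with jumps to the scalar process $r\mapsto\lan M(r)X^{s,x}(r),X^{s,x}(r)\ran$ on $[s,T]$. Since $X^{s,x}(\cd)$ is driven only by $W$, it is continuous and has no common jumps with the $\wt N$-part of $M(\cd)$; moreover the drift generated by the $A$-, $C$- and $\Phi$-terms of \rf{2.3} cancels identically against the drift and the It\^o correction produced by $X^{s,x}(\cd)$, so that what remains is
$$d\lan M(r)X^{s,x}(r),X^{s,x}(r)\ran=-\lan Q(r,\a(r))X^{s,x}(r),X^{s,x}(r)\ran\,dr+d\mathfrak{m}(r),$$
with $\mathfrak{m}(\cd)$ a local martingale, namely the sum of a $dW$-stochastic integral and an $\eta\bullet d\wt N$-stochastic integral. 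Choosing stopping times $\t_N\ua T$ that in addition keep $|X^{s,x}(\cd)|$ bounded, integrating from $s$ to $T\wedge\t_N$, conditioning on $\sF_s$ (which annihilates the stopped martingale), and letting $N\to\i$ with dominated convergence gives the representation
$$\lan M(s)x,x\ran=\dbE_s\left[\lan G(\a(T))X^{s,x}(T),X^{s,x}(T)\ran+\int_s^T\lan Q(r,\a(r))X^{s,x}(r),X^{s,x}(r)\ran\,dr\right].$$

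\noindent Finally, since $\cS$ is finite, (\textbf{H}) produces a constant $c>0$ with $|G(\o,i)|\les c$ and $|Q(r,\o,i)|\les c$ for all $i\in\cS$, $r\in[0,T]$ and a.e.\ $\o$; combining this with the moment estimate of the first step gives $|\lan M(s)x,x\ran|\les c(1+T)e^{KT}|x|^2$ almost surely, for every deterministic $x\in\dbR^n$. Running $x$ over a countable dense subset of the unit sphere of $\dbR^n$ and using that $x\mapsto\lan M(s,\o)x,x\ran$ is continuous and $M(s)\in\dbS^n$ (so that $\|M(s,\o)\|=\sup_{|x|=1}|\lan M(s,\o)x,x\ran|$), one gets $\|M(s,\o)\|\les c(1+T)e^{KT}$ almost surely for each fixed $s\in[t,T]$; Fubini's theorem then upgrades this to $\|M(s,\o)\|\les c(1+T)e^{KT}$ for a.e.\ $(s,\o)\in[t,T]\ts\O$, i.e.\ $M(\cd)\in L^\i_\dbF(t,T;\dbS^n)$. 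The one genuinely delicate point is the localization in the middle step that legitimizes passing to the conditional expectation: $X^{s,x}(\cd)$ has moments of every order while $\Phi(\cd)$ and $\eta(\cd)$ are only square-integrable, so a priori $\mathfrak{m}(\cd)$ is merely a local martingale and the representation must be obtained through the stopping-time approximation described above; the remaining computations are routine.
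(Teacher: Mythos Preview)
The paper does not actually give a proof of this proposition; it simply states in one sentence that ``the classical theory of BSDEs combining It\^o's formula with jumps'' yields the result. Your argument is precisely the standard way to make that sentence rigorous: the Feynman--Kac representation obtained by pairing the BSDE with the homogeneous linear SDE $dX=AX\,dr+CX\,dW$. The drift cancellation you claim is correct (the $A$-, $C^\top MC$-, and $\Phi C$-terms indeed cancel against the It\^o corrections, leaving only $-\langle QX,X\rangle$), and the resulting representation gives the bound immediately.

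The localization step you flag is the only place that requires care, and you handle it properly. One small point worth making explicit for the limit $N\to\infty$: to dominate $\langle M(T\wedge\t_N)X^{s,x}(T\wedge\t_N),X^{s,x}(T\wedge\t_N)\rangle$ you need an integrable majorant \emph{before} you know $M$ is bounded. This is available because standard BSDE well-posedness already gives $\dbE\big[\sup_r|M(r)|^2\big]<\infty$, while the bounded coefficients of the auxiliary SDE give $\dbE\big[\sup_r|X^{s,x}(r)|^4\big]<\infty$; Cauchy--Schwarz then furnishes the domination. With that remark your proof is complete and is exactly what the paper's one-line appeal to ``classical theory'' has in mind.
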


			\vspace{-0.5cm}
			
			\section{The main results}\label{MainResult}
			
			In this section, we state our main results, such as the solvability of the corresponding SRE, the closed-loop representation of the open-loop optimal control, and some auxiliary results.
			
			\subsection{Solvability of stochastic Riccati equation}
			As shown in Sun--Xiong--Yong \cite{Sun-Xiong-Yong-21}, the solvability of the corresponding SRE deduces the closed-loop representation of the open-loop optimal control, which is important to deal with Problem (M-SLQ).
			Therefore, the SRE plays a crucial role in studying the SLQ problem, and our core goal is to establish the solvability of the corresponding SRE.
			For this, we introduce the following SRE:
			%								%								%
			\begin{equation}\label{21.8.23.6}
				\left\{\begin{aligned}
					\ds d\hat P(s,\a(s))&=-\big[\hat{Q}(s,\a(s))
					+\hat{S}(s, \a(s))^\top \Th(s,\a(s))\big] ds\\
					\ns\ds &\q +\hat \Lambda(s) d W(s) +\hat \zeta(s)\bullet d\wt{N}(s), \q s \in[0, T], \\
					\ns\ds \hat P(T,\a(T))&=G(\a(T)), \q \a(0)=i_0,
				\end{aligned}\right.
			\end{equation}
			where $i_0\in\cS$ is the initial state of $\a(\cd)$, and for any $(s,\o,i)\in[0,T]\ts\Omega\ts\cS$,
			\begin{equation}\label{hatsq}
				\begin{aligned}
					\ds \hat{Q}(s,i)&\deq \hat P(s,i) A(s,i)+A(s,i)^{\top} \hat P(s,i)
					+C(s,i)^{\top} \hat P(s,i) C(s,i) \\
					\ns\ds&\q\ +\hat \Lambda(s) C(s,i)+C(s,i)^{\top} \hat \Lambda(s)+Q(s,i),\\
					\ns\ds \hat{S}(s,i)&\deq B(s,i)^{\top} \hat P(s,i)
					+D(s,i)^{\top} \hat P(s,i) C(s,i)+D(s,i)^{\top}
					\hat \Lambda(s)+S(s,i), \\
					\ns\ds \hat{R}(s,i)&\deq R(s,i)+D(s,i)^{\top} \hat P(s,i) D(s,i),
					\q\
					\Theta(s,i)\deq-\hat{R}(s,i)^{-1}\hat{S}(s,i).
				\end{aligned}
			\end{equation}
			%
			%
			%Note that $\a(0)=i_0$ with $i_0\in\cS$.
			%
			Suppose that the cost functional $J(0,0,i_0; u(\cd))$ is uniformly convex in $u(\cd)$, i.e., for any choice of $i_0\in\cS$, there is a positive constant $\e$ such that
			\begin{align}\label{21.8.21.3.2}
				J(0,0,i_0;u(\cd))\geqslant\e \mathbb{E}\int_{0}^{T}|u(s)|^{2}ds, \q \forall u(\cd) \in \mathcal{U}[0, T].
			\end{align}
			%
			%								\begin{equation*}
				%									\begin{aligned}
					%										\ds \Theta(s,\a(s))&=
					%										\big[R(s,\a(s))+D(s,\a(s))^{\top} P(s,\a(s)) D(s,\a(s))\big]^{-1}
					%										%
					%										\cdot
					%										\big[ B(s,\a(s))^{\top} P(s,\a(s))\\
					%										%
					%										&\q+D(s,\a(s))^{\top} P(s,\a(s)) C(s,\a(s))
					%										%
					%										+D(s,\a(s))^{\top} \Lambda(s)+S(s,\a(s))\big],
					%										%
					%										\quad \text { a.e. on }[0, T], \text { a.s. }
					%										%
					%									\end{aligned}
				%								\end{equation*}
			%								%
			%	In addition, we will establish the closed-loop representation of open-loop optimal controls.
			
			%
			\begin{theorem}\label{21.9.3.1} \sl
				Under conditions {\rm (\textbf{H})} and \rf{21.8.21.3.2},
				SRE \rf{21.8.23.6} admits a unique adapted solution $(\hat P(\cd,\a(\cd)), \hat \Lambda(\cd),\hat \zeta(\cd)) \in$ $L_{\mathbb{F}}^{\infty}(0, T ; \mathbb{S}^{n}) \times L_{\mathbb{F}}^{2}(0, T ; \mathbb{S}^{n})\times L_{\dbF}^2(0,T;\mathbb{M}_{D}(\mathbb{S}^n))$. In addition,
				%
				%\begin{equation}\label{21.9.11.2}
				%\tc{\hat{R}(s,i)=R(s,i)+D(s,i)^{\top} P(s,i) D(s,i)} \geqslant \lambda I_{m},\q  \text {a.e. on }[0, T]\ts\cS \text {, a.s.}.
				%\end{equation}
				%%
				\begin{equation}
					\hat{R}(s,\a(s))=R(s,\a(s))+D(s,\a(s))^\top \hat P(s,\a(s))D(s,\a(s))
					\ges\e I_m,\q \hb{a.e. on $[0,T]$, a.s.}.
				\end{equation}
				%
				%where $\e$ is the positive constant such that
				%%
				%
				%	for some constant $\lambda>0 .$
			\end{theorem}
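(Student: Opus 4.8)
The strategy is to read off SRE \rf{21.8.23.6} from the optimality system of Problem (M-SLQ), following the route of Sun--Xiong--Yong \cite{Sun-Xiong-Yong-21} while keeping track of the extra terms produced by the Markovian regime switching. By the representation of the cost functional in \autoref{Theorem3.4}, the uniform convexity \rf{21.8.21.3.2} makes $J_0(0,0,i_0;\cd)$ a uniformly convex quadratic functional, so Problem (M-SLQ) is uniquely open-loop solvable for every initial triple, and the optimal flow issued from an invertible matrix initial datum stays invertible, a.s. For $j=1,\dots,n$ I would solve the FBSDE consisting of the optimal state equation and its adjoint equation with initial triple $(0,e_j,i_0)$, obtain $(X_j,Y_j,Z_j,\G^j)$, and assemble $\BX=(X_1,\dots,X_n)$, $\BY=(Y_1,\dots,Y_n)$, $\BZ=(Z_1,\dots,Z_n)$ together with the fourth component $\boldsymbol{\G}$ of \eqref{22.6.25.2} --- the delicate construction flagged in the introduction, which is \emph{not} the columnwise juxtaposition of the $\G^j$ --- so that $\BX(s)$ is invertible on $[0,T]$, a.s. By \autoref{21.8.21.5} and \autoref{21.8.22.4} there is a bounded, left-continuous, $\dbS^n$-valued, $\dbF$-adapted field with $V(t,\xi,\a(t))=\lan P(t,\a(t))\xi,\xi\ran$; the core step is the identity $P(s,\a(s))=\BY(s)\BX(s)^{-1}$, a.e.\ on $[0,T]$, a.s., which I would establish by first proving $Y(s)=P(s,\a(s))X(s)$ for the open-loop optimal pair $(X,u)$ of an \emph{arbitrary} initial triple --- identifying $Y(t)$ with the gradient of the value flow at $\xi$ and using time-consistency (\autoref{Corollary4.5}) --- and then specializing to the basis data. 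One then sets $\hat P(s,\a(s)):=\BY(s)\BX(s)^{-1}=P(s,\a(s))$, which is bounded and $\dbS^n$-valued.

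Next I would prove $\hat R(s,\a(s))=R(s,\a(s))+D(s,\a(s))^\top\hat P(s,\a(s))D(s,\a(s))\ges\e I_m$, a.e.\ on $[0,T]$, a.s. This is obtained from \rf{21.8.21.3.2}: the uniform convexity transports to the sub-problems on $[t,T]$ via time-consistency (\autoref{Corollary4.5}), and, combined with the completion-of-squares identity attached to $\hat P$ and a localization in time around a Lebesgue point $(s_0,\o_0)$ (along which the state issued from $(0,0,i_0)$ is negligible), it degenerates into $\lan\hat R(s_0,\o_0)v,v\ran\ges\e|v|^2$ for all $v\in\dbR^m$. In particular $\Th(s,i)=-\hat R(s,i)^{-1}\hat S(s,i)$ is well defined.

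To derive the equation itself, I would apply It\^o's formula with jumps to the product $\BY(s)\BX(s)^{-1}$. Since the state equation carries no jumps, $\BX(\cd)$ has continuous paths and $\BX^{-1}$ is a continuous semimartingale, so the only jumps of the product come from the $\boldsymbol{\G}\bullet d\wt N$ term of $\BY$. Matching the $dW$-part reads off $\hat\Lambda$; matching the jump part reads off $\hat\z$, with $\hat\z_{kl}$ expressed through the $\boldsymbol{\G}$-component of $\BY$ and, equivalently, through the differences $\hat P(\cd,l)-\hat P(\cd,k)$, hence automatically $\dbS^n$-valued by the symmetry of $\hat P$; and matching the finite-variation part, after substituting the stationarity relation of the adjoint equation --- which forces the optimal controls to be $\BU(s)=\Th(s,\a(s))\BX(s)$ and the Brownian integrands to be $\BZ(s)=[\hat\Lambda(s)+\hat P(s,\a(s))C(s,\a(s))]\BX(s)+\hat P(s,\a(s))D(s,\a(s))\BU(s)$ --- collapses exactly to the drift $-[\hat Q(s,\a(s))+\hat S(s,\a(s))^\top\Th(s,\a(s))]$, i.e.\ \rf{21.8.23.6}. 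Boundedness of $\hat P$ together with the standard a priori estimates for this BSDE then give $\hat\Lambda\in L^2_\dbF(0,T;\dbS^n)$ and $\hat\z\in L^2_\dbF(0,T;\dbM_D(\dbS^n))$.

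For uniqueness, given any adapted solution $(\wt P,\wt\Lambda,\wt\z)$ with $\wt P$ bounded, applying It\^o's formula with jumps to $\lan\wt P(s,\a(s))X(s),X(s)\ran$ along an arbitrary controlled state and completing the square yields $J(t,\xi,\a(t);u)=\lan\wt P(t,\a(t))\xi,\xi\ran+\dbE_t\int_t^T\lan\wt R(s,\a(s))(u-\wt\Th X),u-\wt\Th X\ran ds$; minimizing over $u$ and comparing with the value flow forces $\wt P(t,\a(t))=P(t,\a(t))=\hat P(t,\a(t))$ (polarizing over the basis initial data), whence $\wt\Lambda=\hat\Lambda$ and $\wt\z=\hat\z$ by uniqueness of the semimartingale decomposition and of the martingale representation against $W$ and $\wt N$. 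I expect the main obstacle to be the identification $\hat P(s,\a(s))=\BY(s)\BX(s)^{-1}$ --- which is \emph{not} automatic, since, as observed in the introduction, $\BY\BX^{-1}$ need not a priori coincide with $P(s,\a(s))$ --- together with the bookkeeping in It\^o's formula with jumps through the matrix product: one must correctly attribute the regime-switching contribution to $\hat\z$, keep it $\dbS^n$-valued, and verify that the superfluous $\boldsymbol{\G}$- and $\BX^{-1}$-dependent terms in the drift cancel against the stationarity relation. Once \autoref{21.8.21.5} and \autoref{21.8.22.4} are available, the positivity of $\hat R$ and the uniqueness are comparatively routine.
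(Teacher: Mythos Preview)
Your overall route matches the paper's: build $\BX,\BY,\BZ,\boldsymbol{\G}$ from the optimal FBSDEs with basis initial data, set $\hat P=\BY\BX^{-1}=P$, differentiate by It\^o with jumps to read off \rf{21.8.23.6}, and verify $\hat R\ges\e I_m$. However, two steps that you treat as routine are precisely where the work lies, and as written your proposal has genuine gaps there.

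\textbf{Invertibility of $\BX$.} You write that unique solvability implies ``the optimal flow issued from an invertible matrix initial datum stays invertible, a.s.'' This is not a consequence of unique open-loop solvability; it is a separate result (the paper's Lemma corresponding to \autoref{21.9.2.1}) requiring a nontrivial contradiction argument. One introduces the first time $\theta$ at which $\det\BX$ vanishes, picks a positive semidefinite $H$ in the kernel of $\BX(\theta)$ on $\{\theta\le T\}$, and compares the stopped problem (M-SLQ)$^\tau$ with the auxiliary problem obtained by adding $\langle HX(\tau),X(\tau)\rangle$ to the cost. Because $H\BX(\tau)=0$, both stopped problems share the same optimal pair, hence the same value-flow kernel $P(\cd,\a(\cd))$ on $[0,\tau)$; but left-continuity of $P$ then forces $H=0$, a contradiction. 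Without this, the very definition $\hat P=\BY\BX^{-1}$ collapses.

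\textbf{Square-integrability of $\hat\Lambda$ and $\hat\zeta$.} You appeal to ``standard a priori estimates for this BSDE'', but the drift of \rf{21.8.23.6} contains $\hat S^\top\hat R^{-1}\hat S$, which is \emph{quadratic} in $\hat\Lambda$ through the term $D^\top\hat\Lambda$ inside $\hat S$. Standard Lipschitz BSDE estimates therefore do not apply. The paper closes this by a trace argument exploiting the sign of $\hat\Psi:=\hat S^\top\hat R^{-1}\hat S\ges0$ and the boundedness of $P$: one localizes with stopping times $\lambda_m$, uses $\tr[-P\hat\Psi]\le K\,\tr[\hat\Psi]$ together with the first-order equation to bound $\dbE\int_0^{t\wedge\lambda_m}\tr[\hat\Psi]\,ds$ by $K(1+\dbE\int_0^{t\wedge\lambda_m}|\Lambda|\,ds)$, and then It\^o on $P^2$ plus Cauchy--Schwarz yields the $L^2$ bound for $\Lambda$ and $\zeta$ uniformly in $m$. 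You should not expect this to come for free.

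Two smaller points. Your path to $\hat R\ges\e I_m$ via a ``Lebesgue point'' localization is vaguer than what is actually needed; the paper first proves $\hat R\ges0$ via a completion-of-squares identity on the stopped problem, and then upgrades to $\hat R\ges\e I_m$ by comparing with the perturbed cost having $R$ replaced by $R-\epsilon I_m$, whose value kernel $P_\epsilon\le P$. Finally, your assertion that $\hat\zeta_{kl}$ equals $\hat P(\cd,l)-\hat P(\cd,k)$ is plausible but is \emph{not} what falls out of It\^o on $\BY\BX^{-1}$: the jump term is $\zeta_{kl}=\boldsymbol{\G}_{kl}\BX^{-1}$, and identifying this with the state-difference of $P$ would require an extra argument you have not supplied (and that the paper does not make). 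Your uniqueness argument by completing the square is fine and is essentially equivalent to the paper's identification of any solution with the value-flow kernel $P$.
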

			The above theorem shows the solvability of SRE \rf{21.8.23.6}, based on which the following closed-loop representation of the open-loop optimal control can be derived.
			\begin{theorem}\label{21.11.26.1} \sl
				Under conditions {\rm (\textbf{H})} and \rf{21.8.21.3.2},
				Problem (M-SLQ) is uniquely open-loop solvable and the unique open-loop optimal control $\left\{u^*(s) \right\}_{s\in[t,T]}$ with respect to the initial triple $(t, \xi,\vf) \in \cD$ has the following linear state feedback:
				\begin{equation}\label{22.6.7.1}
					u^{*}(s)=\Theta(s,\a(s)) X^{*}(s), \q s \in[t, T],
				\end{equation}
				where $\Theta(\cd)$ is defined in \rf{hatsq}
				%							%
				and $\{X^{*}(s)\}_{s\in[t, T]}$ is the solution of the following closed-loop system:
				\begin{equation*}
					\left\{\begin{aligned}
						\ds d X^{*}(s)=& \big[A(s,\a(s))+B(s,\a(s)) \Theta(s,\a(s))\big] X^{*}(s) d s\\
						%									%
						\ns\ds & +\big[C(s,\a(s))+D(s,\a(s)) \Theta(s,\a(s))\big] X^{*}(s) d W(s),\q s \in[t, T], \\
						%									%
						\ns\ds X^{*}(t) =&\ \xi,\q\a(t)=\vf.
					\end{aligned}\right.
				\end{equation*}
			\end{theorem}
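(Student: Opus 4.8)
The plan is to combine the solvability of the stochastic Riccati equation \rf{21.8.23.6} furnished by \autoref{21.9.3.1} with a verification argument of completion-of-squares type. Fix an initial triple $(t,\xi,\vf)\in\cD$ and an arbitrary control $u(\cd)\in\cU[t,T]$, with associated state process $X(\cd)=X(\cd\,;t,\xi,\vf,u(\cd))$. The first step is to apply It\^o's formula with jumps to $s\mapsto\langle\hat P(s,\a(s))X(s),X(s)\rangle$ on $[t,T]$: the drift contributed by $d\hat P(\cd,\a(\cd))$ (read off \rf{21.8.23.6}), the drift and diffusion of $X(\cd)$ from \rf{state}, and the quadratic-covariation terms coming both from the Brownian part and from the jumps $\wt N_{kl}(\cd)$ are organized so that, after taking $\dbE_t[\cd]$, the terms that do not involve $u$ reproduce exactly the weighting form in \rf{cost}; the algebraic definitions \rf{hatsq} of $\hat Q,\hat S,\hat R$ are precisely what makes the $u$-linear terms complete a square. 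This should give the identity
\begin{equation*}
J(t,\xi,\vf;u(\cd))=\langle\hat P(t,\vf)\xi,\xi\rangle
+\dbE_t\int_t^T\big\langle\hat R(s,\a(s))\big(u(s)-\Theta(s,\a(s))X(s)\big),\,u(s)-\Theta(s,\a(s))X(s)\big\rangle\,ds.
\end{equation*}

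By \autoref{21.9.3.1} we have $\hat R(s,\a(s))\ges\e I_m$ a.e. on $[0,T]$, a.s., so the integral above is nonnegative; hence $J(t,\xi,\vf;u(\cd))\ges\langle\hat P(t,\vf)\xi,\xi\rangle$ a.s. for every $u(\cd)\in\cU[t,T]$, and equality holds iff $u(s)=\Theta(s,\a(s))X(s)$ for a.e. $s$, a.s. Consequently, once the closed-loop system in the statement is shown to possess an $\dbF$-adapted, square-integrable solution $X^*(\cd)$, the feedback control $u^*(\cd)\deq\Theta(\cd,\a(\cd))X^*(\cd)$ lies in $\cU[t,T]$, attains the lower bound, and is therefore open-loop optimal with $V(t,\xi,\vf)=\langle\hat P(t,\vf)\xi,\xi\rangle$. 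Uniqueness then comes from \rf{21.8.21.3.2}: any other open-loop optimal control $\bar u(\cd)$ with state $\bar X(\cd)$ must satisfy $\bar u(s)=\Theta(s,\a(s))\bar X(s)$, so $\bar X(\cd)$ solves the same closed-loop system, and uniform convexity forces $\bar u(\cd)=u^*(\cd)$, whence the closed-loop solution is unique as well.

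The hard part will be the well-posedness of the closed-loop system itself: since $\hat S(\cd,i)$ involves the merely square-integrable process $\hat\Lambda(\cd)$, the gain $\Theta(\cd,\a(\cd))=-\hat R(\cd,\a(\cd))^{-1}\hat S(\cd,\a(\cd))$ is only in $L^2_\dbF(0,T;\dbR^{m\times n})$, so the coefficients $A+B\Theta$, $C+D\Theta$ of the closed-loop SDE are unbounded and the classical linear-SDE theory does not apply directly. I would resolve this by a truncation scheme — solve the closed-loop system with $\Theta_\ell\deq\Theta\,\mathbf{1}_{\{|\Theta|\les\ell\}}$, use the completion-of-squares identity applied to the control $\Theta_\ell(\cd,\a(\cd))X_\ell(\cd)$ together with $\hat R\ges\e I_m$ to get a uniform $L^2$-estimate on $X_\ell(\cd)$, and then pass to the limit in the SDE — or, alternatively, bypass it by starting from the open-loop optimal control $u^*(\cd)$ (whose existence under the uniform convexity \rf{21.8.21.3.2} is available), applying It\^o's formula with jumps to $\hat P(s,\a(s))X^*(s)$, identifying this process together with its martingale parts with the adjoint processes of the optimality system by uniqueness of adapted solutions to the linear adjoint BSDE, and substituting into the stationarity condition of the stochastic maximum principle to collapse it to $\hat R(\cd,\a(\cd))u^*(\cd)+\hat S(\cd,\a(\cd))X^*(\cd)=0$, i.e. $u^*=\Theta(\cd,\a(\cd))X^*$; the state $X^*$ then automatically satisfies the closed-loop system. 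The remaining integrability bookkeeping needed to legitimize every use of It\^o's formula with jumps and every conditional-expectation interchange is routine under (\textbf{H}) and the bounds of \autoref{21.9.3.1}.
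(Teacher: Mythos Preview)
Your argument is correct but follows a genuinely different route from the paper. The paper's proof is short because it rests on the machinery already built: by \autoref{21.8.21.4} the problem is uniquely solvable; the optimal pairs $(X_j(\cd),u_j(\cd))$ for initial data $(0,e_j,i_0)$ are stacked into the matrix processes $\boldsymbol{X}(\cd),\boldsymbol{U}(\cd)$; \autoref{21.9.2.1} shows $\boldsymbol{X}(\cd)$ is invertible; \autoref{21.9.8.2} then \emph{defines} $\Theta(\cd,\a(\cd))=\boldsymbol{U}(\cd)\boldsymbol{X}(\cd)^{-1}$ and shows that $(P,\Lambda,\zeta)$ with this $\Theta$ satisfies the SRE, so the proof of \autoref{21.9.3.1} identifies this $\Theta$ with $-\hat R^{-1}\hat S$. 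The feedback \rf{22.6.7.1} is then immediate from \autoref{21.8.21.2}: $u^*(\cd)=\boldsymbol{U}(\cd)\xi=\Theta(\cd,\a(\cd))\boldsymbol{X}(\cd)\xi=\Theta(\cd,\a(\cd))X^*(\cd)$, and the closed-loop SDE never needs an independent well-posedness argument because $X^*(\cd)=\boldsymbol{X}(\cd)\xi$ is already in hand as the optimal state. Your completion-of-squares verification is the classical direct route and is cleaner if one treats \autoref{21.9.3.1} as a black box; it also delivers $V(t,\xi,\vf)=\langle\hat P(t,\vf)\xi,\xi\rangle$ without invoking \autoref{21.8.21.5}. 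The price is exactly the well-posedness issue you flag; your second workaround (start from the known open-loop optimal pair, apply It\^o with jumps to $\hat P(\cd,\a(\cd))X^*(\cd)$, identify with the adjoint via BSDE uniqueness, and collapse the stationarity condition \rf{4.2} to $\hat R u^*+\hat S X^*=0$) is essentially the paper's strategy recast for a single initial condition rather than the matrix FBSDE \rf{22.6.25.2}. One point you should spell out rather than call routine: since $\Theta$ and $\hat\Lambda$ are only in $L^2_\dbF$, the It\^o/conditional-expectation step needs localization; the paper handles the analogous computation in Steps~1--2 of the proof of \autoref{21.9.8.3} via the stopping times $\tau_k=\inf\{t:\int_0^t(|\Theta(s,\a(s))|^2+|\hat\Lambda(s)|^2)\,ds\ges k\}\wedge T$, and you should do the same.
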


			\begin{remark}\rm
				Sun--Xiong--Yong \cite{Sun-Xiong-Yong-21} studied SLQ optimal control problems with random coefficients, proved the solvability of the corresponding SRE, and obtained the closed-loop representation of the open-loop optimal control. Compared with \cite{Sun-Xiong-Yong-21}, on one hand, the above theorems further extend the results in \cite{Sun-Xiong-Yong-21} to the framework within the Markovian regime switching, which is important in continuous-time mean-variance portfolio selection problems (see \autoref{Example}).
				On the other hand, due to the presence of the Markovian regime switching jump in SRE \rf{21.8.23.6}, it is difficult to directly prove the solvability of SRE \rf{21.8.23.6}. To overcome it, we derive some auxiliary results first.
			\end{remark}

			\subsection{Some auxiliary results}\label{Sec3.2}
			
			For simplify the notations, from now on, for any $(s,\o,i)\in[0,T]\ts\Omega\ts\cS$, $x,y,z\in\dbR^n$ and $u\in\dbR^m$, let
			\begin{align*}
				\ds F(s,i,x,y,z,u)&\deq B(s,i)^\top y+D(s,i)^\top z+S(s,i)x+R(s,i)u,\\
				\ns\ds	\widetilde{F}(s,i,x,y,z,u)&\deq A(s,i)^\top y+C(s,i)^\top z+Q(s,i)x+S(s,i)^\top u,\\
				\ns\ds F_0(s,i,x,y, z)\deq\ & F(s,i,x,y,z,0), \q\widetilde{F}_0(s,i,x,y, z)\deq\widetilde{F}(s,i,x,y,z,0).
			\end{align*}
			The following BSDE is called the associated adjoint equation of the state equation \rf{state}:
			\bel{21.8.20.1}\left\{\ba{ll}
			\ds dY(s)=-\widetilde F(s,\a(s),X(s), Y(s), Z(s), u(s))ds+Z(s)dW(s)+\G(s)\bullet d\widetilde{N}(s),\q  s\in[t,T],\\
			\ns\ds Y(T)=G(\a(T))X(T),\q \a(t)=\vf,
			\ea\right.\ee
			where $(X(\cd),u(\cd))$ comes from \rf{state}.
			In order to prove the solvability of SRE \rf{21.8.23.6}, we present an alternative characterization of the solvability of Problem (M-SLQ)$_0$ in terms of the state equation \rf{state} and the adjoint equation \rf{21.8.20.1}, and then show that Problems (M-SLQ) and (M-SLQ)$_0$ are equivalent.
			\begin{theorem}\label{Theorem4.1} \sl
				Let {\rm (\textbf{H})} hold and let the initial triple $(t,\xi,\vf)\in\cD$ be given.
				A process $u^*(\cd)\in\cU[t,T]$ is an open-loop optimal control of Problem (M-SLQ)$_0$ with respect to $(t,\xi,\vf)$ if and only if the following two conditions hold:
				\begin{itemize}
					\item [(i)] The mapping $u(\cd)\mapsto J_0(t,0,\vf;u(\cd))$ is convex, or equivalently,
					\bel{21.9.15.2}J_0(t,0,\vf;u(\cd))\ges0,\q \forall u(\cd)\in\cU[t,T].\ee
					\item [(ii)] The following stationarity condition hold:
					\begin{align}\label{4.2}
						F(s,\a(s),X^*(s),  Y^*(s), Z^*(s), u^*(s))=%B(s,\a(s))^\top Y(s)+D(s,\a(s))^\top Z(s)+S(s,\a(s))X(s)+R(s,\a(s))u^*(s)=
						0,\q \hb{a.e. $s\in[t,T]$, a.s.,}
					\end{align}
					where $X^*(\cd)$ is the solution of SDE \rf{state} and $(Y^*(\cd),Z^*(\cd),\G^*(\cd))$ is the solution of BSDE \rf{21.8.20.1} with $u(\cd)$ replaced by $u^*(\cd)$.
				\end{itemize}
			\end{theorem}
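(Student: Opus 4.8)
My plan is to exploit the affine-plus-quadratic structure of the map $u(\cd)\mapsto J_0(t,\xi,\vf;u(\cd))$ and to identify its first variation at $u^*(\cd)$ by pairing a state perturbation against the adjoint triple via It\^o's formula with jumps.

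\emph{Step 1: perturbation and expansion.}
Fix $v(\cd)\in\cU[t,T]$ and $\e\in\dbR$. Since \rf{state} is linear in $(\xi,u(\cd))$, the state process for the control $u^*(\cd)+\e v(\cd)$ with initial triple $(t,\xi,\vf)$ equals $X^*(\cd)+\e X^0(\cd)$, where $X^0(\cd)$ solves \rf{state} for the control $v(\cd)$ with zero initial state $(t,0,\vf)$. Plugging this into \rf{cost} (with $\dbE_t$ replaced by $\dbE$) and using the symmetry of $G(\a(T))$ and of the weighting coefficient matrix, I would obtain the exact identity
\begin{equation*}
J_0(t,\xi,\vf;u^*(\cd)+\e v(\cd))=J_0(t,\xi,\vf;u^*(\cd))+2\e\,\mathscr{L}(v)+\e^2 J_0(t,0,\vf;v(\cd)),
\end{equation*}
in which the $\e^2$-coefficient is $J_0(t,0,\vf;v(\cd))$ because $u(\cd)\mapsto X^0(\cd)$ is linear, and the cross term is
\begin{equation*}
\mathscr{L}(v)\deq\dbE\left[\lan G(\a(T))X^*(T),X^0(T)\ran+\int_t^T\big(\lan QX^*+S^\top u^*,X^0\ran+\lan SX^*+Ru^*,v\ran\big)ds\right],
\end{equation*}
with the arguments $(s,\a(s))$ of the coefficients suppressed.

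\emph{Step 2: identification of the cross term.}
I would then apply It\^o's formula with jumps to $s\mapsto\lan Y^*(s),X^0(s)\ran$ on $[t,T]$, where $(Y^*(\cd),Z^*(\cd),\G^*(\cd))$ is the solution of \rf{21.8.20.1} driven by $u^*(\cd)$. Since \rf{state} carries no $d\wt{N}$-term, $X^0(\cd)$ has continuous paths, so the $\G^*(\cd)\bullet d\wt{N}(\cd)$ part of $Y^*(\cd)$ contributes nothing to the covariation with $X^0(\cd)$, while the $dW$- and $d\wt{N}$-stochastic integrals are genuine martingales thanks to the $L^2$-bounds from {\rm(\textbf{H})}. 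Using $X^0(t)=0$, $Y^*(T)=G(\a(T))X^*(T)$, and the definition of $\wt{F}$, taking expectations leaves
\begin{equation*}
\dbE\,\lan G(\a(T))X^*(T),X^0(T)\ran=\dbE\int_t^T\big(\lan B^\top Y^*+D^\top Z^*,v\ran-\lan QX^*+S^\top u^*,X^0\ran\big)ds,
\end{equation*}
and substituting this into the expression for $\mathscr{L}(v)$ cancels the $X^0$-terms and yields, by the very definition of $F$,
\begin{equation*}
\mathscr{L}(v)=\dbE\int_t^T\lan F(s,\a(s),X^*(s),Y^*(s),Z^*(s),u^*(s)),v(s)\ran ds.
\end{equation*}

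\emph{Step 3: the two implications.}
For sufficiency, assuming (i) and (ii), I take any $u(\cd)\in\cU[t,T]$ and apply the expansion of Step 1 together with Step 2 with $v(\cd)=u(\cd)-u^*(\cd)$ and $\e=1$: the linear term vanishes by \rf{4.2}, the quadratic term is nonnegative by \rf{21.9.15.2}, hence $J_0(t,\xi,\vf;u(\cd))\ges J_0(t,\xi,\vf;u^*(\cd))$. For necessity, assuming $u^*(\cd)$ optimal, I use $J_0(t,\xi,\vf;u^*(\cd)+\e v(\cd))\ges J_0(t,\xi,\vf;u^*(\cd))$, i.e.\ $2\e\mathscr{L}(v)+\e^2 J_0(t,0,\vf;v(\cd))\ges0$ for all $\e\in\dbR$ and $v(\cd)\in\cU[t,T]$. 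Replacing $v(\cd)$ by $-v(\cd)$ (which leaves $J_0(t,0,\vf;v(\cd))$ unchanged by degree-$2$ homogeneity) and adding the two inequalities gives $J_0(t,0,\vf;v(\cd))\ges0$, i.e.\ \rf{21.9.15.2}; since $u(\cd)\mapsto J_0(t,0,\vf;u(\cd))$ is a homogeneous quadratic functional, this is equivalent to its convexity, which is (i). With \rf{21.9.15.2} in force, the inequality then forces $\mathscr{L}(v)=0$ for every $v(\cd)$, and choosing $v(\cd)=F(\cd,\a(\cd),X^*(\cd),Y^*(\cd),Z^*(\cd),u^*(\cd))$ --- which lies in $\cU[t,T]$ by {\rm(\textbf{H})} and the $L^2$-theory for \rf{state} and \rf{21.8.20.1} --- yields $\dbE\int_t^T|F(s,\a(s),X^*(s),Y^*(s),Z^*(s),u^*(s))|^2ds=0$, i.e.\ \rf{4.2}.

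\emph{Where the difficulty lies.}
The main obstacle is the It\^o-with-jumps bookkeeping in Step 2: I must check that the compensated jump martingale $\G^*(\cd)\bullet d\wt{N}(\cd)$ and the Brownian martingale both integrate to zero against $X^0(\cd)$, and that the jump part of $Y^*(\cd)$ genuinely drops out of the covariation because $X^0(\cd)$ is path-continuous; one also needs $F(\cd,\a(\cd),X^*(\cd),Y^*(\cd),Z^*(\cd),u^*(\cd))\in L^2_\dbF(t,T;\dbR^m)$, which follows from {\rm(\textbf{H})} and the integrability of the solutions of \rf{state} and \rf{21.8.20.1}, so that it is an admissible test direction. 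Everything else is the classical first- and second-order variational argument.
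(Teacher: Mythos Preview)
Your argument is correct and, at its core, coincides with the paper's: both rely on the quadratic expansion of $J_0$ around $u^*(\cd)$ and on an application of It\^o's formula with jumps to eliminate the state-perturbation term and expose $F(\cd,\a(\cd),X^*,Y^*,Z^*,u^*)$ as the gradient.

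The only difference is organizational. The paper first builds an operator framework: it introduces bounded linear operators $\cN_t$ and $\cL_t$ on $\cU[t,T]$ via auxiliary decoupled FBSDEs, proves the representation $J_0(t,\xi,\vf;u(\cd))=[[\cN_tu,u]]+2[[\cL_t\xi,u]]+\dbE\langle M(t)\xi,\xi\rangle$ (\autoref{Theorem3.4}), deduces that optimality is equivalent to $\cN_t\ges0$ together with $\cN_tu^*+\cL_t\xi=0$ (\autoref{Corollary3.5}), and then identifies $[\cN_tu^*+\cL_t\xi](s,\a(s))$ with $F(s,\a(s),X^*,Y^*,Z^*,u^*)$. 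Your proof bypasses this machinery: you perturb directly, apply It\^o to $\langle Y^*,X^0\rangle$ once, and read off the stationarity condition. What the paper's route buys is that the operators $\cN_t$, $\cL_t$ and the process $M(\cd)$ are reused later (boundedness of the value-flow process $P$, estimates in \autoref{21.8.22.4}); what your route buys is a self-contained proof of \autoref{Theorem4.1} that needs none of that scaffolding.
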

			\begin{remark} \rm
				In the assertion (ii) of \autoref{Theorem4.1}, the quadruple $(X^*(\cd),Y^*(\cd),Z^*(\cd),\G^*(\cd))$ is essentially the solution of the following decoupled FBSDE:
				\bel{4.1}\hspace{-0.5cm}\left\{\ba{ll}
				\ds dX^*(s)=\big[A(s,\a(s))X^*(s)+B(s,\a(s))u^*(s)\big]ds
				+\big[C(s,\a(s))X^*(s)+D(s,\a(s))u^*(s)\big]dW(s),\\
				\ns\ds dY^*(s)=-\widetilde{F}(s,\a(s),X^*(s), Y^*(s), Z^*(s), u^*(s))ds+Z(s)dW(s)+\G^*(s)\bullet d\widetilde{N}(s),\q s\in[t,T],\\
				\ns\ds X^*(t)=\xi,\q\a(t)=\vf,\q Y^*(T)=G(\a(T))X^*(T).
				\ea\right.\ee
				%
				%In other words, FBSDE \rf{4.1} is equivalent to SDE \rf{state} and BSDE \rf{21.8.20.1} when $u^*(\cd)\equiv u(\cd).$
				%
			\end{remark}

			\begin{proposition}\label{Theorem4.2} \sl
				Let the condition {\rm (\textbf{H})} hold. For any given initial triple $(t,\xi,\vf)\in\cD$, a control $u^*(\cd)\in\cU[t,T]$ is optimal for Problem (M-SLQ)$_0$ if and only if it is optimal for Problem  ${(\hb{M-SLQ})}$.
			\end{proposition}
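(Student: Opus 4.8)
The plan is to exploit the simple relationship between the two cost functionals: by \autoref{prob-expectation} we have $J_0(t,\xi,\vf;u(\cd)) = \dbE[J(t,\xi,\vf;u(\cd))]$ and $V_0(t,\xi,\vf) = \dbE[V(t,\xi,\vf)]$, and the state process $X(\cd)$ depends only on $(t,\xi,\vf,u(\cd))$, not on whether we condition on $\sF_t$ or not. So the real content is that conditional (almost-sure) optimality and expected optimality coincide here, and this hinges on the fact that the infimum in \rf{optim-2} is in fact \emph{attained} on a set of full measure by a single control, together with a measurable-selection/pasting argument showing that $\cU[t,T]$ is rich enough to localize.

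First I would prove the easy direction: if $u^*(\cd)$ is optimal for Problem (M-SLQ), then $J(t,\xi,\vf;u^*(\cd)) \le J(t,\xi,\vf;u(\cd))$ a.s.\ for every $u(\cd)\in\cU[t,T]$; taking expectations gives $J_0(t,\xi,\vf;u^*(\cd)) \le J_0(t,\xi,\vf;u(\cd))$, so $u^*(\cd)$ is optimal for Problem (M-SLQ)$_0$. For the converse, suppose $u^*(\cd)$ is optimal for Problem (M-SLQ)$_0$, so $\dbE\big[J(t,\xi,\vf;u^*(\cd)) - V(t,\xi,\vf)\big] = 0$. Since $J(t,\xi,\vf;u^*(\cd)) \ge V(t,\xi,\vf)$ by definition of the essential infimum, the integrand is a nonnegative random variable with zero expectation, hence $J(t,\xi,\vf;u^*(\cd)) = V(t,\xi,\vf)$ a.s., which is precisely the statement that $u^*(\cd)$ is open-loop optimal for Problem (M-SLQ).

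The step I expect to be the genuine obstacle — and the reason this is stated as a proposition rather than a triviality — is justifying that $\essinf_{u(\cd)\in\cU[t,T]} J(t,\xi,\vf;u(\cd))$ equals $V(t,\xi,\vf)$ with the essential infimum being a bona fide pointwise infimum that can be compared against $V(t,\xi,\vf)$ as in the previous paragraph; i.e.\ that $V_0(t,\xi,\vf) = \dbE[V(t,\xi,\vf)]$ is consistent with $V_0$ being the true (deterministic) infimum of $J_0$. Concretely, one must rule out the possibility that conditioning strictly improves the attainable value on a positive-measure set without a corresponding control in $\cU[t,T]$ realizing it. The way to close this is a \emph{pasting argument}: given any $\sF_t$-measurable partition of $\Om$ and controls $u_1(\cd),\dots,u_N(\cd)\in\cU[t,T]$, the concatenation $\sum_i u_i(\cd)I_{A_i}$ again lies in $\cU[t,T]$ because $\cU[t,T]=L^2_\dbF(t,T;\dbR^m)$ is stable under such $\sF_t$-measurable gluing, and the corresponding state and cost also glue (the SDE \rf{state} and the quadratic form \rf{cost} respect $\sF_t$-measurable restriction since $\xi$ and $\vf$ are $\sF_t$-measurable). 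This shows the family $\{J(t,\xi,\vf;u(\cd)) : u(\cd)\in\cU[t,T]\}$ is directed downward, so its essential infimum is the a.s.\ limit of a decreasing sequence drawn from the family, and $\dbE[\essinf] = \inf \dbE = V_0$ follows by monotone convergence. With that in hand, the nonnegative-zero-expectation argument above goes through verbatim.

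One technical point to be careful about: I would state and use that $V(t,\xi,\vf) \in L^1_{\sF_t}(\Om;\dbR)$ (or at least $V(t,\xi,\vf) > -\infty$ a.s.), which is not yet guaranteed without some convexity/coercivity hypothesis; but since this proposition is invoked downstream only under condition \rf{21.8.21.3.2}, it is safe to note that uniform convexity forces $J(t,\xi,\vf;u(\cd))$ to be bounded below (by a quadratic in $\xi$), so all the expectations above are finite and the argument is rigorous. I would present the proof in the order: (1) recall $J_0 = \dbE[J]$ and the directedness/pasting of $\cU[t,T]$; (2) deduce $V_0(t,\xi,\vf) = \dbE[V(t,\xi,\vf)]$; (3) the easy direction by taking expectations; (4) the converse via the nonnegative random variable with zero mean.
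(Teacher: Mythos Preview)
Your proof is correct and takes a genuinely different route from the paper's.

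The paper does not use the lattice/essential-infimum machinery at all. Instead, it leans on \autoref{Theorem4.1}: if $u^*(\cd)$ is optimal for Problem (M-SLQ)$_0$, then the FBSDE stationarity condition \rf{4.2} holds. For an arbitrary $\Lambda\in\sF_t$ the paper multiplies $(X^*,Y^*,Z^*,\G^*,u^*)$ by $\mathbf{1}_\Lambda$, checks that the truncated quintuple solves the FBSDE \rf{4.1} with initial triple $(t,\xi\mathbf{1}_\Lambda,\vf\mathbf{1}_\Lambda)$, and observes that the stationarity condition survives multiplication by $\mathbf{1}_\Lambda$. Applying \autoref{Theorem4.1} in reverse shows $u^*\mathbf{1}_\Lambda$ is optimal for (M-SLQ)$_0$ at that truncated initial triple, which yields $\dbE[L(t,\xi,\vf;u^*)\mathbf{1}_\Lambda]\le \dbE[L(t,\xi,\vf;u)\mathbf{1}_\Lambda]$ for every $u$ and every $\Lambda\in\sF_t$, i.e.\ the conditional inequality.

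Your approach is more elementary and more general: the directedness-via-pasting argument plus the nonnegative-zero-mean trick uses nothing about the LQ structure or the adjoint equation, and would work verbatim for any cost functional that localizes under $\sF_t$-measurable gluing. The paper's approach, by contrast, exploits the linearity of the FBSDE system and the pointwise nature of the stationarity condition --- a route that fits more organically with the surrounding machinery of the paper, but is less portable.

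One remark: your caveat about needing $V(t,\xi,\vf)\in L^1$ or uniform convexity is unnecessary. Under the hypothesis of the proposition an optimal $u^*$ for (M-SLQ)$_0$ exists, so $V_0(t,\xi,\vf)=J_0(t,\xi,\vf;u^*)$ is finite. Your decreasing sequence $J_n\downarrow V$ has each $J_n$ integrable under {\rm(\textbf{H})}, so monotone convergence gives $\dbE[J_n]\to\dbE[V]$; since $\dbE[J_n]\ge V_0>-\infty$ we get $\dbE[V]\ge V_0$, and together with $\dbE[V]\le V_0$ this forces $\dbE[V]=V_0$ finite. The rest follows. So your argument actually works under {\rm(\textbf{H})} alone, matching the paper's hypotheses.
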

			%
			% \autoref{Theorem4.2} implies that Problem (M-SLQ)$_0$ and Problem  (M-SLQ) are equivalent. Therefore, from now on, we would like to simply call both of them the Problem (M-SLQ), although we will keep using the stochastic value flow ${V}(\cd)$ and the value flow $V_0(\cd)$ later.
			%
			%Moreover,
			Based on \autoref{Theorem4.1} and \autoref{Theorem4.2}, we have the following results, %two corollaries,
			which are useful to prove \autoref{21.9.3.1} later. For this, denote by $\tau$ an $\mathbb{F}$-stopping time with values in $[0, T]$ and denote by $\mathcal{T}[a,b]$ the set of all $\mathbb{F}$-stopping times valued in the interval $[a,b]$ with $a,b\in[0,T]$.
			\begin{corollary}\label{21.9.3.3} \sl
				Let {\rm (\textbf{H})} hold and suppose that $u^*(\cd)\in\cU[t,T]$  is an open-loop optimal control w.r.t. the initial triple $(t,\xi,\vf)\in\cD$, then
				$${V}(t,\xi,\vf)={J}(t,\xi,\vf;u^*(\cd))=\langle Y^*(t),\xi\rangle,$$
				where the quadruple $(X^*(\cd),Y^*(\cd),Z^*(\cd),\G^*(\cd))$ is the adapted solution of FBSDE \rf{4.1} w.r.t. $u^*(\cd)$.
			\end{corollary}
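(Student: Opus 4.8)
The plan is to exploit \autoref{Theorem4.2}, which tells us that an open-loop optimal control for Problem (M-SLQ) is also optimal for Problem (M-SLQ)$_0$, together with the stationarity condition \rf{4.2} from \autoref{Theorem4.1}. First I would fix the initial triple $(t,\xi,\vf)\in\cD$ and the open-loop optimal control $u^*(\cd)$, and let $(X^*(\cd),Y^*(\cd),Z^*(\cd),\G^*(\cd))$ be the adapted solution of the decoupled FBSDE \rf{4.1}. The first equality ${V}(t,\xi,\vf)={J}(t,\xi,\vf;u^*(\cd))$ is immediate from the definition of open-loop optimality, so the real content is the identity ${J}(t,\xi,\vf;u^*(\cd))=\langle Y^*(t),\xi\rangle$.

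The core computation is to apply It\^o's formula with jumps to $s\mapsto\langle Y^*(s),X^*(s)\rangle$ on $[t,T]$. Using the dynamics of $X^*(\cd)$ and $Y^*(\cd)$ from \rf{4.1}, the $ds$-terms collect into
$$
-\big\lan \widetilde F(s,\a(s),X^*,Y^*,Z^*,u^*),X^*\big\ran
+\big\lan Y^*, A(s,\a(s))X^*+B(s,\a(s))u^*\big\ran
+\big\lan Z^*, C(s,\a(s))X^*+D(s,\a(s))u^*\big\ran,
$$
while the local-martingale terms (the $dW$ integral and the $\bullet\, d\widetilde N$ integral) have zero conditional expectation given $\sF_t$ after one checks square-integrability, which follows from (\textbf{H}) and the standard estimates for \rf{state} and \rf{21.8.20.1}. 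Substituting the definition of $\widetilde F$ and regrouping, the integrand becomes
$$
-\lan Q X^*,X^*\ran-\lan S^\top u^*,X^*\ran+\lan Y^*,Bu^*\ran+\lan Z^*,Du^*\ran
=-\lan QX^*,X^*\ran-\lan S X^*,u^*\ran-\lan Ru^*,u^*\ran
+\big\lan F(s,\a(s),X^*,Y^*,Z^*,u^*),u^*\big\ran,
$$
where I have added and subtracted $\lan Ru^*,u^*\ran$ to manufacture the term $F(\cd)$. By the stationarity condition \rf{4.2}, $F(s,\a(s),X^*(s),Y^*(s),Z^*(s),u^*(s))=0$ a.e., a.s., so the last term vanishes and the $ds$-integrand is exactly the negative of the running cost density in \rf{cost}.

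Putting this together, taking $\dbE_t[\cdot]$, and using the terminal condition $Y^*(T)=G(\a(T))X^*(T)$ gives
$$
\dbE_t\big[\lan G(\a(T))X^*(T),X^*(T)\ran\big]-\lan Y^*(t),\xi\ran
=-\dbE_t\int_t^T\llan\begin{pmatrix}Q&S^\top\\S&R\end{pmatrix}\begin{pmatrix}X^*\\u^*\end{pmatrix},\begin{pmatrix}X^*\\u^*\end{pmatrix}\rran ds,
$$
which rearranges precisely to ${J}(t,\xi,\vf;u^*(\cd))=\lan Y^*(t),\xi\ran$ (noting $Y^*(t)$ is $\sF_t$-measurable, so $\dbE_t\lan Y^*(t),\xi\ran=\lan Y^*(t),\xi\ran$). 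I do not anticipate a deep obstacle here; the one point requiring a little care is justifying that the stochastic integrals against $dW(\cd)$ and $d\widetilde N(\cd)$ are genuine martingales (not merely local martingales) on $[t,T]$, which needs the square-integrability of $Z^*(\cd)$, $\G^*(\cd)$ in the appropriate spaces — but this is guaranteed by the well-posedness of BSDE \rf{21.8.20.1} under (\textbf{H}), already invoked in the paper. A secondary bookkeeping point is that \autoref{Theorem4.2} is what licenses us to use the stationarity condition \rf{4.2} (stated for Problem (M-SLQ)$_0$) for a control that is optimal for Problem (M-SLQ).
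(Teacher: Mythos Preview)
Your proposal is correct and follows essentially the same route as the paper. The paper's proof invokes the stationarity condition \rf{4.2} from \autoref{Theorem4.1} and then cites \autoref{Lemma3.1} (the It\^o-formula identity $J(t,\xi,\vf;u)=\langle Y(t),\xi\rangle+\dbE_t\int_t^T\langle F,u\rangle\,ds$), whereas you reproduce that It\^o computation inline; your explicit mention of \autoref{Theorem4.2} to pass from (M-SLQ) to (M-SLQ)$_0$ is a point the paper leaves implicit.
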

			%
			%\begin{proof}\rm
			%Note that $(X^*,u^*)$ is an optimal pair with respect to $(t,\xi,\vf)$, so from \autoref{Theorem4.1}, we have that
			%%
			%\begin{align*}
			%B(s,\a(s))^\top Y^*(s)+D(s,\a(s))^\top Z^*(s)
			%+S(s,\a(s))X^*(s)+R(s,\a(s))u^*(s)=0,\q \hb{a.e. $s\in[t,T]$, a.s.}
			%\end{align*}
			%%
			%Then the result follows immediately from \autoref{Lemma3.1}.
			%%
			%\end{proof}
			%
			%The next property is called the time-consistency of the optimal control.
			
			\begin{corollary}\label{Corollary4.5}\sl
				Let {\rm (\textbf{H})} hold and suppose that $u^*(\cd)\in\cU[t,T]$  is an open-loop optimal control w.r.t. the initial triple $(t,\xi,\vf)\in\cD$, then for any stopping time $\t\in\cT[t,T]$, the restriction
				$$u^*(\cd)|_{[\t,T]}\deq \{ u^*(s); s\in[\t,T] \}$$
				over the time horizon $[\t,T]$ remains optimal w.r.t. the initial triple $(\t,X^*(\t),\a(\t))$, where $X^*(\cd)$ is the solution of the forward equation of \rf{4.1}.
				
			\end{corollary}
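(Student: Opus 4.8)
The plan is to reduce the time-consistency statement to the stationarity characterization of optimality already proved in \autoref{Theorem4.1}, transported to the conditional problem by \autoref{Theorem4.2}, rather than arguing by a direct dynamic-programming comparison. First I would invoke \autoref{Theorem4.2} to pass freely between Problem (M-SLQ) and Problem (M-SLQ)$_0$: a control is optimal for one iff it is optimal for the other, so it suffices to establish the restriction property for Problem (M-SLQ)$_0$. Next, since $u^*(\cd)$ is open-loop optimal w.r.t. $(t,\xi,\vf)$, \autoref{Theorem4.1} gives two things simultaneously — the convexity condition \rf{21.9.15.2}, $J_0(t,0,\vf;u(\cd))\ges 0$ for all $u(\cd)\in\cU[t,T]$, and the stationarity condition \rf{4.2}, $F(s,\a(s),X^*(s),Y^*(s),Z^*(s),u^*(s))=0$ a.e. on $[t,T]$, a.s., where $(X^*(\cd),Y^*(\cd),Z^*(\cd),\G^*(\cd))$ solves FBSDE \rf{4.1}. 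The idea is that both of these conditions are "local" in time in the sense that they restrict to $[\t,T]$ essentially for free.

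For the stationarity condition this is immediate: equation \rf{4.2} holds a.e.\ on all of $[t,T]$, hence in particular a.e.\ on $[\t,T]$, and by the flow property of SDEs/BSDEs the restriction to $[\t,T]$ of the quadruple $(X^*(\cd),Y^*(\cd),Z^*(\cd),\G^*(\cd))$ is precisely the adapted solution of FBSDE \rf{4.1} associated with the initial triple $(\t,X^*(\t),\a(\t))$ and the control $u^*(\cd)|_{[\t,T]}$ (using $Y^*(T)=G(\a(T))X^*(T)$ as terminal condition and $X^*(\t)$ as the new initial datum — one should note that $X^*(\t)\in L^2_{\sF_\t}(\O;\dbR^n)$ and $\a(\t)\in L^2_{\sF^\a_\t}(\O;\cS)$, so $(\t,X^*(\t),\a(\t))\in\cD$, and that $\t$ being a stopping time is exactly what is needed for the restricted control to lie in $\cU[\t,T]$). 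For the convexity condition one must check that $J_0(\t,0,\a(\t);v(\cd))\ges 0$ for every $v(\cd)\in\cU[\t,T]$; here I would extend an arbitrary $v(\cd)\in\cU[\t,T]$ to a control $\bar v(\cd)\in\cU[t,T]$ by setting $\bar v(s)=0$ on $[t,\t)$ and $\bar v(s)=v(s)$ on $[\t,T]$, run the homogeneous ($\xi=0$) state equation, observe that on $[t,\t)$ the zero input with zero initial condition yields $X\equiv0$ there, so $X(\t)=0$, and then the tower property $\dbE[J_0\text{-integrand}] = \dbE\big[\dbE[\,\cdot\mid\sF_\t]\big]$ together with the quadratic structure of \rf{cost} gives $J_0(t,0,\vf;\bar v(\cd)) = \dbE\big[J(\t,0,\a(\t);v(\cd))\big]$, which is $\ges 0$ by \rf{21.9.15.2}; since this holds for the restricted problem started at $0$ and the inner conditional cost is $\sF_\t$-measurable, one concludes $J(\t,0,\a(\t);v(\cd))\ges 0$ a.s., i.e.\ the analogue of condition (i) holds at $\t$.

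Having verified both (i) and (ii) of \autoref{Theorem4.1} for the restricted data $(\t,X^*(\t),\a(\t))$ and the restricted control $u^*(\cd)|_{[\t,T]}$, the "if" direction of that theorem yields that $u^*(\cd)|_{[\t,T]}$ is an open-loop optimal control of Problem (M-SLQ)$_0$ w.r.t.\ $(\t,X^*(\t),\a(\t))$, and one final application of \autoref{Theorem4.2} upgrades this to optimality for Problem (M-SLQ), which is the claim. The main obstacle I anticipate is the careful handling of the stopping time $\t$ in the flow/restriction arguments — verifying that solutions of \rf{state} and \rf{21.8.20.1} restrict cleanly from $[t,T]$ to the random interval $[\t,T]$, that the resulting processes are adapted to the appropriately shifted filtration, and that the measurability bookkeeping (in particular that the conditional cost $J(\t,0,\a(\t);v(\cd))$ is a well-defined $\sF_\t$-measurable random variable and that "$\ges 0$ in expectation" promotes to "$\ges 0$ a.s." because one may localize $v$ on $\sF_\t$-measurable sets) goes through. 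These are standard but need the strong Markov / flow structure; everything else is a direct transcription of \autoref{Theorem4.1} and \autoref{Theorem4.2}.
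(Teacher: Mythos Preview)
Your approach is essentially the same as the paper's: reduce to the two-condition characterization of \autoref{Theorem4.1} (together with \autoref{Theorem4.2}), verify the stationarity condition by simply restricting the FBSDE \rf{4.1} solution to $[\t,T]$, and verify the convexity condition by zero-extending an arbitrary $v(\cd)\in\cU[\t,T]$ to $[t,T]$ and observing that the homogeneous state stays at $0$ on $[t,\t)$, so that $J_0(\t,0,\a(\t);v(\cd))=J_0(t,0,\vf;\bar v(\cd))\ges 0$.

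One small simplification: you do not need the final step of promoting $\dbE[J(\t,0,\a(\t);v(\cd))]\ges 0$ to $J(\t,0,\a(\t);v(\cd))\ges 0$ a.s.\ via localization on $\sF_\t$-sets. Condition (i) of \autoref{Theorem4.1} asks only for $J_0(\t,0,\a(\t);v(\cd))\ges 0$, and that is exactly what the zero-extension identity delivers; the paper stops there. Your caution about applying \autoref{Theorem4.1} at a stopping time rather than a deterministic time is well taken---the paper does this without comment---but the underlying It\^o and Hilbert-space computations in \autoref{Lemma3.1} and \autoref{Corollary3.5} carry over verbatim to bounded stopping times, so no new idea is needed.
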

			
			\begin{proposition}\label{21.8.21.4} \sl
				Let {\rm (\textbf{H})} and \rf{21.8.21.3.2} hold, then for any $\t\in\cT[0,T)$,
				$$J_0(\tau,0,\a(\t); u(\cd)) \geqslant \e \mathbb{E} \int_{\tau}^{T}|u(s)|^{2} d s, \q \forall u(\cd) \in \mathcal{U}[\tau, T] .$$
				As a direct consequence, Problem (M-SLQ) is uniquely solvable.
			\end{proposition}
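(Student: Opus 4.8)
The plan is to transfer the coercivity from time $0$ to the stopping time $\tau$ by \emph{embedding} the control problem over the horizon $[\tau,T]$ into the original problem over $[0,T]$, exploiting the elementary fact that a state process started at the origin and driven by the zero control remains at the origin: if one pads a control on $[\tau,T]$ with zeros on $[0,\tau]$, the associated state vanishes on $[0,\tau]$, so no cost is accumulated there and the whole $[0,T]$-cost collapses to the $[\tau,T]$-cost. Hypothesis \rf{21.8.21.3.2} then gives the desired lower bound directly.

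In detail, I would argue as follows. Given $u(\cd)\in\cU[\tau,T]$, define $\bar u(s)\deq u(s)I_{\{s\ges\tau\}}$ (setting $u\equiv0$ on $\{s<\tau\}$); since $\tau$ is an $\dbF$-stopping time, $\bar u(\cd)\in\cU[0,T]$ and $\dbE\int_0^T|\bar u(s)|^2ds=\dbE\int_\tau^T|u(s)|^2ds$. Let $\bar X(\cd)$ solve \rf{state} on $[0,T]$ for the initial triple $(0,0,i_0)$ and control $\bar u(\cd)$. On $[0,\tau]$ one has $\bar u\equiv0$, so $\bar X$ solves $d\bar X=A(s,\a(s))\bar X\,ds+C(s,\a(s))\bar X\,dW$ with $\bar X(0)=0$; by uniqueness, $\bar X\equiv0$ on $[0,\tau]$, whence $\bar X|_{[\tau,T]}$ coincides with the state process attached to the initial triple $(\tau,0,\a(\tau))$ and the control $u(\cd)$, the regime steering it on $[\tau,T]$ being exactly $\a(\cd)|_{[\tau,T]}$. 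Splitting the running-cost integral in $J_0(0,0,i_0;\bar u(\cd))$ at $\tau$ and using that $\bar X$ and $\bar u$ both vanish on $[0,\tau]$, the $[0,\tau]$-part drops out, so
$$J_0(0,0,i_0;\bar u(\cd))=J_0(\tau,0,\a(\tau);u(\cd)).$$
Since $\sF_0$ is $\dbP$-trivial, $J(0,0,i_0;\cd)=J_0(0,0,i_0;\cd)$, and \rf{21.8.21.3.2} yields
$$J_0(\tau,0,\a(\tau);u(\cd))=J_0(0,0,i_0;\bar u(\cd))\ges\e\,\dbE\int_0^T|\bar u(s)|^2ds=\e\,\dbE\int_\tau^T|u(s)|^2ds,$$
which is the asserted inequality.

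For the ``direct consequence'', I would specialize to $\tau\equiv t\in[0,T)$: the inequality just proved says that the homogeneous cost functional $u(\cd)\mapsto J_0(t,0,\a(t);u(\cd))$ is uniformly convex for every $t$. Using the standard decomposition $J_0(t,\xi,\vf;u)=J_0(t,\xi,\vf;0)+2\,\ell(u)+J_0(t,0,\vf;u)$, with $\ell$ linear in $u$ (coming from the linearity of the state in $(\xi,u)$ and the quadratic form of the cost), the functional $J_0(t,\xi,\vf;\cd)$ is continuous and uniformly convex on the Hilbert space $\cU[t,T]$, hence has a unique minimizer; \autoref{Theorem4.2} then transfers unique solvability to Problem (M-SLQ). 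A general $\sF^\a_t$-measurable initial regime $\vf$ in place of $\a(t)$ is handled in the same fashion after conditioning on the finitely many values of $\vf$ and invoking \rf{21.8.21.3.2} at the corresponding states $i_0\in\cS$.

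The main obstacle I anticipate is the identity $J_0(0,0,i_0;\bar u(\cd))=J_0(\tau,0,\a(\tau);u(\cd))$: it must be checked carefully that the padded state genuinely vanishes on the \emph{random} interval $[0,\tau]$ — so that the regime-switching term contributes nothing to either the state or the running cost there — and that the regime process driving the $(\tau,0,\a(\tau))$-problem is exactly $\a(\cd)$ restricted to $[\tau,T]$, so that the terminal and running costs on $[\tau,T]$ match term by term with those computed from $\bar X$. A secondary, more bookkeeping-type point is to make precise the cost functional and the well-posedness of \rf{state} when the initial time is a stopping time rather than a deterministic time; this is the same setting already invoked for \autoref{Corollary4.5}, and once it is granted the remaining arguments are routine.
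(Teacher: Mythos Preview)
Your proposal is correct and follows essentially the same route as the paper: zero-extend $u$ from $[\tau,T]$ to $[0,T]$, observe that the padded state vanishes on $[0,\tau]$ so the two costs coincide, apply \rf{21.8.21.3.2}, and then deduce unique solvability by specializing $\tau\equiv t$ and invoking uniform convexity together with \autoref{Theorem4.2}. The paper packages the embedding step by reference to the computation \rf{4.5} in the proof of \autoref{Corollary4.5} and phrases the convexity step via the operators $\cN_t,\cL_t$ (\autoref{remark5.6} and \autoref{Corollary3.5}), but this is the same argument you wrote out directly.
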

			Next, we state some properties for the stochastic value flow ${V}(t,\xi,\vf)$, which are important to prove \autoref{21.9.3.1} later.
			Recall that $\{e_{j}\}_{j=1}^{n}$ is the standard basis of $\mathbb{R}^{n}$.
			\begin{proposition}\label{21.8.21.2} \sl
				Let {\rm (\textbf{H})} hold. Suppose that Problem (M-SLQ) is solvable at the initial triple $(t,e_j,\vf)$ with $1\les j\les n$. Let the state-control pair $\{\left(X_{j}(s), u_{j}(s)\right) \}_{s\in[t,T]}$ of SDE \rf{state} be an open-loop optimal pair w.r.t. $(t,e_j,\vf)$, and
				let $\{\left(Y_{j}(s), Z_{j}(s),\G^j(s)\right)\}_{s\in[t,T]}$ be the solution of the associated adjoint equation \rf{21.8.20.1}. Denote $\boldsymbol{\G}_{kl}(s)\deq (\G_{kl}^1(s),\cdots, \G_{kl}^n(s))$ and
				%							%
				\begin{equation}\label{22.6.10.1}
					\begin{aligned}
						\ds	\BX(s)&\deq \left(X_{1}(s), \cdots, X_{n}(s)\right), \q
						\BU(s)\deq \left(u_{1}(s), \cdots, u_{n}(s)\right),\q  s\in[t,T],\\
						\ns\ds 	\BY(s)&\deq \left(Y_{1}(s), \cdots, Y_{n}(s)\right),\q
						\BZ(s)\deq \left(Z_{1}(s), \cdots, Z_{n}(s)\right),\q \boldsymbol{\G}(s)\deq \big(\boldsymbol{\G}_{kl}(s)\big)_{{k,l}\in\cS}.
					\end{aligned}
				\end{equation}
				Then the quintuple of matrix-valued processes $\big(\boldsymbol{X}(\cd), \boldsymbol{U}(\cd), \boldsymbol{Y}(\cd),\boldsymbol{Z}(\cd),\boldsymbol{\G}(\cd)\big)$ satisfies the following FBSDE:
				\begin{equation}\label{22.6.25.2}
					\left\{\ba{ll}
					\ds d\BX(s)=\big[A(s,\a(s))\BX(s)+B(s,\a(s))\BU(s)\big]ds
					+\big[C(s,\a(s))\BX(s)+D(s,\a(s))\BU(s)\big]dW(s),\\
					\ns\ds d\BY(s)=-\widetilde{F}(s,\a(s),\BX(s), \BY(s), \BZ(s), \BU(s))ds %-\big[A(s,\a(s))^\top \BY(s)+C(s,\a(s))^\top \BZ(s)+Q(s,\a(s))\BX(s)+S(s,\a(s))^\top \BU(s)\big]ds\\
					+\BZ(s)dW(s)+\boldsymbol{\G}(s)\bullet d\widetilde{N}(s),\q s\in[t,T],\\
					\ns\ds \BX(t)=I_n,\q \a(t)=\vf,\q\BY(T)=G(\a(T))\BX(T),
					\ea\right.
				\end{equation}
				and
				\begin{align}\label{21.8.20.3}
					F(s,\a(s),\BX(s), \BY(s), \BZ(s), \BU(s))=0, \q \hb{a.e. $s\in[t,T]$, a.s.}.
				\end{align}
				In addition, for every $\xi \in L_{\sF_{t}}^{\infty}\left(\Omega ; \mathbb{R}^{n}\right)$, the state-control pair
				$(\BX \xi, \BU \xi)=\{(\BX(s) \xi, \BU(s) \xi)\}_{s\in[t,T]}$
				is optimal with respect to $(t,\xi,\vf)$ and the triple
				$(\BY \xi, \BZ \xi, \boldsymbol{\G}\circ\xi )=\{(\BY(s) \xi, \BZ(s) \xi,\boldsymbol{\G}(s)\circ\xi) \}_{s\in[t,T]}$
				with $\mathbf{\G}(s)\circ\xi\deq(\G_{kl}(s)\xi)_{k,l\in \cS}$
				solves the adjoint BSDE \rf{21.8.20.1} associated with the state-control pair $(\BX \xi, \BU \xi)$.
			\end{proposition}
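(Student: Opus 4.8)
The plan is to reduce everything to the columnwise structure of the matrix-valued processes, exploiting the characterization of optimality in \autoref{Theorem4.1}, the equivalence in \autoref{Theorem4.2}, and the linearity of the coefficients $A,B,C,D$ and of the maps $F$, $\widetilde F$ and $\xi\mapsto G(\a(T))\xi$. First, since Problem (M-SLQ) is solvable at each $(t,e_j,\vf)$, \autoref{Theorem4.2} shows $u_j(\cd)$ is also optimal for Problem (M-SLQ)$_0$ at $(t,e_j,\vf)$; hence \autoref{Theorem4.1} yields (a) the convexity condition $J_0(t,0,\vf;u(\cd))\ges0$ for all $u(\cd)\in\cU[t,T]$, which depends only on $\vf$ and not on $j$, and (b) that $(X_j,u_j,Y_j,Z_j,\G^j)$ solves the decoupled FBSDE \rf{4.1} with $\xi$ replaced by $e_j$, together with the stationarity condition $F(s,\a(s),X_j(s),Y_j(s),Z_j(s),u_j(s))=0$ a.e.\ a.s.

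Next I would assemble these $n$ systems into the single matrix system \rf{22.6.25.2}. Placing $X_j,u_j,Y_j,Z_j$ as the $j$-th columns of $\BX,\BU,\BY,\BZ$, the linearity of $A,B,C,D$ turns the $n$ forward equations into the forward equation of \rf{22.6.25.2}, the linearity of $\widetilde F$ and of $\xi\mapsto G(\a(T))\xi$ does the same for the backward drift and the terminal condition, and the stationarity conditions from (b) assemble into \rf{21.8.20.3}. The only delicate point is the jump term: with $\boldsymbol{\G}_{kl}(s)=(\G^1_{kl}(s),\cdots,\G^n_{kl}(s))\in\dbR^{n\ts n}$, the $j$-th column of $\boldsymbol{\G}(s)\bullet d\widetilde N(s)=\sum_{k,l}\boldsymbol{\G}_{kl}(s)\,d\widetilde N_{kl}(s)$ is precisely $\sum_{k,l}\G^j_{kl}(s)\,d\widetilde N_{kl}(s)=\G^j(s)\bullet d\widetilde N(s)$, which is exactly what the $j$-th row of the vector backward equation \rf{21.8.20.1} supplies; this is the reason $\boldsymbol{\G}$ must be built componentwise in $(k,l)$ as in \rf{22.6.10.1} rather than by concatenating the $\G^j$ into one wide matrix. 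Membership of $(\BX,\BU,\BY,\BZ,\boldsymbol{\G})$ in the required spaces is inherited from the columns.

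Finally, fix $\xi\in L^\infty_{\sF_t}(\O;\dbR^n)$. Since $\xi$ is $\sF_t$-measurable, bounded, and constant in $s$, it may be pulled through the stochastic and jump integrals over $[t,T]$ (by the usual approximation of $\xi$ by simple $\sF_t$-measurable random variables); right-multiplying the forward equation of \rf{22.6.25.2} by $\xi$ then shows that $\BX\xi$ solves the state equation \rf{state} with control $\BU\xi$ and initial datum $\BX(t)\xi=\xi$, while boundedness of $\xi$ gives $\BU\xi\in\cU[t,T]$, so $(\BX\xi,\BU\xi)$ is admissible at $(t,\xi,\vf)$. Right-multiplying the backward equation by $\xi$, and using $\big(\boldsymbol{\G}(s)\bullet d\widetilde N(s)\big)\xi=\sum_{k,l}\big(\boldsymbol{\G}_{kl}(s)\xi\big)\,d\widetilde N_{kl}(s)=\big(\boldsymbol{\G}(s)\circ\xi\big)\bullet d\widetilde N(s)$ together with the linearity of $\widetilde F$ and of $\xi\mapsto G(\a(T))\xi$, shows that $(\BY\xi,\BZ\xi,\boldsymbol{\G}\circ\xi)$ solves the adjoint equation \rf{21.8.20.1} associated with $(\BX\xi,\BU\xi)$. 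Moreover \rf{21.8.20.3} gives $F(s,\a(s),\BX(s)\xi,\BY(s)\xi,\BZ(s)\xi,\BU(s)\xi)=F(s,\a(s),\BX(s),\BY(s),\BZ(s),\BU(s))\xi=0$, i.e.\ the stationarity condition (ii) of \autoref{Theorem4.1} holds for $(\BX\xi,\BU\xi)$; combined with the convexity condition (a) from the first step, \autoref{Theorem4.1} shows $(\BX\xi,\BU\xi)$ is optimal for Problem (M-SLQ)$_0$ at $(t,\xi,\vf)$, and then \autoref{Theorem4.2} upgrades this to optimality for Problem (M-SLQ).

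I expect the main obstacle to be purely the bookkeeping around $\boldsymbol{\G}$: verifying that the componentwise-in-$(k,l)$ stacking in \rf{22.6.10.1} is the correct way to package the jump components so that the matrix backward equation is consistent with the $n$ vector adjoint equations, and that it is the operation $\boldsymbol{\G}\circ\xi$ --- rather than ordinary matrix multiplication, as for $\BY\xi$ and $\BZ\xi$ --- that appears when one scales by $\xi$. This is exactly the subtlety flagged in item (v) of the introduction and the only place where the argument genuinely departs from the Brownian-only setting of \cite{Sun-Xiong-Yong-21}; everything else is linearity plus standard well-posedness of linear SDEs and BSDEs with jumps.
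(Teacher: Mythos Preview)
Your proof is correct and follows essentially the same approach as the paper's own argument: the paper also invokes \autoref{Theorem4.1} for both the columnwise assembly of the matrix FBSDE \rf{22.6.25.2} and the stationarity condition, then checks that $(\BX\xi,\BU\xi)$ together with $(\BY\xi,\BZ\xi,\boldsymbol{\G}\circ\xi)$ satisfies the admissible state equation, the adjoint equation, and the stationarity condition, concluding via \autoref{Theorem4.1}. You are simply more explicit than the paper about the $\boldsymbol{\G}$ bookkeeping and about passing through \autoref{Theorem4.2} to move between Problem (M-SLQ) and Problem (M-SLQ)$_0$, points the paper leaves implicit.
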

			Based on the above results, we have the following theorems, which present a quadratic form of the stochastic value flow into a bounded and left-continuous process.
			\begin{theorem}\label{21.8.21.5} \sl
				Let (\textbf{H}) hold. For any given $t\in[0,T]$ and
				$\vf\in L_{\sF_{t}^\a}^{2}(\Omega ; \cS)$,
				if Problem (M-SLQ) is solvable at the initial time $t$,
				then there is a process $P:[0,T]\ts\cS\ts\Omega\rightarrow\dbS^n$, which is $\dbF$-adapted, such that
				\begin{equation}\label{21.8.21.6}
					{V}(t,\xi,\vf)=\langle P(t,\vf) \xi, \xi\rangle, \q \forall \xi \in L_{\sF_{t}}^{\infty}(\Omega ; \mathbb{R}^{n}).
				\end{equation}
			\end{theorem}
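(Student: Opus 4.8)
The plan is to take $P(t,\varphi)$ to be the symmetric part of $\BY(t)$, where $\BY(\cd)$ is the matrix-valued process furnished by \autoref{21.8.21.2}, and then to show that this single $\sF_t$-measurable matrix in fact depends on $\varphi$ only through its finite-state value, which is the genuinely new point here.

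First I would fix $t\in[0,T]$ and $\varphi\in L^2_{\sF_t^\a}(\O;\cS)$. Since Problem (M-SLQ) is solvable at time $t$, it is solvable at each initial triple $(t,e_j,\varphi)$, $1\les j\les n$, so \autoref{21.8.21.2} produces matrix-valued processes $(\BX(\cd),\BU(\cd),\BY(\cd),\BZ(\cd),\boldsymbol{\G}(\cd))$ solving \rf{22.6.25.2} with $\BX(t)=I_n$, such that for every $\xi\in L^\i_{\sF_t}(\O;\dbR^n)$ the pair $(\BX\xi,\BU\xi)$ is optimal for $(t,\xi,\varphi)$ with adjoint triple $(\BY\xi,\BZ\xi,\boldsymbol{\G}\circ\xi)$. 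Applying \autoref{21.9.3.3} to $u^*(\cd)=\BU(\cd)\xi$ gives $V(t,\xi,\varphi)=\lan Y^*(t),\xi\ran=\lan\BY(t)\xi,\xi\ran$. Put $\Pi_\varphi(t)\deq\frac12(\BY(t)+\BY(t)^\top)\in L^2_{\sF_t}(\O;\dbS^n)$; then $V(t,\xi,\varphi)=\lan\Pi_\varphi(t)\xi,\xi\ran$ for all $\xi\in L^\i_{\sF_t}(\O;\dbR^n)$, and testing against $\xi=e_j\mathbf{1}_B$ and $\xi=(e_j+e_k)\mathbf{1}_B$ with $B\in\sF_t$ shows that the symmetric $\sF_t$-measurable matrix $\Pi_\varphi(t)$ is uniquely pinned down by $\xi\mapsto V(t,\xi,\varphi)$ and in particular is independent of the choice of optimal pairs.

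Next I would repeat this construction with $\varphi$ replaced by each deterministic state $i\in\cS$ (a legitimate initial regime, since constants lie in $L^2_{\sF_t^\a}(\O;\cS)$), obtaining $\Pi_i(t)$, and set $P(t,i)\deq\Pi_i(t)$. For general $\varphi$, let $A_i\deq\{\varphi=i\}\in\sF_t$, so $(A_i)_{i\in\cS}$ partitions $\O$. The coefficients and weighting matrices enter \rf{state} and \rf{cost} only through the regime path on $[t,T]$, and by the flow property of the chain the regime paths issued from the data $\varphi$ and $i$ agree on $A_i$; hence $\mathbf{1}_{A_i}J(t,\xi,\varphi;u(\cd))=\mathbf{1}_{A_i}J(t,\xi,i;u(\cd))$ for all $\xi\in L^\i_{\sF_t}(\O;\dbR^n)$ and $u(\cd)\in\cU[t,T]$. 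Taking the essential infimum over $u(\cd)$, which commutes with multiplication by the $\sF_t$-indicator $\mathbf{1}_{A_i}$, yields $\mathbf{1}_{A_i}V(t,\xi,\varphi)=\mathbf{1}_{A_i}V(t,\xi,i)$; comparing the quadratic representations and using the uniqueness above gives $\mathbf{1}_{A_i}\Pi_\varphi(t)=\mathbf{1}_{A_i}P(t,i)$. Summing over $i\in\cS$, $\Pi_\varphi(t)=\sum_{i\in\cS}P(t,i)\mathbf{1}_{A_i}=P(t,\varphi)$, which is \rf{21.8.21.6}.

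Since for each $i$ the relevant $\BY$-process is the first component of the adapted solution of the adjoint BSDE \rf{21.8.20.1}, $P(t,i)$ is $\sF_t$-measurable for each $t$, so $P:[0,T]\ts\cS\ts\O\ra\dbS^n$ is $\dbF$-adapted (joint measurability in $t$ being routine; the sharper boundedness and left-continuity are deferred to \autoref{21.8.22.4}). The main obstacle is precisely the factoring step: unlike in Sun--Xiong--Yong \cite{Sun-Xiong-Yong-21}, where the analogue of $P$ is literally a conditional expectation carrying no finite-state index, here one must extract from a single $\sF_t$-measurable random matrix an honest function of the regime, which both forces a well-posed formulation of Problem (M-SLQ) with a deterministic initial regime $i$ and requires the locality/flow argument above.
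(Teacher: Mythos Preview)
Your argument is correct and takes a genuinely different route from the paper. The paper expresses the optimal cost pathwise as $L(t,\xi,\vf;\BU\xi)=\langle\mathbf{M}(T,\a(T))\xi,\xi\rangle+\int_t^T\langle\boldsymbol{N}(s,\a(s))\xi,\xi\rangle\,ds$ with explicit matrices $\mathbf{M},\boldsymbol{N}$ built from $(\BX,\BU)$, and then invokes the Markov property of $\a$ together with the independence of $W$ and $\a$ to argue that the conditional expectation $\dbE\big[\mathbf{M}(T,\a(T))+\int_t^T\boldsymbol{N}(s,\a(s))\,ds\,\big|\,\sF_t\big]$ factors as $P(t,\a(t))$ for an $\dbF$-adapted map $P$; this factoring step is worked out only in a toy linear case and then asserted in general. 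You instead bypass the conditional-expectation computation entirely by appealing to \autoref{21.9.3.3}, which hands you $V(t,\xi,\vf)=\langle\BY(t)\xi,\xi\rangle$ directly, and then you manufacture $P(t,i)$ state by state by running the problem from the deterministic initial regime $i$ and glue the pieces on the partition $\{\vf=i\}_{i\in\cS}$ via a pasting/locality argument. Your route is more constructive and makes the finite-state dependence of $P$ explicit; the price is that it presupposes a flow-type realization of the chain (so that the regime path issued from $\vf$ and the one issued from the constant $i$ literally coincide on $\{\vf=i\}$, not merely in law), whereas the paper's fixed-chain setup hides this point inside the Markov-property invocation. Conversely, the paper's approach yields an explicit integral formula for $P(t,\a(t))$ that is reused downstream, while yours delivers $P$ only through the adjoint data $\BY(t)$.
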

			\begin{theorem}\label{21.8.22.4} \sl
				Let conditions {\rm (\textbf{H})} and \rf{21.8.21.3.2} hold.
				Then the process $P=\{P(t,i); (t,i)\in[0,T]\ts \cS\}$
				appeared in \rf{21.8.21.6} is bounded and left-continuous.
			\end{theorem}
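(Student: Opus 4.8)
The plan is to handle boundedness and left-continuity separately; both rest on sharp a priori estimates for the value flow $V$ together with the time-consistency of \autoref{Corollary4.5}, and both use the representation \rf{21.8.21.6}.

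\textbf{Boundedness.} Fix $(t,i)$ and a deterministic $x\in\dbR^n$. Taking $u\equiv0$ turns \rf{state} into a homogeneous linear SDE, so the standard moment estimate under {\rm(\textbf{H})} gives $\dbE_t\big[\sup_{s\in[t,T]}|X(s)|^2\big]\le C|x|^2$ a.s., with $C$ depending only on $T$ and the $L^\i$-bounds of $A,B,C,D$; combined with the boundedness of $G,Q$ this yields $V(t,x,i)\le J(t,x,i;0)\le C|x|^2$ a.s. For the lower bound I first pass from \autoref{21.8.21.4} to its conditional form: replacing $u(\cd)$ by $u(\cd)\mathbf 1_A$ with $A\in\sF_t$ and using the linearity of \rf{state} with zero initial state, one gets $J(t,0,i;u(\cd))\ges\e\,\dbE_t\int_t^T|u(s)|^2ds$ a.s. Decomposing the state with data $(x,i)$ and control $u$ as the sum of the zero-control state $X^0$ and the zero-initial-state state $X^u$ gives $J(t,x,i;u)=J(t,x,i;0)+2\cL_t(x,u)+J(t,0,i;u)$ with $|\cL_t(x,u)|\le C|x|\big(\dbE_t\int_t^T|u|^2ds\big)^{1/2}$, so minimizing $-2C|x|a+\e a^2$ over $a\ges0$ yields $V(t,x,i)\ges J(t,x,i;0)-C^2|x|^2/\e\ges-K|x|^2$ a.s. Since $P(t,i)$ is $\sF_t$-measurable and symmetric, $\|P(t,i)\|=\sup_{|x|=1}|\lan P(t,i)x,x\ran|=\sup_{|x|=1}|V(t,x,i)|\le K$ a.s. (restricting $x$ to a countable dense set), uniformly in $(t,i)$.

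\textbf{Reduction of left-continuity.} By polarization in \rf{21.8.21.6}, $\lan P(t,i)e_j,e_k\ran=\frac12\big(V(t,e_j+e_k,i)-V(t,e_j,i)-V(t,e_k,i)\big)$, so it suffices to prove that for each fixed $x\in\dbR^n$ and $i\in\cS$ the process $t\mapsto V(t,x,i)$ is left-continuous, i.e. $V(s,x,i)\to V(t_0,x,i)$ as $s\ua t_0$. I would squeeze $V(s,x,i)$ between two expressions coming from the dynamic programming identity $V(t,\xi,\vf)=\dbE_t\big[\int_t^\t\ell\,dr+V(\t,X(\t),\a(\t))\big]$, valid along any trajectory whose control is optimal on $[\t,T]$ (a consequence of \autoref{Corollary4.5} and the tower property), where $\ell$ denotes the running integrand of \rf{cost}. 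Using the control equal to $0$ on $[s,t_0)$ and optimal for $(t_0,X^0(t_0),\a(t_0))$ on $[t_0,T]$,
\begin{align*}
V(s,x,i)\le\dbE_s\Big[\int_s^{t_0}\big\lan Q(r,\a(r))X^0(r),X^0(r)\big\ran dr+V\big(t_0,X^0(t_0),\a(t_0)\big)\Big],
\end{align*}
while, letting $u^{*,s}$ be the optimal control for $(s,x,i)$ (which exists by \autoref{21.8.21.4}) and $X^{*,s}$ the corresponding state, \autoref{Corollary4.5} gives
\begin{align*}
V(s,x,i)=\dbE_s\Big[\int_s^{t_0}\ell\big(r,\a(r),X^{*,s}(r),u^{*,s}(r)\big)dr+V\big(t_0,X^{*,s}(t_0),\a(t_0)\big)\Big].
\end{align*}

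\textbf{The limit and the main obstacle.} As $s\ua t_0$ the integral over $[s,t_0]$ in the first inequality vanishes in $L^1$ (its integrand is $L^1$-bounded by the a priori estimates and $\dbE_s\int_s^{t_0}|X^0(r)|^2dr\le C(t_0-s)|x|^2$), $X^0(t_0)\to x$ in $L^2$, and, since $\a(\cd)$ is a Markov chain with bounded intensities, $\dbP\big(\a(t_0)\ne i\,\big|\,\a(s)=i\big)\to0$; combining these with $\|P(t_0,\cd)\|\le K$ from the first part, the Lipschitz bound $|V(t_0,y,j)-V(t_0,x,j)|\le K|y-x|(|y|+|x|)$, and Lévy's theorem to dispose of the conditioning $\dbE_s$, one obtains $\limsup_{s\ua t_0}V(s,x,i)\le V(t_0,x,i)$. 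The reverse inequality is the delicate step: $u^{*,s}$ is controlled only through the uniform bound $\dbE_s\int_s^T|u^{*,s}|^2dr\le C|x|^2$, so $X^{*,s}(t_0)$ need not converge to $x$ and the second display cannot be handled by a naive Taylor expansion at $x$. I would instead use the quadratic lower bound $V(t_0,y,j)\ges\lan P(t_0,j)x,x\ran+2\lan P(t_0,j)x,y-x\ran-K|y-x|^2$ (valid since $P(t_0,j)\ges-KI_n$), estimate $\dbE_s|X^{*,s}(t_0)-x|^2\le C\,\dbE_s\int_s^{t_0}\big(|X^{*,s}|^2+|u^{*,s}|^2\big)dr$, and then close the argument by showing that the optimal control has vanishing energy near the starting time, $\dbE_s\int_s^{t_0}|u^{*,s}(r)|^2dr\to0$ as $s\ua t_0$ — which I expect to follow from the DPP martingale identity $\dbE_s\big[V(r,X^{*,s}(r),\a(r))\big]+\dbE_s\int_s^r\ell\,dr'=V(s,x,i)$ on $[s,t_0]$ together with the $\e$-coercivity of \autoref{21.8.21.4} (equivalently, from a stability estimate for the FBSDE \rf{22.6.25.2} with respect to its initial time). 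Establishing this energy decay is the technical heart of the proof; once it is available the two one-sided bounds match, giving left-continuity of $t\mapsto V(t,x,i)$ and hence of $t\mapsto P(t,i)$.
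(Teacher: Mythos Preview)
Your boundedness argument is correct and is essentially the paper's, recast in more concrete terms: the paper obtains the same sandwich $M(t)-\e^{-1}KI_n\le P(t,\vf)\le M(t)$ from the Hilbert-space representation $V_0(t,\xi,\vf)=\dbE\lan M(t)\xi,\xi\ran-[[\cN_t^{-1}\cL_t\xi,\cL_t\xi]]$ (\autoref{Theorem3.4} and \autoref{Corollary3.5}) together with the bound $[[\cL_t\xi,\cL_t\xi]]\le K\dbE|\xi|^2$ of \autoref{Proposition3.2}, which is precisely your decomposition $J=J(t,x,i;0)+2\cL_t(x,u)+J(t,0,i;u)$ followed by Cauchy--Schwarz and minimisation.

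For left-continuity, however, your route is genuinely different from the paper's, and the gap you yourself flag --- the energy decay $\dbE_s\int_s^{t_0}|u^{*,s}(r)|^2\,dr\to0$ --- is not a minor technicality but the whole difficulty, and nothing in your outline supplies it. The optimal control is $u^{*,s}=-\cN_s^{-1}\cL_s x$, and while $\|\cN_s^{-1}\|\le\e^{-1}$ gives a uniform bound on $\|u^{*,s}\|_{L^2[s,T]}$, one has no direct control on where that mass sits inside $[s,T]$; in particular the DPP identity you invoke only bounds $\dbE_s\int_s^{t_0}\ell\,dr$ in terms of itself and cannot by itself force the running-cost contribution to vanish.

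The paper sidesteps this issue entirely. The key ingredient is \autoref{Remark3.3}: one has $[\cL_t\xi](s)=F_0\big(s,\a(s),\cX(s),\cY(s),\cZ(s)\big)\cX^{-1}(t)\xi$, where $(\cX,\cY,\cZ)$ solves a \emph{single} FBSDE on $[0,T]$ that does not depend on the starting time $t$. Plugging this into the sandwich already obtained for boundedness yields, after stripping the arbitrary $\xi$,
\[
M(t)\ -\ \e^{-1}\big[\cX^{-1}(t)\big]^{\top}\,\dbE_t\Big[\int_t^T F_0^{\top}F_0\,ds\Big]\,\cX^{-1}(t)\ \le\ P(t,i)\ \le\ M(t),
\]
and left-continuity at $T$ follows immediately from dominated convergence (intermediate times by the same argument for the stopped problem). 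The point is that the $t$-dependence of the lower bound is now \emph{explicit} --- through $\cX^{-1}(t)$ and the lower limit of integration --- so no regularity of the optimal control is needed. Your DPP approach, by contrast, buries the $t$-dependence inside the implicitly defined $u^{*,s}$, which is why you are forced to prove a statement about it that you cannot close.
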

			
			%\tb{We will obtain a quadratic representation of $\widehat{V}(t,\xi,\vf)$ in terms of $P=\{P(t,i)\}_{(t,i)\in[0,T]\ts\cS}$, who is a bounded and left continuous $\mathbb{S}^{n}$-valued process. % with left continuous. %In the next section, on one hand, we will see that in fact $P=\{P(t,i) \}_{(t,i)\in[0,T]\ts\cS}$ is actually continuous.
				%	On the other hand, the process $P=\{P(t,i)\}_{(t,i)\in[0,T]\ts\cS}$, together with $\Lambda=\{\Lambda(t) \}_{t\in[0,T]}$ and $\zeta=\{\zeta(t) \}_{t\in[0,T]}$,  satisfies a stochastic Riccati equation.}
			%
			% \begin{remark}\label{remark3.11}\rm
				% Up to now, for Problem (M-SLQ), a lot of results are proved when the time horizon is a deterministic interval $[t,T]$. In fact, when the deterministic interval $[t,T]$ evolves to the stochastic interval $[\sigma, \tau]$ with $\sigma \leqslant \tau$ are finite $\mathbb{F}$-stopping times,
				% %
				% all the previous results still hold by using the same discussion as before. A similar discussion can be found in \cite{Chen-Yong-01,Sun-Xiong-Yong-21}.
				% \end{remark}
			%
			%
			Finally, based on \autoref{21.8.21.5} and \autoref{21.8.22.4}, we introduce a stopped stochastic linear-quadratic problem and present some results for it, which is useful to the proof of \autoref{21.9.3.1} too.  Recall that $\tau\in\cT(0,T]$ is an $\mathbb{F}$-stopping time, and set
			$$\mathcal{D}^{\tau}\deq\big\{(\sigma,\xi,\vartheta)\ |\ \sigma \in \mathcal{T}[0, \tau),\ \xi\in L^2_{\sF_{\sigma}}(\Omega;\dbR^n),\ \vartheta\in L_{\sF_\sigma^\alpha}^2(\Omega;\mathcal{S})\big\}.$$	
			%with $\mathcal{U}[\sigma, \tau]\deq L_{\mathbb{F}}^{2}\left(\sigma, \tau ; \mathbb{R}^{m}\right)$ for $\sigma \in \mathcal{T}[0, \tau)$.%
			%The following purpose is to prove that $P=\{P(t,i)\}_{(t,i)\in[0,T]\ts\cS}$ is a left-continuous and bounded process.

			\noindent\bf Problem (M-SLQ)$^\t$:  \rm
			For any given initial triple $(\sigma, \xi,\vartheta) \in \mathcal{D}^{\tau}$,
			find a control $u^{*}(\cd) \in \mathcal{U}[\sigma, \tau]$ such that
			$$J^\t(\sigma,\xi,\vartheta;u^{*}(\cdot))
			=\essinf_{u(\cd)\in\cU[\sigma, \tau]}J^\t(\sigma,\xi,\vartheta;u(\cdot))
			\deq V^\t(\sigma,\xi,\vartheta),$$
			where the cost functional
			{\small$$\label{eq:Jtau}
				\ds {J}^\t(\sigma,\xi,\vf;u(\cd))\deq\dbE_{\sigma}\left[\big\lan P(\tau,\a(\t)) X(\tau), X(\tau)\big\ran
				+\int_{\sigma}^{\tau}\llan\begin{pmatrix}Q(s,\a(s))&S(s,\a(s))^\top\\S(s,\a(s))&R(s,\a(s))\end{pmatrix}
				\begin{pmatrix}X(s)\\ u(s)\end{pmatrix},
				\begin{pmatrix}X(s)\\u(s)\end{pmatrix}\rran ds\right],
				$$}
			and $X(\cd)$ is the solution of \rf{state} w.r.t. the initial triple $(\sigma, \xi,\vartheta)$ over the stochastic interval $[\sigma, \tau]$.
			
			\ms
			
			Similar to \autoref{prob-expectation}, when the above conditional expectation $\dbE_\sigma[\cdot]$  degenerates to the expectation $\dbE$, we denote the related problem, cost functional, and value function by {\bf Problem (M-SLQ)$^\t_0$}, $J^\t_0(t,\xi,\vf;u(\cd))$, and $V^\t_0(t,\xi,\vf)$, respectively. 			

				\begin{proposition}\label{21.8.31.3} \sl
					Let conditions {\rm (\textbf{H})} and \rf{21.8.21.3.2} hold. Then
					%					%\begin{itemize}
					\begin{itemize}
						\item [{\rm (i)}] Problem (M-SLQ)$^\t$ is uniquely solvable at every $\sigma \in \mathcal{T}[0, \tau)$.
						\item [{\rm (ii)}] If $u^{*}(\cd) \in \mathcal{U}[\sigma, T]$ is an open-loop optimal control of Problem (M-SLQ) w.r.t. $(\sigma, \xi,\vf) \in \mathcal{T}[0, \tau) \times L_{\sF_{\sigma}}^{\infty}\left(\Omega ; \mathbb{R}^{n}\right)\ts L_{\sF_{\sigma}^\a}^{2}\left(\Omega ; \cS\right)$,
						then the restriction $\left.u^{*}(\cd)\right|_{[\sigma, \tau]}$ of $u^{*}(\cd)$ on the interval $[\sigma, \tau]$ is also an open-loop optimal control of Problem (M-SLQ)$^\t$ w.r.t. the same initial triple $(\sigma, \xi,\vf)$.
						\item [{\rm (iii)}] The stochastic value flow $V^\t(\cd)$ of Problem (M-SLQ)$^\t$ has the following form:
						$$V^\t(\sigma, \xi,\vf)=\langle P(\sigma,\vf) \xi, \xi\rangle, \quad
						\forall(\sigma, \xi,\vf) \in \mathcal{T}[0, \tau) \times L_{\sF_{\sigma}}^{\infty}\left(\Omega ; \mathbb{R}^{n}\right)\ts L_{\sF_{\sigma}^\a}^{2}\left(\Omega ; \cS\right),$$%
						where $P(\cd,\a(\cd))$ is the process appeared in \rf{21.8.21.6}.
					\end{itemize}
					%					%
				\end{proposition}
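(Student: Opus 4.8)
The plan is to view Problem (M-SLQ)$^\t$ as the truncation at $\t$ of Problem (M-SLQ) and to run a dynamic-programming (pasting) argument. The structural fact I would exploit is that, by \autoref{21.8.21.5} and \autoref{21.8.22.4}, the terminal weight $P(\t,\a(\t))$ in the cost $J^\t$ is precisely (a version of) the stochastic value flow $V(\t,\cd,\a(\t))$ of Problem (M-SLQ) restarted at $\t$; hence minimizing on $[\sigma,\t]$ with terminal cost $\lan P(\t,\a(\t))X(\t),X(\t)\ran$ ought to coincide with minimizing on all of $[\sigma,T]$. The ingredients are the unique solvability and the time-consistency of the optimal control of Problem (M-SLQ) (\autoref{21.8.21.4}, \autoref{Corollary4.5}, \autoref{21.9.3.3}), together with a preliminary extension of the identity $V(t,\xi,\vf)=\lan P(t,\vf)\xi,\xi\ran$ from deterministic $t$ to the stopping times $\sigma,\t$ and to $L^2_{\sF}$-valued initial states, which I would obtain by approximating $\t$ from the right by discretely-valued stopping times and invoking the boundedness and left-continuity of $P(\cd,i)$.

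For (iii) and (ii): fix $(\sigma,\xi,\vf)\in\cD^\t$ and let $u^*(\cd)\in\cU[\sigma,T]$ be the unique open-loop optimal control of Problem (M-SLQ) w.r.t.\ $(\sigma,\xi,\vf)$, with optimal state $X^*(\cd)$. Writing $\ell(s)$ for the running-cost integrand of \rf{cost} along the relevant state--control pair, I would split the integral in \rf{cost} at $\t$ and use the tower property $\dbE_\sigma[\cd]=\dbE_\sigma\dbE_\t[\cd]$ to get, for any $u(\cd)\in\cU[\sigma,T]$ with state $X^u(\cd)$,
\[
J(\sigma,\xi,\vf;u(\cd))=\dbE_\sigma\Big[\int_\sigma^\t \ell(s)\,ds+J\big(\t,X^u(\t),\a(\t);u(\cd)|_{[\t,T]}\big)\Big]\ges J^\t\big(\sigma,\xi,\vf;u(\cd)|_{[\sigma,\t]}\big),
\]
where the inequality bounds the inner cost $J(\t,X^u(\t),\a(\t);u(\cd)|_{[\t,T]})$ from below by $V(\t,X^u(\t),\a(\t))=\lan P(\t,\a(\t))X^u(\t),X^u(\t)\ran$ (definition of the value flow and the extended \autoref{21.8.21.5}). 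By \autoref{Corollary4.5} the restriction $u^*(\cd)|_{[\t,T]}$ is optimal for $(\t,X^*(\t),\a(\t))$, so \autoref{21.9.3.3} turns the displayed inequality into an equality at $u(\cd)=u^*(\cd)$; hence $J^\t(\sigma,\xi,\vf;u^*(\cd)|_{[\sigma,\t]})=J(\sigma,\xi,\vf;u^*(\cd))=V(\sigma,\xi,\vf)=\lan P(\sigma,\vf)\xi,\xi\ran$. Conversely, for $v(\cd)\in\cU[\sigma,\t]$ with state $X^v(\cd)$ on $[\sigma,\t]$, I would pick an optimal control $\bar v(\cd)\in\cU[\t,T]$ of Problem (M-SLQ) w.r.t.\ $(\t,X^v(\t),\a(\t))$ (existing by \autoref{21.8.21.4}; a concrete choice is $\BU(\cd)X^v(\t)$ in the notation of \autoref{21.8.21.2}, first for $L^\i$ and then for $L^2$ initial states by truncation), concatenate $w(\cd):=v(\cd)$ on $[\sigma,\t]$ and $w(\cd):=\bar v(\cd)$ on $[\t,T]$, and observe by the same splitting that $J(\sigma,\xi,\vf;w(\cd))=J^\t(\sigma,\xi,\vf;v(\cd))$, whence $J^\t(\sigma,\xi,\vf;v(\cd))\ges V(\sigma,\xi,\vf)=J^\t(\sigma,\xi,\vf;u^*(\cd)|_{[\sigma,\t]})$. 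This yields (ii) (the restriction $u^*(\cd)|_{[\sigma,\t]}$ is an open-loop optimal control of Problem (M-SLQ)$^\t$) and (iii) ($V^\t(\sigma,\xi,\vf)=\lan P(\sigma,\vf)\xi,\xi\ran$).

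For (i): existence of an optimal control of Problem (M-SLQ)$^\t$ at each $\sigma\in\cT[0,\t)$ already comes from (ii). For uniqueness, I would run the same concatenation with $\xi=0$: given $w(\cd)\in\cU[\sigma,\t]$ one builds $\tilde w(\cd)\in\cU[\sigma,T]$, equal to $w(\cd)$ on $[\sigma,\t]$, with $J_0^\t(\sigma,0,\a(\sigma);w(\cd))=J_0(\sigma,0,\a(\sigma);\tilde w(\cd))\ges\e\,\dbE\int_\sigma^T|\tilde w(s)|^2ds\ges\e\,\dbE\int_\sigma^\t|w(s)|^2ds$, the first inequality being \autoref{21.8.21.4}. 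Thus the quadratic functional $w(\cd)\mapsto J_0^\t(\sigma,0,\a(\sigma);w(\cd))$ is uniformly convex, so Problem (M-SLQ)$^\t_0$ has a unique optimal control; and since every open-loop optimal control of the conditional Problem (M-SLQ)$^\t$ is, on taking expectation, optimal for Problem (M-SLQ)$^\t_0$, the open-loop optimal control of Problem (M-SLQ)$^\t$ is unique. (Alternatively, one could prove a conditional-versus-unconditional equivalence on $[\sigma,\t]$ paralleling \autoref{Theorem4.2} and read off (i).)

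The hard part is the preliminary extension of \autoref{21.8.21.5}--\autoref{21.8.22.4} to the stopping times $\sigma,\t$ and to $L^2_{\sF_\t}(\Omega;\dbR^n)$-valued initial states: $P(\cd,i)$ is only known to be bounded and left-continuous, not continuous, yet the pasting has to be performed with the random state $X^v(\t)$. The companion technicality is the concatenation of admissible controls across $\t$ together with the adaptedness/measurability of the optimal continuation from $(\t,X^v(\t),\a(\t))$, which I would handle through the matrix process $\BU(\cd)$ of \autoref{21.8.21.2} and an $L^\i$-to-$L^2$ density argument. Everything else is the routine splitting of \rf{cost} by the tower property and conditional-versus-unconditional bookkeeping.
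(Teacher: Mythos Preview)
Your approach is essentially the same as the paper's: both argue by pasting controls at $\t$, use the identity $V(\t,X(\t),\a(\t))=\lan P(\t,\a(\t))X(\t),X(\t)\ran$ as the terminal weight, invoke the time-consistency of Problem (M-SLQ) (\autoref{Corollary4.5}) for the equality at the optimal control, and derive uniform convexity of $J_0^\t(\sigma,0,\a(\sigma);\cd)$ from \autoref{21.8.21.4} via concatenation with an optimal tail. The paper organizes the computation slightly differently---it first proves the identity $J_0(\sigma,\xi,\vf;u\oplus v^*)=J_0^\t(\sigma,\xi,\vf;u)$ for an arbitrary $u$ glued to an optimal continuation $v^*$, then reads off both the uniform convexity (taking $\xi=0$) and the lower bound $J^\t(\sigma,\xi,\vf;u)\ges\lan P(\sigma,\vf)\xi,\xi\ran$---but the content is the same.

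One small correction: in your ``preliminary extension'' step you propose to approximate $\t$ \emph{from the right} and then invoke the \emph{left}-continuity of $P(\cd,i)$. Those do not match: a sequence $\t_n\downarrow\t$ requires right-continuity of $P(\cd,i)$ to pass to the limit, which is not what \autoref{21.8.22.4} gives. The paper does not attempt such an approximation; it simply applies the value-flow identity at the stopping time $\t$ directly (the construction in the proof of \autoref{21.8.21.5} goes through verbatim with $t$ replaced by a stopping time, since the optimal pair $(\BX,\BU)$ and the conditional expectation in \rf{22.7.11.2} make sense on $[\t,T]$). You can drop the approximation argument and proceed the same way.
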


				\section{Proof of the solvability of SRE}\label{ProofSRE}
				
				In this section, based on the auxiliary results of \autoref{Sec3.2}, we prove the solvability of SRE \rf{21.8.23.6}.
				First, we prove that SRE \rf{21.8.23.6} is uniquely solvable, and then prove that the first component $\hat P(\cd,\a(\cd))$ of SRE \rf{21.8.23.6} is exactly the process $P(\cd,\a(\cd))$ appeared in \rf{21.8.21.6}. Finally, as a by-product, the open-loop optimal control is represented as linear feedback of the state.

				\ms
				
				We will use several lemmas to prove \autoref{21.9.3.1}. Recall that SDE \rf{state} is the state equation, BSDE \rf{21.8.20.1} is the associated adjoint equation, and $\{e_{j}\}_{j=1}^{n}$ is the standard basis of $\mathbb{R}^{n}$.

				\begin{lemma}\label{21.9.2.1} \sl
					Suppose the conditions {\rm (\textbf{H})} and \rf{21.8.21.3.2} hold.  Denote by $\{X_{j}(s)\}_{s\in [0,T]}$  the (unique) open-loop optimal state process corresponding to the initial triple $(t,\xi,\vf)=(0, e_{j},i_0)$.
					Then the $\mathbb{R}^{n \times n}$-valued process $\{\boldsymbol{X}(s) \}_{s\in[0,T]}$ with $\boldsymbol{X}(s)\deq\left(X_{1}(s), \ldots, X_{n}(s)\right)$ is invertible.
				\end{lemma}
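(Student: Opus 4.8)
The plan is to show that $\boldsymbol{X}(s)$ is invertible for every $s\in[0,T]$ by combining three ingredients: the linear structure of the state equation, the uniform convexity condition \rf{21.8.21.3.2} (via \autoref{21.8.21.4}, which propagates the uniform convexity to every stopping-time initial time), and the fact that an optimal control is a \emph{linear} feedback of the initial state, established in \autoref{21.8.21.2}. The essential observation is that, by \autoref{21.8.21.2} applied at the initial triple $(0,e_j,i_0)$, the pair $(\BX\xi,\BU\xi)$ is optimal with respect to $(0,\xi,i_0)$ for every $\xi\in L^\infty_{\sF_0}(\Omega;\dbR^n)$; in particular $\BX(s)\xi$ is the optimal state process starting from $\xi$. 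I would then argue by contradiction: if $\boldsymbol{X}(\tau)$ fails to be invertible on a set of positive probability for some (stopping) time $\tau$, there is a nonzero $\dbR^n$-valued $\sF_\tau$-measurable random variable $\xi$ with $\BX(\tau)\xi=0$ on that set, and I would use this to manufacture an initial condition whose optimal cost violates the uniform-convexity lower bound.

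The key steps, in order, are as follows. First I would fix $s_0\in[0,T]$ and set $\tau\equiv s_0$ (a deterministic time, hence a trivial stopping time), noting that by the time-consistency statement \autoref{Corollary4.5} (and \autoref{21.8.31.3}) the restriction of the optimal control to $[s_0,T]$ is optimal for the corresponding problem started at $s_0$. Second, suppose for contradiction that $\boldsymbol{X}(s_0)$ is not a.s. invertible; then the $\sF_{s_0}$-measurable event $\Lambda\deq\{\det\boldsymbol{X}(s_0)=0\}$ has $\dbP(\Lambda)>0$, and on $\Lambda$ I can choose (measurably, e.g.\ via a measurable selection of a unit vector in $\ker\boldsymbol{X}(s_0)$) a unit vector $\eta\in\dbR^n$ with $\boldsymbol{X}(s_0)\eta=0$; extend $\eta$ by $0$ off $\Lambda$ to get $\xi\in L^\infty_{\sF_{s_0}}(\Omega;\dbR^n)$, which is nonzero. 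Third, by \autoref{21.8.21.2} the optimal state process for the problem on $[s_0,T]$ started from $\xi$ at time $s_0$ is $s\mapsto\boldsymbol{X}(s)\boldsymbol{X}(s_0)^{+}\cdots$ — more precisely, since $\boldsymbol{X}(s)\xi$ with $\BX(s_0)\xi=\xi$ requires care, I would instead run the argument at the \emph{initial} time $0$: take $\xi$ to be an $\sF_0$-measurable unit vector in $\ker\boldsymbol{X}(s_0)$ on the event $\Lambda=\{\omega:\boldsymbol{X}(s_0,\omega)\text{ is singular}\}$, so that the optimal state process starting from $(0,\xi,i_0)$ is $\BX(\cdot)\xi$, which vanishes at time $s_0$ on $\Lambda$. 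Fourth, I would then invoke \autoref{Corollary4.5}: the restriction $\BU(\cdot)\xi|_{[s_0,T]}$ is optimal for the problem started at $(s_0,\BX(s_0)\xi,\alpha(s_0))=(s_0,0,\alpha(s_0))$ on $\Lambda$, and the optimal control from a zero initial state of the homogeneous system is identically $0$ by uniqueness (\autoref{21.8.21.4} gives unique solvability), so $\BU(s)\xi=0$ for a.e.\ $s\in[s_0,T]$ on $\Lambda$. Fifth, uniqueness of the optimal control on $[0,T]$ together with $\BX(s_0)\xi=0$, $\BU|_{[s_0,T]}\xi=0$ forces (by running the state equation backward, or by the linear-feedback form $u^*(s)=\Theta(s,\alpha(s))X^*(s)$ from \autoref{21.11.26.1}) $\BX(s)\xi=0$ and $\BU(s)\xi=0$ on $[0,T]$ on $\Lambda$, contradicting $\BX(0)\xi=\xi\neq0$ on $\Lambda$.

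The step I expect to be the main obstacle is the fourth one — carefully propagating ``optimal state hits zero at $s_0$'' to ``control is zero on $[s_0,T]$'' and then back to ``$\xi=0$''. The subtlety is that one must work with the \emph{homogeneous} system (zero control keeps a zero state zero), so the argument hinges on showing the optimal control associated with a zero initial state is the zero control; this is where uniform convexity \rf{21.8.21.3.2}, propagated to $s_0$ by \autoref{21.8.21.4}, is indispensable, because it yields $J_0(s_0,0,\alpha(s_0);u(\cdot))\ge\e\,\dbE\int_{s_0}^T|u(s)|^2ds$, forcing the optimizer $u^*\equiv0$. A secondary technical point is the measurable selection of the kernel vector $\eta$ and the verification that $\xi$ so constructed lies in $L^\infty_{\sF_0}$ — but this is routine given that $\boldsymbol{X}(s_0)$ is $\sF_{s_0}$-measurable, or can be sidestepped by first localizing on a partition of $\Lambda$ on which $\ker\boldsymbol{X}(s_0)$ has constant dimension and choosing $\eta$ from a fixed finite spanning set. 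Alternatively, a cleaner route avoids the contradiction entirely: define $\boldsymbol{Y}(s)\deq P(s,\alpha(s))\boldsymbol{X}(s)$ and show $\langle\boldsymbol{Y}(s)\xi,\boldsymbol{X}(s)\xi\rangle$ is the optimal value from $(s,\boldsymbol{X}(s)\xi,\alpha(s))$, bounded below by $\e\,\dbE\int_s^T|\BU(r)\xi|^2dr$; if $\boldsymbol{X}(s_0)$ were singular one again gets a nonzero $\xi$ in its kernel producing a zero optimal state and control past $s_0$, and then the forward equation on $[0,s_0]$, being linear with $\BU\xi$ determined and square-integrable, cannot have carried $\xi\ne0$ at time $0$ into $0$ at $s_0$ unless the whole trajectory degenerates — which is the same contradiction. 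I would present the contradiction argument as the primary proof.
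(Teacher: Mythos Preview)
Your proposal has a genuine gap at the measurability step. You want to ``take $\xi$ to be an $\sF_0$-measurable unit vector in $\ker\boldsymbol{X}(s_0)$ on the event $\Lambda$'', but $\ker\boldsymbol{X}(s_0)$ is $\sF_{s_0}$-measurable, not $\sF_0$-measurable (and $\sF_0$ is essentially trivial here). There is in general no deterministic vector lying in a random kernel, so the construction of $\xi$ fails. Your earlier variant --- selecting an $\sF_{s_0}$-measurable $\eta\in\ker\boldsymbol{X}(s_0)$ --- is legitimate as a selection, but then $\eta$ is not an admissible initial state at time $0$, and \autoref{21.8.21.2} only tells you that $(\BX\xi,\BU\xi)$ is optimal from $(0,\xi,i_0)$ for $\xi\in L^\infty_{\sF_0}$; it gives no direct handle on an $\sF_{s_0}$-measurable kernel vector.

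The second gap is in step five/seven. Knowing $\BX(s_0)\xi=0$ on $\Lambda$ and that the optimal control from zero state is zero tells you $\BU(\cdot)\xi=0$ on $[s_0,T]$; it says nothing about $\BU(\cdot)\xi$ on $[0,s_0]$, and a linear controlled SDE can certainly drive a nonzero $\xi$ to $0$ at time $s_0$ with a suitable control. You then appeal to the feedback form $u^*=\Theta X^*$ from \autoref{21.11.26.1}, but that theorem is proved \emph{using} this lemma (via the solvability of the SRE), so invoking it here is circular.

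The paper's route is different and avoids both issues. It defines $\theta$ as the \emph{first} time $\det\BX$ vanishes, sets $\tau=\theta\wedge T$, and chooses an $\sF_\tau$-measurable positive semi-definite matrix $H$ with $H\BX(\tau)=0$ and $|H|=1$ on $\{\theta\le T\}$. It then compares the stopped problem (M-SLQ)$^\tau$ with the perturbed problem obtained by adding $\langle HX(\tau),X(\tau)\rangle$ to the terminal cost. Because $H\BX(\tau)=0$, the restriction $u_j|_{[0,\tau]}$ is optimal for \emph{both} problems, and since $\BX$ is invertible on $[0,\tau)$ one deduces that the two value-flow matrices $P$ and $P_H$ coincide on $[0,\tau)$. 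But $P_H(\tau,\cdot)=P(\tau,\cdot)+H$, and both are left-continuous (\autoref{21.8.22.4}); letting $t\uparrow\tau$ yields $H=0$, a contradiction. The key machinery is \autoref{21.8.21.5}, \autoref{21.8.22.4}, and \autoref{21.8.31.3}, not backward propagation of the state.
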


				\begin{proof}
					%%
					%For $j=1,...,n$, note that $X_{j}(\cd)=\{X_{j}(s)\}_{s\in [0,T]}$ denotes the (unique) open-loop optimal state process corresponding to the initial triple $(t,\xi,\vf)=(0, e_{j},i_0)$, and $\boldsymbol{X}(s)\deq\left(X_{1}(s), \ldots, X_{n}(s)\right)$.
					%%
					Let $u_{j}(\cd) \in \mathcal{U}[0, T]$ be the unique open-loop optimal control w.r.t.  $(0, e_{j},i_0)$ so that
					\begin{equation*}
						\hspace{-0.38cm}\left\{\begin{aligned}
							\ds &dX_j(s)=\big[A(s,\a(s))X_j(s)+B(s,\a(s))u_j(s)\big]ds \\
							\ns\ds &\qq\qq +\big[C(s,\a(s))X_j(s)+D(s,\a(s))u_j(s)\big]dW(s), \q s\in[0,T],\\
							\ns\ds & \ds X(0)=e_j,\q\a(0)=i_0.
						\end{aligned}\right.
					\end{equation*}
					Then, with $\boldsymbol{U}(s)=\left(u_{1}(s), \ldots, u_{n}(s)\right)$, one has
					%									%
					\bel{21.9.4.1}\hspace{-0.38cm}\left\{\begin{aligned}
						\ds &d\BX(s)=\big[A(s,\a(s))\BX(s)+B(s,\a(s))\BU(s)\big]ds\\
						%									%
						\ns\ds &\qq\q\ \ +\big[C(s,\a(s))\BX(s)+D(s,\a(s))\BU(s)\big]dW(s),\q s\in[0,T],\\
						%									%
						\ns\ds & \BX(0)=I_n,\q\a(0)=i_0.
						%									%
					\end{aligned}\right.\ee
					%									%
					We define the following stopping time (at which, for the first time, $\BX(\cd)$ is not invertible)
					$$\theta(\omega)=\inf \big\{s \in[0, T] ; \operatorname{det}(\boldsymbol{X}(s, \omega))=0\big\},$$
					where we use the convention that the infimum of the empty set is infinity.
					It should be pointed out that if we prove that  $\mathbb{P}(\theta=\infty)=1$, i.e., the set
					$\mathbb{O}\deq \{\omega \in \Omega : \theta(\omega) \leqslant T\}$
					has probability zero, then we can get that $X(\cd)$ is invertible.
					
					\ms
					
					Suppose the contrary and set $\tau=\theta \wedge T$. Then $\tau$ is in $\cT(0,T]$ too. When $\tau=\theta$ on $\mathbb{O}$, we know that $\boldsymbol{X}(\tau)$ is not invertible on $\mathbb{O}$ by the definition of $\theta$. Thus, we can choose an $\sF_{\tau}$-measurable, $\mathbb{S}^{n}$-valued, positive semi-definite random matrix $H$ with $|H|=1$ on $\mathbb{O}$ such that
					$H(\omega) \boldsymbol{X}(\tau(\omega), \omega)=0$ for any $\omega \in \Omega$.
					
					\ms
					
					Note that $P=\{P(t,i); (t,i)\in[0,T]\ts \cS\}$
					is bounded, left-continuous, and satisfies \rf{21.8.21.6} according to \autoref{21.8.21.5} and \autoref{21.8.22.4}.
					For $\sigma\in\cT[0,\t]$ with $s\in[\sigma,\t]$, we consider the following equation
					\bel{21.8.22.1}\left\{\ba{ll}
					%				%
					\ds dX(s)=\big[A(s,\a(s))X(s)+B(s,\a(s))u(s)\big]ds
					+\big[C(s,\a(s))X(s)+D(s,\a(s))u(s)\big]dW(s),\\
					\ns\ds X(\sigma) =\xi,\q \a(\sigma)=\vf,
					\ea\right.\ee
					and the following auxiliary cost functional
					\begin{equation}\label{21.8.31.2}
						J^H(\sigma, \xi,\vf; u(\cd)) \triangleq J^\t(\sigma, \xi,\vf; u(\cd))
						+\mathbb{E}_{\sigma}\big[\langle H X(\tau), X(\tau)\rangle\big].
					\end{equation}
					%									%
					Consider the problem of minimizing the cost functional \rf{21.8.31.2} subject to the state equation \rf{21.8.22.1}, called Problem ${(\hb{M-SLQ})_H}$. Moreover, denote by ${V_H}(\cdot)$ the associated stochastic value flow. Then the following two assertions hold:
					\begin{itemize}
						\item[(i)] For any choice of $\sigma \in \mathcal{T}[0, \tau)$ and $\vartheta\in L_{\sF_t^\alpha}^2(\Omega;\mathcal{S})$ with $\a(\sigma)=\vf$, the following inequality holds:
						\begin{align*}
							\dbE\big[{J^H}(\sigma, 0,\vf; u(\cd))\big] \geqslant \dbE\big[J^\t(\sigma, 0,\vf ; u(\cd)) \big]\geqslant \e \mathbb{E}\big[\int_{\sigma}^{\tau}|u(s)|^{2} d s\big], \q  \forall u(\cd) \in \mathcal{U}[\sigma, \tau] .
						\end{align*}
					\end{itemize}
					\begin{itemize}
						\item[(ii)] The restriction $u_{j}^{\tau}(\cd)=\left.u_{j}(\cd)\right|_{[0, \tau]}$  over the time inverval $[0, \tau]$ is optimal for both Problem (M-SLQ)$^\t$ and  Problem ${(\hb{M-SLQ})_H}$ w.r.t. the same initial triple $(0, e_{j},i_0)$.
					\end{itemize}
					For the above assertion (i), in fact, the first inequality is true since $H$ is positive semi-definite, and the second inequality comes from the proof of the assertion (i) of  \autoref{21.8.31.3}. Then, as a result of \autoref{21.8.21.4}, both Problems (M-SLQ)$^\t$ and ${(\hb{M-SLQ})_H}$ are uniquely solvable at any $\sigma \in \mathcal{T}[0, \tau).$

					\ms
					
					For the above assertion (ii), on one hand, by the assertion (ii) of \autoref{21.8.31.3}, we see that $u_{j}^{\tau}(\cd)$ is optimal for Problem (M-SLQ)$^\t$ w.r.t. the initial triple $(0, e_{j},i_0)$.
					On the other hand, according to \autoref{Theorem4.1}, in order to show that $u_{j}^{\tau}(\cd)$ is an open-loop optimal control for Problem ${(\hb{M-SLQ})_H}$ w.r.t. $\left(0, e_{j},i_0\right)$,  it suffices to prove that the solution $(X_{j}^{\tau}(\cd), Y_{j}^{\tau}(\cd), Z_{j}^{\tau}(\cd),\G^{j,\t}(\cd))$ of the following FBSDE:
					\bel{8.31.4}\left\{\ba{ll}
					\ds dX_{j}^{\tau}(s)=\big[A(s,\a(s))X_{j}^{\tau}(s)+B(s,\a(s))u_{j}^{\tau}(s)\big]ds
					+\big[C(s,\a(s))X_{j}^{\tau}(s)+D(s,\a(s))u_{j}^{\tau}(s)\big]dW(s),\\
					\ns\ds dY_{j}^{\tau}(s)=-\widetilde{F}(s,\a(s), X_j^{\tau}(s),  Y_{j}^{\tau}(s), Z_{j}^{\tau}(s), u_{j}^{\tau}(s))ds
					%=-\big[A(s,\a(s))^\top Y_{j}^{\tau}(s)+C(s,\a(s))^\top Z_{j}^{\tau}(s)+Q(s,\a(s))X_{j}^{\tau}(s)+S(s,\a(s))^\top u_{j}^{\tau}(s)\big]ds\\
					%
					+Z_{j}^{\tau}(s)dW(s)+\G^{j,\tau}(s)\bullet d\widetilde{N}(s),\ s\in[0,\t],\\
					\ns\ds 	X_{j}^{\tau}(0)=e_j,\q\a(0)=i_0,\q  Y_{j}^{\tau}(\tau)=[P(\tau,\a(\t))+H] X_{j}^{\tau}(\tau),
					\ea\right.\ee
					satisfying the following stationarity condition:
					\begin{align}\label{21.8.31.5}
						%	B(s,\a(s))^\top Y_{j}^{\tau}(s)+D(s,\a(s))^\top Z_{j}^{\tau}(s)
						%	+S(s,\a(s))X_{j}^{\tau}(s)+R(s,\a(s))u_{j}^{\tau}(s)=0,\q \hb{a.e. $s\in[0,\t]$, a.s.}
						F(s,\a(s), X_j^{\tau}(s),  Y_{j}^{\tau}(s), Z_{j}^{\tau}(s),
						u_{j}^{\tau}(s))=0,\q \hb{a.e. $s\in[0,\t]$, a.s.}.
					\end{align}
					Note that $X_{j}^{\tau}(s)=\boldsymbol{X}(s) e_{j}$ for $0 \leqslant s \leqslant \tau$. Thus, by the choice of $H$, we have
					\begin{equation}\label{21.8.31.6}
						H X_{j}^{\tau}(\tau)=H \boldsymbol{X}(\tau) e_{j}=0.
					\end{equation}
					It follows that the terminal value $Y_{j}^{\tau}(\tau)=P(\tau,\a(\t)) X_{j}^{\tau}(\tau)$, which implies that FBSDE \rf{8.31.4}
					%%
					%$$\left\{\ba{ll}
					%%
					% dX_{j}^{\tau}(s)=\big[A(s,\a(s))X_{j}^{\tau}(s)+B(s,\a(s))u_{j}^{\tau}(s)\big]ds
					%+\big[C(s,\a(s))X_{j}^{\tau}(s)+D(s,\a(s))u_{j}^{\tau}(s)\big]dW(s),\\
					%%
					% dY_{j}^{\tau}(s)=-\big[A(s,\a(s))^\top Y_{j}^{\tau}(s)
					%+C(s,\a(s))^\top Z_{j}^{\tau}(s)+Q(s,\a(s))X_{j}^{\tau}(s)+S(s,\a(s))^\top u_{j}^{\tau}(s)\big]ds\\
					%%
					%\qq\qq\ +Z_{j}^{\tau}(s)dW(s)+\G^{j,\tau}(s)\bullet d\widetilde{N}(s),\qq s\in[0,\t],\\
					%%
					% X_{j}^{\tau}(0)=e_j,\qq\a(0)=i,\qq  Y_{j}^{\tau}(\tau)=P(\tau,\a(\t)) X_{j}^{\tau}(\tau),
					%%
					%\ea\right.$$
					%%
					%which
					%%
					is exactly the FBSDE associated with Problem (M-SLQ)$^\t$.
					Then from \autoref{Theorem4.1} again, the stationarity condition \rf{21.8.31.5} follows from the fact that $u_{j}^{\tau}(\cd)$ is an open-loop optimal control of Problem (M-SLQ)$^\t$ with respect to $\left(0, e_{j},i_0\right)$.
					
					\ms
					
					Now, for Problem ${(\hb{M-SLQ})_H}$, by the above assertion (i) and \autoref{21.8.21.5}, there is a bounded, left-continuous and $\mathbb{S}^{n}$-valued process $P_H(\cd,i)$ such that
					%							%
					\begin{equation}\label{22.6.25.1}
						{V_H}(\sigma,\xi,\vf)=\langle P_H(\sigma,\vf)\xi,\xi\rangle,\q \forall (\sigma,\xi,\vf)\in\cT[0,\t)\ts L_{\sF_{\sigma}}^{\infty}(\Omega ; \mathbb{R}^{n})\ts
						L_{\sF_{\sigma}^\a}^{2}(\Omega ; \cS).
					\end{equation}
					Next, we prove that the process $P(\cd,\a(\cd))$ appeared in \rf{21.8.21.6} equals to $P_H(\cd,\a(\cd))$ appeared in \rf{22.6.25.1}, i.e.,
					\begin{equation}\label{21.8.31.7}
						P(t,\a(t))={P}_H(t,\a(t)),  \q  t\in[0,\t).
					\end{equation}
					
					By the above assertion (ii), we see that $\{\left(X_{j}^{\tau}(s), u_{j}^\t(s)\right)\}_{s\in [0,\tau]}$ is the optimal state-control pair for both Problem (M-SLQ)$^\t$ and Problem ${(\hb{M-SLQ})_H}$ w.r.t. the initial triple $\left(0, e_{j},i_0\right)$.
					Let
					$$
					\boldsymbol{X}^{\tau}(s) \triangleq\left(X_{1}^{\tau}(s), \ldots, X_{n}^{\tau}(s)\right), \q
					\boldsymbol{U}^{\tau}(s) \triangleq\left(u_{1}^{\tau}(s), \ldots, u_{n}^{\tau}(s)\right),\q s\in[0,\tau],$$
					and take an arbitrary $x \in \mathbb{R}^{n}$. From \autoref{21.8.21.2}, we have that the pair $\left(\boldsymbol{X}^{\tau} x, \boldsymbol{U}^{\tau} x\right)$ is the optimal state-control for both Problem (M-SLQ)$^\t$ and Problem ${(\hb{M-SLQ})_H}$ w.r.t. the initial triple $(0,x,i_0)$. Moreover, by \autoref{Corollary4.5}, the following pair
					$$\big(\boldsymbol{X}^{\tau}(s) x, \boldsymbol{U}^{\tau}(s) x\big), \q s\in[t,\tau],$$
					remains optimal w.r.t. $\left(t, \boldsymbol{X}^{\tau}(t) x,\a(t)\right)$ for every $t\in[0,\tau)$.
					Thus, note that $H \boldsymbol{X}^{\tau}(\tau)=0$ by \rf{21.8.31.6},
					$$
					\begin{aligned}
						\ds {V_H}\left(t, \boldsymbol{X}^{\tau}(t) x,\a(t)\right)
						&={J_H}\left(t, \boldsymbol{X}^{\tau}(t) x,\a(t) ; \boldsymbol{U}^{\tau}(t) x\right)
						=J\left(t, \boldsymbol{X}^{\tau}(t) x,\a(t) ; \boldsymbol{U}^{\tau}(t) x\right)
						+\mathbb{E}\left\langle H \boldsymbol{X}^{\tau}(\tau) x, \boldsymbol{X}^{\tau}(\tau) x\right\rangle \\
						\ns\ds &=J\left(t, \boldsymbol{X}^{\tau}(t) x,\a(t) ; \boldsymbol{U}^{\tau}(t) x\right)
						={V}\left(t, \boldsymbol{X}^{\tau}(t) x,\a(t)\right).
					\end{aligned}$$
					Noting that $\boldsymbol{X}^{\tau}(t)=\boldsymbol{X}(t)$ when $t\in[0,\tau)$, we deduce from the above that
					$$\langle{P}_H(t,\a(t)) \boldsymbol{X}(t) x, \boldsymbol{X}(t) x\rangle
					={V_H}\left(t, \boldsymbol{X}^{\tau}(t) x,\a(t)\right)={V}\left(t, \boldsymbol{X}^{\tau}(t) x,\a(t)\right)=\langle P(t,\a(t))
					\boldsymbol{X}(t) x, \boldsymbol{X}(t) x\rangle .$$
					Since $x \in \mathbb{R}^{n}$ is arbitrary, it follows that
					$$\boldsymbol{X}(t)^{\top} P(t,\a(t)) \boldsymbol{X}(t)=\boldsymbol{X}(t)^{\top} {P}_H(t,\a(t)) \boldsymbol{X}(t), \q t\in[0,\t).$$
					%
					%Thus, from \tc{the irreducibility of the Markov chain}, we obtain that
					%%
					%%									Observing that $\sum_{k=1}^D\l_{ik}(t)=0$, and from the irreducibility of the Markov chain,  we can rewrite the above equation into the following form
					%%									%
					%%									$$
					%%									\sum_{i=1}^D\boldsymbol{X}(t)^{\top} P(t,i) \boldsymbol{X}(t)
					%%									=\sum_{i=1}^D\boldsymbol{X}(t)^{\top} \widetilde{P}(t,i) \boldsymbol{X}(t) ; \quad t\in[0,\t).
					%%									$$
					%%									%
					%%									Therefore, by taking $\a(0)$ for every initial pair $(0,i)$, we have that
					%%									%
					%$$\boldsymbol{X}(t)^{\top} P(t,i) \boldsymbol{X}(t)=\boldsymbol{X}(t)^{\top} \widetilde{P}(t,i) \boldsymbol{X}(t),  \q  i\in\cS, \hb{ a.e. } t\in[0,\t).$$
					%									%
					From the definition of $\tau,$ we see that $\boldsymbol{X}$ is invertible on $[0, \tau)$, which implies that
					the relation \rf{21.8.31.7} holds.
					Thus, for some choice of $i\in\cS$, one has
					\begin{equation}\label{22.7.6.1}
						P(t,i)=P_H(t,i),\q t\in[0,\t).
					\end{equation}
					However, on the other hand, ${P}_H(\tau,i)=P(\tau,i)+H$, and both $P(\cd,i)$ and ${P}_H(\cd,i)$ are left-continuous.
					Finally, in \rf{22.7.6.1}, letting $t \uparrow \tau$ yields a contradiction: $P(\tau,i)=P(\tau,i)+H$, since $|H|=1$ on $\mathbb{O}$.
				\end{proof}
				
				Note that under the conditions of \autoref{21.9.3.1}, Problem (M-SLQ) is uniquely solvable by \autoref{21.8.21.4}.
				Let $\{\left(X_{j}(s), u_{j}(s)\right)\}_{s\in[0,T]}$ be the open-loop optimal pair and
				$\left\{\left(Y_{j}(s), Z_{j}(s),\G^j(s)\right) \right\}_{s\in[t,T]}$
				be the adapted solution of the adjoint BSDE corresponding to  the initial triple $(0,e_j,i_0)$ for each $j=1,2,...,n$, respectively.
				Now, with $\boldsymbol{\G}_{kl}(s)\deq (\G_{kl}^1(s),\cdots, \G_{kl}^n(s)),$
				\autoref{21.8.21.2} implies that the matrix-valued processes $(\boldsymbol{X}(\cd), \boldsymbol{U}(\cd), \boldsymbol{Y}(\cd),\boldsymbol{Z}(\cd),\boldsymbol{\G}(\cd))$ defined by \rf{22.6.10.1}  satisfy FBSDE \rf{22.6.25.2} with the initial triple $(0,I_n,i_0)$ over  $[0,T]$.
				Moreover, the following stationary holds:
				\begin{align}\label{21.9.3.5}
					F(s,\a(s), \BX(s), \BY(s), \BZ(s), \BU(s))=0, \q \hb{a.e. $s\in[0,T]$, a.s.}.
				\end{align}
				In addition, on one hand, from \autoref{21.9.2.1} we see that $\BX(\cd)$ is invertible. On the other hand, from \autoref{21.8.21.5} and \autoref{21.8.22.4}, there is a process
				$P:[0,T]\ts\cS\ts\Omega\rightarrow\dbS^n$, which is left-continuous, bounded and $\dbF^W$-adapted, that satisfies \rf{21.8.21.6}.

				\begin{lemma} \sl
					Suppose that condition {\rm (\textbf{H})} holds,
					and there is a constant $\e>0$ such that \rf{21.8.21.3.2} holds. Then
					\begin{equation}\label{21.9.6.1}
						P(s,\a(s))=\boldsymbol{Y}(s) \boldsymbol{X}(s)^{-1},\q s\in[0,T],%\boldsymbol{Y}(s)=P(s,\a(s)) \boldsymbol{X}(s), \q \forall s\in[0,T],
					\end{equation}
					where $P(\cd,\a(\cd))$ is the process appeared in \rf{21.8.21.6} and the pair $(\boldsymbol{X}(\cd),\boldsymbol{Y}(\cd))$ is defined in \rf{22.6.10.1}.
				\end{lemma}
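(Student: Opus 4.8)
The plan is to identify $\boldsymbol{Y}(s)\boldsymbol{X}(s)^{-1}$ with $P(s,\a(s))$ by combining the quadratic representation of the stochastic value flow with the symmetry of $\boldsymbol{X}(s)^{\top}\boldsymbol{Y}(s)$. Fix $t\in[0,T)$ and a deterministic vector $x\in\dbR^{n}$. By \autoref{21.8.21.2}, $(\boldsymbol{X}x,\boldsymbol{U}x)$ is an open-loop optimal pair for the initial triple $(0,x,i_{0})$ and $(\boldsymbol{Y}x,\boldsymbol{Z}x,\boldsymbol{\G}\circ x)$ solves the associated adjoint BSDE \rf{21.8.20.1}; by \autoref{Corollary4.5} its restriction to $[t,T]$ is optimal with respect to $(t,\boldsymbol{X}(t)x,\a(t))$. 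Hence \autoref{21.9.3.3} gives $V(t,\boldsymbol{X}(t)x,\a(t))=\langle\boldsymbol{Y}(t)x,\boldsymbol{X}(t)x\rangle$ (using uniqueness of the adjoint BSDE to identify $Y^{*}(t)=\boldsymbol{Y}(t)x$), while \autoref{21.8.21.5}, extended from bounded to square-integrable initial data by a standard truncation argument that exploits the boundedness of $P$, gives $V(t,\boldsymbol{X}(t)x,\a(t))=\langle P(t,\a(t))\boldsymbol{X}(t)x,\boldsymbol{X}(t)x\rangle$. Equating the two, and using that $x$ is arbitrary together with continuity of both sides in $x$ (so that the exceptional null set can be chosen independently of $x$), yields $\langle\boldsymbol{X}(t)^{\top}\boldsymbol{Y}(t)x,x\rangle=\langle\boldsymbol{X}(t)^{\top}P(t,\a(t))\boldsymbol{X}(t)x,x\rangle$ for all $x\in\dbR^{n}$ a.s.; since $P$ is $\dbS^{n}$-valued, $\boldsymbol{X}(t)^{\top}P(t,\a(t))\boldsymbol{X}(t)$ is symmetric, so the symmetric part of $\boldsymbol{X}(t)^{\top}\boldsymbol{Y}(t)$ equals $\boldsymbol{X}(t)^{\top}P(t,\a(t))\boldsymbol{X}(t)$.

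It therefore remains to show that $\boldsymbol{X}(s)^{\top}\boldsymbol{Y}(s)$ is itself symmetric for every $s\in[0,T]$. I would apply It\^o's formula with jumps to $\Psi(s)\deq\boldsymbol{Y}(s)^{\top}\boldsymbol{X}(s)-\boldsymbol{X}(s)^{\top}\boldsymbol{Y}(s)$ using the dynamics in FBSDE \rf{22.6.25.2}. A direct computation shows that the drift of $\boldsymbol{Y}^{\top}\boldsymbol{X}$ equals $-\boldsymbol{X}^{\top}Q\boldsymbol{X}-\boldsymbol{U}^{\top}S\boldsymbol{X}+\boldsymbol{Y}^{\top}B\boldsymbol{U}+\boldsymbol{Z}^{\top}D\boldsymbol{U}$; invoking the stationarity condition \rf{21.8.20.3}, i.e. $B^{\top}\boldsymbol{Y}+D^{\top}\boldsymbol{Z}+S\boldsymbol{X}+R\boldsymbol{U}=0$, to substitute $\boldsymbol{Y}^{\top}B\boldsymbol{U}+\boldsymbol{Z}^{\top}D\boldsymbol{U}=-\boldsymbol{X}^{\top}S^{\top}\boldsymbol{U}-\boldsymbol{U}^{\top}R\boldsymbol{U}$, this drift becomes $-\boldsymbol{X}^{\top}Q\boldsymbol{X}-\boldsymbol{U}^{\top}S\boldsymbol{X}-\boldsymbol{X}^{\top}S^{\top}\boldsymbol{U}-\boldsymbol{U}^{\top}R\boldsymbol{U}$, which is symmetric; hence the drift of $\Psi$ vanishes and $\Psi$ is a c\`adl\`ag local martingale (the Brownian and regime-switching martingale integrands do not obstruct this, since they cancel in $\Psi$). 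Because $\boldsymbol{X}(\cd)$ and $\boldsymbol{Y}(\cd)$ have square-integrable suprema, $\sup_{s\in[0,T]}|\Psi(s)|$ is integrable, so $\Psi$ is a genuine (uniformly integrable) martingale; its terminal value is $\Psi(T)=\boldsymbol{X}(T)^{\top}G(\a(T))^{\top}\boldsymbol{X}(T)-\boldsymbol{X}(T)^{\top}G(\a(T))\boldsymbol{X}(T)=0$ since $G(\a(T))\in\dbS^{n}$, whence $\Psi(s)=\dbE[\Psi(T)\mid\sF_{s}]=0$ for all $s$, i.e. $\boldsymbol{X}(s)^{\top}\boldsymbol{Y}(s)$ is symmetric.

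Combining the two halves: $\boldsymbol{X}(t)^{\top}\boldsymbol{Y}(t)$ is symmetric and its symmetric part equals $\boldsymbol{X}(t)^{\top}P(t,\a(t))\boldsymbol{X}(t)$, so $\boldsymbol{X}(t)^{\top}\boldsymbol{Y}(t)=\boldsymbol{X}(t)^{\top}P(t,\a(t))\boldsymbol{X}(t)$; multiplying on the left by $(\boldsymbol{X}(t)^{\top})^{-1}$ and on the right by $\boldsymbol{X}(t)^{-1}$ — both legitimate by the invertibility of $\boldsymbol{X}(\cd)$ from \autoref{21.9.2.1} — gives $\boldsymbol{Y}(t)\boldsymbol{X}(t)^{-1}=P(t,\a(t))$ for $t\in[0,T)$, and the case $s=T$ follows from $\boldsymbol{Y}(T)=G(\a(T))\boldsymbol{X}(T)$ and $P(T,\cd)=G(\cd)$; a left-/right-continuity argument promotes the pointwise-in-$t$ identity to one valid for all $s\in[0,T]$. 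The main obstacle is the symmetry step: one must run the jump-It\^o computation carefully and, crucially, use the pointwise stationarity identity to recognize that the drift of $\boldsymbol{Y}^{\top}\boldsymbol{X}$ is symmetric, and then verify the true (not merely local) martingale property via the integrability bookkeeping above; a secondary subtlety is the passage from bounded to square-integrable initial conditions needed to apply the value-flow representation to $\boldsymbol{X}(t)x$.
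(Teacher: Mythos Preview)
Your approach is essentially the paper's: use \autoref{21.8.21.2}, \autoref{Corollary4.5}, and \autoref{21.9.3.3} to obtain $V(s,\boldsymbol{X}(s)x,\a(s))=\langle\boldsymbol{Y}(s)x,\boldsymbol{X}(s)x\rangle$, match it against \rf{21.8.21.6}, and cancel $\boldsymbol{X}(s)$ via \autoref{21.9.2.1}. The difference is that the paper passes directly from $x^{\top}\boldsymbol{X}(s)^{\top}P(s,\a(s))\boldsymbol{X}(s)x=x^{\top}\boldsymbol{X}(s)^{\top}\boldsymbol{Y}(s)x$ for all $x$ to $\boldsymbol{X}(s)^{\top}P(s,\a(s))\boldsymbol{X}(s)=\boldsymbol{X}(s)^{\top}\boldsymbol{Y}(s)$, i.e.\ it silently assumes the symmetry of $\boldsymbol{X}(s)^{\top}\boldsymbol{Y}(s)$; you instead supply a separate martingale argument for this symmetry. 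That extra step is a genuine strengthening of the exposition, and your Itô computation is correct: the drift of $\boldsymbol{Y}^{\top}\boldsymbol{X}$ becomes symmetric after substituting the stationarity condition \rf{21.8.20.3}, so $\Psi$ has zero drift and the terminal value $\Psi(T)=0$ forces $\Psi\equiv0$ by the martingale property.

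Two small remarks. First, your parenthetical that the Brownian and jump integrands ``cancel in $\Psi$'' is not accurate --- the martingale integrands of $\Psi$ are the antisymmetric parts of those of $\boldsymbol{Y}^{\top}\boldsymbol{X}$ and are not zero in general --- but this is irrelevant: you only need zero drift to conclude $\Psi$ is a (local) martingale, and you already have that. Second, you are right to flag the extension of \rf{21.8.21.6} from bounded to square-integrable $\xi$ (needed since $\boldsymbol{X}(s)x$ is only in $L^{2}_{\sF_{s}}$); the paper uses this same extension without comment, and the truncation you describe, together with the boundedness of $P$ from \autoref{21.8.22.4}, handles it.
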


				\begin{proof}
					For arbitrary $x \in \mathbb{R}^{n}$ and $s\in[0,T]$, set that
					$$\begin{aligned}
						X^{*}(s)=\boldsymbol{X}(s) x,\q u^{*}(s)=\boldsymbol{U}(s) x,\q
						Y^{*}(s)=\boldsymbol{Y}(s) x,\q Z^{*}(s)=\boldsymbol{Z}(s) x,\q
						\G^{*}(s)=\boldsymbol{\G}(s)\circ x.
					\end{aligned}$$
					Then, on  one hand, \autoref{21.8.21.2} implies that the pair $\left(X^{*}(\cd), u^{*}(\cd)\right)$ is an open-loop optimal pair w.r.t. the initial triple $(0, x,i_0)$, and that the triple $\left(Y^{*}(\cd), Z^{*}(\cd), \Gamma^*(\cd)\right)$ is the adapted solution to the adjoint BSDE associated with $\left(X^{*}(\cd), u^{*}(\cd)\right)$.
					%								%
					On the other hand, for every $s \in[0, T]$, \autoref{Corollary4.5} implies that the restriction $(X^{*}(\cd)|_{[s, T]},u^{*}(\cd)|_{[s, T]})$ of $(X^{*}(\cd), u^{*}(\cd))$ over the interval $[s, T]$ remains optimal with respect to $(s, X^{*}(s),\a(s))$. Thus, from  \autoref{21.9.3.3}, we have that
					$$
					{V}\left(s, X^{*}(s),\a(s)\right)=\left\langle Y^{*}(s), X^{*}(s)\right\rangle.
					$$
					Owing to the relation \rf{21.8.21.6}, the above equation implies that
					$$
					\begin{aligned}
						\ds	x^{\top} \boldsymbol{X}(s)^{\top} P(s,\a(s)) \boldsymbol{X}(s) x &=\langle P(s,\a(s)) \boldsymbol{X}(s) x, \boldsymbol{X}(s) x\rangle=\left\langle P(s,\a(s)) X^{*}(s), X^{*}(s)\right\rangle
						={V}\left(s, X^{*}(s),\a(s)\right)\\
						\ns\ds 	&=\left\langle Y^{*}(s), X^{*}(s)\right\rangle
						=\langle\boldsymbol{Y}(s) x, \boldsymbol{X}(s) x\rangle=x^{\top} \boldsymbol{X}(s)^{\top} \boldsymbol{Y}(s) x.
					\end{aligned}
					$$
					Since $x \in \mathbb{R}^{n}$ is arbitrary, we deduce that
					$$\boldsymbol{X}(s)^{\top} P(s,\a(s)) \boldsymbol{X}(s)=\boldsymbol{X}(s)^{\top} \boldsymbol{Y}(s).$$
					The desired result follows from the fact that $\boldsymbol{X}(\cd)$ is invertible.
				\end{proof}

				In the following, please keep in mind that $P(\cd,\a(\cd))$ represents the process appeared in \rf{21.8.21.6} and $(\boldsymbol{X}(\cd), \boldsymbol{U}(\cd), \boldsymbol{Y}(\cd),\boldsymbol{Z}(\cd),\boldsymbol{\G}(\cd))$ defined by \rf{22.6.10.1}  satisfy FBSDE \rf{22.6.25.2} with $(0,I_n,i_0)$ over  $[0,T]$.

				\begin{lemma}\label{21.9.8.2} \sl
					Suppose  {\rm (\textbf{H})} and \rf{21.8.21.3.2} hold. Then, with relation \rf{21.9.6.1} and notation,
					\begin{equation}\label{21.9.6.2}
						\begin{aligned}
							\ds &\     \Theta(s,\a(s))= \BU(s) \boldsymbol{X}(s)^{-1}, \quad\Lambda(s)= \Pi(s,\a(s))-P(s,\a(s))[C(s)+D(s) \Theta(s,\a(s))], \\
							\ns\ds &\ \Pi(s,\a(s))=\BZ(s) \boldsymbol{X}(s)^{-1}, \q
							\zeta(s)=(\z_{k,l}(s))_{k,l\in\cS} \mbox{  \rm with  }
							\z_{kl}(s)\deq \boldsymbol{\G}_{kl}(s)\boldsymbol{X}^{-1}(s), \q s\in[0,T], \end{aligned} %\left(\boldsymbol{\G}_{kl}(s)\boldsymbol{X}^{-1}(s)\right)_{k,l\in\cS}, \q~ s\in[0,T], \end{aligned}
				\end{equation}
				we have that the triple $(P(\cd,\a(\cd)),\L(\cd),\zeta(\cd))$ satisfies the following BSDE:
				\begin{equation}\label{21.9.3.4}
					\left\{\begin{aligned}
						\ds	dP(s,\a(s))=&-\big[\hat{Q}(s,\a(s))
						+\hat{S}(s, \a(s))^\top \Th(s,\a(s))\big] ds+\Lambda(s) d W(s)
						+\z(s)\bullet d\wt{N}(s), \q s \in[0, T], \\
						\ns\ds 	P(T,\a(T))=&G(\a(T)), \q \a(0)=i_0,
					\end{aligned}\right.
				\end{equation}
				where $\hat Q(\cd)$ and $\hat S(\cd)$ are defined in \rf{hatsq}.
				%							\begin{equation}\label{21.9.3.4}
					%								\left\{\begin{aligned}
						%									%
						%									dP(s,\a(s))=&-\Big\{P(s,\a(s)) A(s,\a(s))+A(s,\a(s))^{\top} P(s,\a(s))
						%									+C(s,\a(s))^{\top} P(s,\a(s)) C(s,\a(s))\\
						%									%
						%									&\ +\Lambda(s) C(s,\a(s))+C(s,\a(s))^{\top} \Lambda(s)+Q(s,\a(s))
						%									-\big[P(s,\a(s)) B(s,\a(s))\\
						%									%
						%									&\ +C(s,\a(s))^{\top} P(s,\a(s)) D(s,\a(s))+\Lambda(s) D(s,\a(s))+S(s,\a(s))^{\top}\big]\Th(s,\a(s))\Big\} d s\\
						%									%
						%									&\ +\Lambda(s) d W(s)
						%									+\zeta(s)\bullet d\wt{N}(s), \quad~ s \in[0, T], \\
						%									%
						%									P(T,\a(T))=&G(\a(T)), \q~ \a(0)=i,
						%									%
						%								\end{aligned}\right.
					%							\end{equation}
				%							%
				Moreover, $\Lambda(\cd)$ is symmetric and the following relation holds:
				\begin{align}\label{21.9.7.1}
					%	\begin{aligned}
						\hat{S}(s,\a(s))+\hat{R}(s,\a(s)) \Theta(s,\a(s))=0, \q \text { a.e. }s\in[0, T], \text { a.s.} .
						%	\end{aligned}
				\end{align}
				%							\begin{equation}\label{21.9.7.1}
					%								\begin{aligned}
						%									& B(s,\a(s))^{\top} P(s,\a(s))+D(s,\a(s))^{\top} P(s,\a(s)) C(s,\a(s))+D(s,\a(s))^{\top} \Lambda(s)
						%									+S(s,\a(s))\\
						%									%
						%									& +\left(R(s,\a(s))+D(s,\a(s))^{\top} P(s,\a(s)) D(s,\a(s))\right) \Theta(s,\a(s))=0, \quad \text { a.e. on }[0, T], \text { a.s. }
						%								\end{aligned}
					%							\end{equation}
				%
			\end{lemma}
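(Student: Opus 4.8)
The plan is to obtain the asserted BSDE by applying It\^o's formula with jumps to the $\dbS^n$-valued process $P(s,\a(s))=\BY(s)\BX(s)^{-1}$, where $(\BX(\cd),\BU(\cd),\BY(\cd),\BZ(\cd),\boldsymbol{\G}(\cd))$ solves \eqref{22.6.25.2} with $(0,I_n,i_0)$, $\BX(\cd)$ is invertible by \autoref{21.9.2.1}, and where the identity \eqref{21.9.6.1} and the stationarity relation \eqref{21.9.3.5} are available. The first ingredient is the dynamics of $\BX(\cd)^{-1}$. Since $\BU(s)=\Theta(s,\a(s))\BX(s)$ by the definition of $\Theta$ in \eqref{21.9.6.2}, the forward equation of \eqref{22.6.25.2} reads $d\BX(s)=A_\Theta(s)\BX(s)\,ds+C_\Theta(s)\BX(s)\,dW(s)$ with $A_\Theta(s):=A(s,\a(s))+B(s,\a(s))\Theta(s,\a(s))$ and $C_\Theta(s):=C(s,\a(s))+D(s,\a(s))\Theta(s,\a(s))$; as this equation carries no jump term, $\BX(\cd)$ is continuous, and the standard rule for inverting a matrix It\^o process yields $d(\BX(s)^{-1})=\BX(s)^{-1}\big[-A_\Theta(s)+C_\Theta(s)^2\big]\,ds-\BX(s)^{-1}C_\Theta(s)\,dW(s)$.

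Next I would expand $d\big(\BY(s)\BX(s)^{-1}\big)$ by the product rule with jumps. Because $\BX(\cd)^{-1}$ is continuous, the purely discontinuous martingale part of $\BY(\cd)$ (the term $\boldsymbol{\G}(\cd)\bullet d\widetilde N(\cd)$) has vanishing quadratic covariation with $\BX(\cd)^{-1}$, so the only covariation contribution is $-\BZ(s)\BX(s)^{-1}C_\Theta(s)\,ds$, coming from the two Brownian integrals; and, $\BX(\cd)^{-1}$ being predictable, the jump term of $d(\BY\BX^{-1})$ is $\sum_{k,l\in\cS}\boldsymbol{\G}_{kl}(s)\BX(s)^{-1}\,d\widetilde N_{kl}(s)=\zeta(s)\bullet d\widetilde N(s)$, with $\zeta_{kl}=\boldsymbol{\G}_{kl}\BX^{-1}$ exactly as in \eqref{21.9.6.2}. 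Collecting the $dW$-coefficients from $(d\BY)\BX^{-1}$ and $\BY\,d(\BX^{-1})$ gives $\BZ\BX^{-1}-P\,C_\Theta=\Pi-P\,C_\Theta=\Lambda$, again matching \eqref{21.9.6.2}. After substituting $\BY\BX^{-1}=P$, $\BZ\BX^{-1}=\Pi$, $\BU\BX^{-1}=\Theta$, $\BX\BX^{-1}=I_n$ into the drift, and using the terminal condition $\BY(T)=G(\a(T))\BX(T)$, one arrives at $dP(s,\a(s))=b(s)\,ds+\Lambda(s)\,dW(s)+\zeta(s)\bullet d\widetilde N(s)$, $P(T,\a(T))=G(\a(T))$, for an explicit $\dbR^{n\times n}$-valued drift $b(\cd)$ assembled from $A,C,Q,S$, $P$, $\Pi$ and $C_\Theta$.

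It then remains to recognize $b(s)=-\big[\hat Q(s,\a(s))+\hat S(s,\a(s))^\top\Theta(s,\a(s))\big]$ (with $\hat P,\hat\Lambda$ read as $P,\Lambda$ in \eqref{hatsq}) and to prove the symmetry of $\Lambda(\cd)$ together with \eqref{21.9.7.1}. For the symmetry, since $P(\cd,\a(\cd))$ is $\dbS^n$-valued by \autoref{21.8.21.5}, the semimartingale $P$ coincides with its transpose; transposing the SDE just derived and invoking uniqueness of the decomposition of a special semimartingale into its predictable finite-variation, continuous local-martingale and purely discontinuous local-martingale parts, we get $\Lambda=\Lambda^\top$ (and likewise $b=b^\top$, $\zeta_{kl}=\zeta_{kl}^\top$). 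Granted this, the drift identification is bookkeeping: writing $\hat Q,\hat S$ as in \eqref{hatsq} with $\hat P,\hat\Lambda$ replaced by $P,\Lambda$, substituting $\Lambda=\Pi-P\,C_\Theta$, and using $P=P^\top$, $\Lambda=\Lambda^\top$, one checks that the extra terms of $\hat Q+\hat S^\top\Theta$ not present in $b$ cancel in pairs, the decisive cancellation being exactly the one that needs $\Lambda=\Lambda^\top$. Finally, \eqref{21.9.7.1} follows from the stationarity condition \eqref{21.9.3.5}, which states $B(s,\a(s))^\top\BY(s)+D(s,\a(s))^\top\BZ(s)+S(s,\a(s))\BX(s)+R(s,\a(s))\BU(s)=0$; right-multiplying by $\BX(s)^{-1}$ gives $B^\top P+D^\top\Pi+S+R\,\Theta=0$, and inserting $D^\top\Lambda=D^\top\Pi-D^\top PC-D^\top PD\,\Theta$ into $\hat S=B^\top P+D^\top PC+D^\top\Lambda+S$ produces $\hat S=-(R+D^\top PD)\Theta=-\hat R\,\Theta$, i.e. $\hat S+\hat R\,\Theta=0$.

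The main obstacle is the jump-It\^o bookkeeping: getting the dynamics of $\BX^{-1}$ right, carefully discarding the covariation between the jump part of $\BY$ and the continuous process $\BX^{-1}$ while retaining the Brownian covariation term, and then pushing the long drift computation through to the end. The structural subtlety worth flagging is the interdependence between symmetry and the Riccati form of the drift: $b(s)$ only collapses to $-[\hat Q+\hat S^\top\Theta]$ once $\Lambda$ is known to be symmetric, so the symmetry of $\Lambda$ — inherited from $P\in\dbS^n$ through uniqueness of the semimartingale decomposition — must be established before, not after, the drift is identified.
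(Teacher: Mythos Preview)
Your proposal is correct and follows essentially the same route as the paper's proof: compute $d(\BX^{-1})$, apply It\^o's formula with jumps to $\BY\BX^{-1}$, read off the martingale parts as $\Lambda$ and $\zeta$, deduce $\Lambda=\Lambda^\top$ from the symmetry of $P$, and recover \eqref{21.9.7.1} by right-multiplying the stationarity relation \eqref{21.9.3.5} by $\BX^{-1}$. Your observation that $\Lambda=\Lambda^\top$ is needed before the drift collapses to $-[\hat Q+\hat S^\top\Theta]$ is a valid point about logical ordering that the paper's write-up glosses over (it states the Riccati-form drift first and checks $\Lambda=\Lambda^\top$ afterward), but since the symmetry argument uses only the diffusion part of the raw It\^o expansion, no circularity arises and the two presentations are equivalent.
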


			\begin{proof}
				First, by the relation (\ref{21.8.21.6}), we have
				$$
				\langle G(\a(T)) \xi, \xi\rangle={V}(T, \xi,\a(T))=\langle P(T,\a(T)) \xi, \xi\rangle, \q  \forall \xi \in L_{\sF_{T}}^{\infty}\left(\Omega ; \mathbb{R}^{n}\right),
				$$
				which leads to that $P(T,\a(T))=G(\a(T))$. Note that $\boldsymbol{X}(\cd)$ satisfies SDE \rf{21.9.4.1} and is invertible, so its invertibility (denoted by $\boldsymbol{X}^{-1}(\cd)$) exists and satisfies the following SDE:
				$$
				d \boldsymbol{X}(s)^{-1}=\Xi(s,\a(s)) d s+\Delta(s,\a(s)) d W(s), \q  s \in[0, T],
				$$
				where %for $s \in[0, T]$,
				$$
				\begin{aligned}
					\ds \Xi(s,\a(s)) &=\boldsymbol{X}^{-1}(s)\big\{\big[C(s,\a(s))+D(s,\a(s)) \Theta(s,\a(s))\big]^{2}-A(s,\a(s))-B(s,\a(s)) \Theta(s,\a(s))\big\},\\
					\ns\ds \D(s,\a(s)) &=-\boldsymbol{X}^{-1}(s)\big[C(s,\a(s))+D(s,\a(s)) \Theta(s,\a(s))\big].
				\end{aligned}
				$$
				Applying It\^o's formula to the right-hand side of \rf{21.9.6.1}, we have
				\begin{align*}
					d P(s,\a(s))&=-F(s,\a(s), \BX(s),\BY(s),\BZ(s),\BU(s)) \boldsymbol{X}^{-1}(s) d s+Z(s) \Delta(s) d s
					+\boldsymbol{Z}(s) \boldsymbol{X}^{-1}(s) d W(s)\\
					&\q+\sum_{k,l=1}^D\boldsymbol{\G}_{kl}(s)\boldsymbol{X}^{-1}(s)d\widetilde{N}_{kl}(s)
					+\boldsymbol{Y}(s) \Xi(s,\a(s)) d s+\boldsymbol{Y}(s) \Delta(s,\a(s)) d W(s)\\
					&=\big[-A(s,\a(s))^{\top} P(s,\a(s))-C(s,\a(s))^{\top} \Pi(s,\a(s))-Q(s,\a(s))
					-S(s,\a(s))^{\top} \Theta(s,\a(s))\\
					\ns\ds &\q+P(s,\a(s))\big[(C(s,\a(s))+D(s,\a(s)) \Theta(s,\a(s)))^{2}
					-A(s,\a(s))-B(s,\a(s)) \Theta(s,\a(s))\big]\\
					&\q -\Pi(s,\a(s))(C(s,\a(s))+D(s,\a(s)) \Theta(s, \a(s))\big] d s+\sum_{k,l=1}^D\z_{kl}(s)d\widetilde{N}_{kl}(s)\\
					&\q +\big[\Pi(s,\a(s))-P(s,\a(s))[C(s,\a(s))+D(s,\a(s)) \Theta(s,\a(s))]\big] d W(s)\\
					\ns\ds &=-\big[\hat{Q}(s,\a(s))
					+\hat{S}(s, \a(s))^\top \Th(s,\a(s))\big] ds
					%                               \Big\{-P(s,\a(s)) A(s,\a(s))-A(s,\a(s))^{\top} P(s,\a(s))-C(s,\a(s))^{\top} P(s,\a(s)) C(s,\a(s))-\Lambda(s) C(s,\a(s))\\
					%								%
					%								&-C(s,\a(s))^{\top} \Lambda(s)-Q(s,\a(s))
					%								-\big[P(s,\a(s)) B(s,\a(s))+C(s,\a(s))^{\top} P(s,\a(s)) D(s,\a(s))\\
					%								%
					%								& +\Lambda(s)D(s,\a(s))+S(s,\a(s))^{\top}\big]\Theta(s,\a(s))\Big\}ds
					%
					+\Lambda(s)d W(s)+\z(s)\bullet d\widetilde{N}(s).
				\end{align*}							
				Note that $P(s,\a(s))$ is symmetric, i.e., $P(s,\a(s))=P(s,\a(s))^{\top}$ for $s\in[0,T]$.
				Comparing the diffusion coefficients of the above BSDEs satisfied by $P(\cd,\a(\cd))$ and $P^{\top}(\cd,\a(\cd))$, we obtain
				$
				\Lambda(s)=\Lambda(s)^{\top}$ for  $s\in[0,T]$.
				Further combining \rf{21.9.3.5} and \rf{21.9.6.2}, we get
				\begin{align*}
					&	\hat{S}(s,\a(s))+\hat{R}(s,\a(s))\Theta(s,\a(s))
					=F(s,\a(s), \BX(s), \BY(s), \BZ(s), \BU(s))\boldsymbol{X}^{-1}(s)%
					=0.
				\end{align*}
				%						
				%							
				%							$$
				%							\begin{aligned}
					%								\ds &\q B(s,\a(s))^{\top} P(s,\a(s))+D(s,\a(s))^{\top} P(s,\a(s)) C(s,\a(s))+D(s,\a(s))^{\top} \Lambda(s)\\
					%								%
					%								\ns\ds &\q +S(s,\a(s))+\big(R(s,\a(s))+D(s,\a(s))^{\top} P(s,\a(s)) D(s,\a(s))\big) \Theta(s,\a(s))\\
					%								%
					%								\ns\ds &=B(s,\a(s))^{\top} P(s,\a(s))+D(s,\a(s))^{\top} \Pi(s,\a(s))+S(s,\a(s))+R(s,\a(s)) \Theta(s,\a(s)) \\
					%								%
					%								\ns\ds &=\big[B(s,\a(s))^{\top} Y(s,\a(s))+D(s,\a(s))^{\top} Z(s,\a(s))+S(s,\a(s)) X(s,\a(s))+R(s,\a(s)) U(s,\a(s))\big] \boldsymbol{X}^{-1}(s)%
					%								%
					%								=0.
					%							\end{aligned}
				%							$$
				This completes the proof.
				%							%
			\end{proof}

			\begin{lemma}\label{21.9.8.3} \sl
				%					%
				Suppose {\rm (\textbf{H})} holds, and there exists a constant $\e>0$ such that  \rf{21.8.21.3.2} holds. Then
				\begin{align}\label{21.9.6.3}
					\hat{R}(s,\a(s))=R(s,\a(s))+D(s,\a(s))^\top P(s,\a(s))D(s,\a(s))\ges\e I_m,\q \hb{a.e. on $[0,T]$, a.s..}
				\end{align}
			\end{lemma}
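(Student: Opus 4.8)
The plan is to combine the uniform convexity of the stopped problem Problem (M-SLQ)$^\t$ with a short-time (``spike'') perturbation of the zero control, and then differentiate the resulting inequality in time. Throughout, $P(\cd,\a(\cd))$ denotes the bounded, $\dbF$-adapted process of \rf{21.8.21.6}, which is exactly the terminal weight in the cost functional $J^\t$ of Problem (M-SLQ)$^\t$. First I would fix $t_0\in[0,T)$, a vector $v\in\dbR^m$, a set $A\in\sF_{t_0}$, and a small $h>0$; set $\t:=t_0+h$ and work with Problem (M-SLQ)$^\t$ at the initial triple $(t_0,0,\a(t_0))\in\cD^\t$, with the admissible control $\bar u(\cd):=v\,\mathbf{1}_A\,\mathbf{1}_{[t_0,t_0+h]}(\cd)$. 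Because the stopped problem inherits the uniform convexity (this is the second inequality obtained in the proof of assertion (i) of \autoref{21.8.31.3}, itself a consequence of \autoref{21.8.21.4}: $J^\t_0(t_0,0,\a(t_0);u(\cd))$ equals $J_0$ evaluated at the concatenation of $u(\cd)$ with the optimal continuation on $[\t,T]$), we obtain
\begin{equation*}
\e\,\dbP(A)\,|v|^2\,h=\e\,\dbE\!\int_{t_0}^{t_0+h}\!|\bar u(s)|^2ds\ \les\ J^{\t}_0\big(t_0,0,\a(t_0);\bar u(\cd)\big)=\dbE\Big[\int_{t_0}^{t_0+h}\!\Big\langle\begin{pmatrix}Q(s,\a(s))&S(s,\a(s))^\top\\ S(s,\a(s))&R(s,\a(s))\end{pmatrix}\begin{pmatrix}X(s)\\ \bar u(s)\end{pmatrix},\begin{pmatrix}X(s)\\ \bar u(s)\end{pmatrix}\Big\rangle ds+\big\langle P(\t,\a(\t))X(\t),X(\t)\big\rangle\Big],
\end{equation*}
where $X(\cd)$ solves \rf{state} on $[t_0,t_0+h]$ with $X(t_0)=0$ and control $\bar u(\cd)$.

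The second step is to let $h\downarrow 0$ and read off the leading order of each term. Standard $L^2$-estimates for \rf{state} (boundedness of $A,B,C,D$ from \textbf{(H)} and of $\bar u(\cd)$) give $\sup_{s\in[t_0,t_0+h]}\dbE|X(s)|^2=O(h)$ together with the refinement $X(s)=\mathbf{1}_A\,D(t_0,\a(t_0))v\,(W(s)-W(t_0))+r(s)$, $\sup_s\dbE|r(s)|^2=o(h)$. Hence $\dbE\!\int_{t_0}^{t_0+h}\!\langle Q X,X\rangle ds=O(h^2)$, $\dbE\!\int_{t_0}^{t_0+h}\!\langle S X,\bar u\rangle ds=O(h^{3/2})$, the $R$-term equals $\dbE[\mathbf{1}_A\langle R(t_0,\a(t_0))v,v\rangle]h+o(h)$ by the $L^\i$-bound on $R$ and Lebesgue differentiation (the chance that $\a(\cd)$ jumps on $[t_0,t_0+h]$ is $O(h)$, contributing $O(h^2)$), and the terminal term equals $\dbE[\mathbf{1}_A\langle P(t_0,\a(t_0))D(t_0,\a(t_0))v,D(t_0,\a(t_0))v\rangle]h+o(h)$. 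Dividing by $h$ and letting $h\downarrow 0$ then yields, for every $A\in\sF_{t_0}$,
\begin{equation*}
\e\,\dbE\big[\mathbf{1}_A|v|^2\big]\ \les\ \dbE\Big[\mathbf{1}_A\big(\langle R(t_0,\a(t_0))v,v\rangle+\langle D(t_0,\a(t_0))^\top P(t_0,\a(t_0))D(t_0,\a(t_0))v,v\rangle\big)\Big]=\dbE\big[\mathbf{1}_A\,\langle\hat{R}(t_0,\a(t_0))v,v\rangle\big],
\end{equation*}
so that $\langle\hat{R}(t_0,\a(t_0))v,v\rangle\ges\e|v|^2$ a.s.\ (take $A$ to be the set where this fails). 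Since this holds for a.e.\ $t_0\in[0,T]$ and every $v$ in a countable dense subset of $\dbR^m$, continuity in $v$ upgrades it to $\hat{R}(s,\a(s))\ges\e I_m$ a.e.\ on $[0,T]$, a.s., which is \rf{21.9.6.3}.

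The delicate point is the terminal term $\dbE\langle P(\t,\a(\t))X(\t),X(\t)\rangle$. One inserts the leading part $\mathbf{1}_A D(t_0,\a(t_0))v\,(W(\t)-W(t_0))$ of $X(\t)$ and bounds the cross and remainder contributions by Cauchy--Schwarz, using $\dbE|W(\t)-W(t_0)|^4=O(h^2)$; then $P(\t,\a(\t))$ is replaced by $P(t_0,\a(t_0))$ at the cost of an $o(h)$ error, since on the event that $\a(\cd)$ does not jump on $[t_0,t_0+h]$ (probability $1-O(h)$) one has $\a(\t)=\a(t_0)$ and $s\mapsto P(s,\a(s))$ is continuous at $t_0$ for a.e.\ $t_0$ (between jumps of $\a(\cd)$ it solves the continuous It\^o part of BSDE \rf{21.9.3.4}), so $P(\t,\a(t_0))\to P(t_0,\a(t_0))$ in $L^2$; finally, $(W(\t)-W(t_0))^2$ is independent of $\sF_{t_0}$ with mean $h$ and conditionally independent of the jump indicator, so the squared increment averages to $h$. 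The remaining ingredients — the $L^2$-estimates for \rf{state}, the Lebesgue-point analysis of the running cost, and the passage from the $\sF_{t_0}$-averaged inequality to the pointwise one — are routine bookkeeping.
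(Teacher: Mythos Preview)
Your approach is correct and genuinely different from the paper's. The paper does not differentiate a spike control in $h$; instead it applies It\^o's formula to $\langle P(s,\a(s))X^v(s),X^v(s)\rangle$ along the closed-loop dynamics $dX^v=[A+B\Theta]X^v\,ds+\cdots+[B\,v]\,ds+\cdots$, and uses the algebraic relation $\hat S+\hat R\,\Theta=0$ from \autoref{21.9.8.2} to obtain the \emph{exact} identity $J_0(0,0,i_0;u)=\dbE\int_0^T\langle \hat R v,v\rangle\,ds$. This immediately gives $\hat R\ge 0$; the sharp bound $\hat R\ge \e I_m$ then follows by rerunning the same argument for the shifted cost with $R$ replaced by $R-\epsilon I_m$ (any $0<\epsilon<\e$) and comparing the two value processes. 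A localization by stopping times removes an a priori integrability assumption on $\Theta,\Lambda$.

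Your route trades the It\^o-formula computation and the shifted-cost trick for a spike variation combined with Lebesgue differentiation, and it yields the $\e$-bound in a single pass. Two remarks that make your sketch airtight: first, the $o(h)$ errors are uniform in $A\in\sF_{t_0}$ because $X(\t)=\mathbf 1_A\,\tilde X(\t)$ with $\tilde X$ independent of $A$, so the set $A$ can be chosen \emph{after} passing to the limit; second, replacing $\int_{t_0}^{\t}D(s,\a(s))v\,dW(s)$ by $D(t_0,\a(t_0))v\,(W(\t)-W(t_0))$ costs $o(\sqrt h)$ in $L^2$ only for a.e.\ $t_0$ via the strong Lebesgue-point property of the bounded map $s\mapsto D(s,\a(s))$ (and the event $\{\a$ jumps on $[t_0,t_0+h]\}$ has probability $O(h)$). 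The paper's argument avoids these a.e.-$t_0$ technicalities at the price of using more of the structure established in \autoref{21.9.8.2}; yours is more elementary and uses that lemma only through the continuity of $s\mapsto P(s,\a(s))$ between jump times.
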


			\begin{proof}
				The procedure of proof can be divided into three steps.
				
				\ms
				
				\textbf{Step 1}.  Let us temporarily assume that processes $\Theta(\cd)=\{\Theta(s,\a(s))\}_{s\in[0,T]}$ and $\Lambda(\cd)=\{\Lambda(s)\}_{s\in[0,T]}$ defined by \rf{21.9.6.2} satisfy the following condition:
				\begin{equation}\label{21.9.7.2}
					\esssup_{\omega \in \Omega} \int_{0}^{T}\big[|\Theta(s, \omega,\a(s))|^{2}
					+|\Lambda(s, \omega)|^{2}\big] d s<\infty.
				\end{equation}
				Choosing an arbitrary control $v (\cd)\in \mathcal{U}[0, T]$, we consider the following SDE
				\begin{equation}\label{21.9.6.4}
					\left\{\begin{aligned}
						\ds d X^v(s) =& \big\{\big[A(s,\a(s))+B(s,\a(s)) \Theta(s,\a(s))\big] X^v(s)+B(s,\a(s)) v(s)\big\} d s \\
						\ns\ds &+ \big\{\big[C(s,\a(s))+D(s,\a(s)) \Theta(s,\a(s))\big] X^v(s)+D(s,\a(s)) v(s)\big\} d W(s), \q  s \in[0, T], \\
						\ns\ds X^v(0) =& 0,\q~\a(0)=i_0.
					\end{aligned}\right.
				\end{equation}
				By the standard SDE theory, we see that $X^v(\cd)$, solution of SDE \rf{21.9.6.4}, belongs to space $L_{\dbF}^{2}(\Omega ; C([0, T] ; \mathbb{R}^{n}))$. Hence, 							%
				\begin{equation}\label{21.9.6.8}
					u(s) \triangleq \Theta(s,\a(s)) X^v(s)+v(s) \in \mathcal{U}[0, T].
				\end{equation}
				According to $u(\cd)$ defined in \rf{21.9.6.8} and the uniqueness of SDEs, $X(\cd)$, solution of equation \rf{state} w.r.t. the initial triple $(0,0,i_0)$ over $[0,T]$,  %
				coincides with $X^v(\cd)$, solution of \rf{21.9.6.4}, i.e.,
				\begin{equation}\label{22.6.25.3}
					X^v(s)=X(s),\q s\in[0,T].
				\end{equation}
				Note that $P(s,\a(s))$ satisfies BSDE \rf{21.9.3.4}. By applying It\^o's formula to $\big\langle P(s,\a(s)) X^v(s), X^v(s)\big\rangle$ firstly and then taking expectations on both sides, we can get
				\begin{equation}\label{22.6.25.5}
					\begin{aligned}
						\mathbb{E}\big\langle G(\a(T)) X^v(T), X^v(T)\big\rangle&=\dbE\int_0^T\Big\{-\big\langle \big[Q(s,\a(s)) +\hat{S}(s,\a(s))^{\top}\Theta(s,\a(s))\big] X^v(s), X^v(s)\big\rangle
						\\
						&\qq\qq\ \ +2\big\langle\big[\hat{S}(s,\a(s))-S(s,\a(s))\big]^\top u(s), X^v(s,\a(s))\big\rangle\\
						\ns\ds &\qq\qq\ \ +\big\langle D(s,\a(s))^{\top} P(s,\a(s)) D(s,\a(s)) u(s), u(s)\big\rangle\Big\} d s.
					\end{aligned}
				\end{equation}
				%							$$
				%							\begin{aligned}
					%								&\mathbb{E}\big\langle G(\a(T)) X(T), X(T)\big\rangle\\
					%								%
					%								%
					%								&=\dbE\int_0^T\Big\{-\big\langle Q(s,\a(s)) X(s), X(s)\big\rangle
					%								-\big\langle\big[P(s,\a(s)) B(s,\a(s))+C(s,\a(s))^{\top} P(s,\a(s)) D(s,\a(s))\\
					%								%
					%								&\q+\Lambda(s) D(s,\a(s))+S(s,\a(s))^{\top}\big] \Theta(s,\a(s)) X(s), X(s)\big\rangle
					%								+2\big\langle\big[P(s,\a(s)) B(s,\a(s))+C(s,\a(s))^{\top}
					%								\\
					%								%
					%								&\q  \cd P(s,\a(s)) D(s,\a(s))+\Lambda(s) D(s,\a(s))\big] u(s), X(s)\big\rangle
					%								%
					%								+\big\langle D(s,\a(s))^{\top} P(s,\a(s)) D(s,\a(s)) u(s), u(s)\big\rangle\Big\} d s.
					%							\end{aligned}
				%							$$
				%
				Substituting \rf{22.6.25.5} into the cost functional \rf{cost} with the initial triple $(0,0,i_0)$ and noting that \rf{22.6.25.3}, we have
				$$
				\begin{aligned}
					J(0,0,i_0 ; u(\cd))
					&=\dbE\int_0^T\Big\{
					-\big\langle\hat{S}(s,\a(s))^{\top}\Theta(s,\a(s))X^v(s), X^v(s)\big\rangle
					\\
					&\qq\q\q\ \ +2\big\langle \hat{S}(s,\a(s))^\top u(s), X^v(s)\big\rangle+\big\langle\hat{R}(s,\a(s))u(s), u(s)\big\rangle\Big\} d s.
				\end{aligned}
				$$
				%							$$
				%							\begin{aligned}
					%								J(0,0,i ; u)
					%								&=\dbE\int_0^T\Big\{
					%								-\big\langle\big[P(s,\a(s)) B(s,\a(s))+C(s,\a(s))^{\top} P(s,\a(s)) D(s,\a(s))\\
					%								%
					%								&\q+\Lambda(s) D(s,\a(s))+S(s,\a(s))^{\top}\big] \Theta(s,\a(s)) X(s), X(s)\big\rangle
					%								+2\big\langle\big[P(s,\a(s)) B(s,\a(s))\\
					%								%
					%								&\q+C(s,\a(s))^{\top} P(s,\a(s)) D(s,\a(s))
					%								+\Lambda(s) D(s,\a(s))+S(s,\a(s))^\top\big] u(s), X(s)\big\rangle\\
					%								%
					%								&\q +\big\langle\big[R(s,\a(s))+ D(s,\a(s))^{\top} P(s,\a(s)) D(s,\a(s))\big] u(s), u(s)\big\rangle\Big\} d s.
					%							\end{aligned}
				%							$$
				%							%
				Combining \rf{21.9.7.1} and \rf{21.9.6.8}, we further deduce that
				\begin{align*}
					\ds J(0,0,i_0 ; u(\cd))
					&=\mathbb{E} \int_{0}^{T}\big\langle\hat{R}(s,\a(s))\big[u(s)-\Theta(s,\a(s)) X^v(s)\big], u(s)-\Theta(s,\a(s)) X^v(s)\big\rangle d s\\
					&=\mathbb{E} \int_{0}^{T}\big\langle\hat{R}(s,\a(s))v(s), v(s)\big\rangle d s.
				\end{align*}
				Finally, \rf{21.8.21.3.2} implies that
				$$J(0,0,i_0; u(\cd)) \geqslant 0,\q \forall u(\cd) \in \mathcal{U}[0, T].$$
				Therefore, we conclude from the above equation that%
				\begin{equation}\label{21.9.7.3}
					\hat{R}(s,\a(s))=R(s,\a(s))+ D(s,\a(s))^{\top} P(s,\a(s)) D(s,\a(s))\geqslant 0,
					\ \text { a.e. on }[0, T], \text { a.s.}.
				\end{equation}

				\textbf{Step 2}. Now we prove that without the additional condition \rf{21.9.7.2}, the above result \rf{21.9.7.3} still holds. The key method is to apply a localization technique so that the preceding argument can be applied to a certain stopped SLQ problem. In detail,  for each $k \geqslant 1$, we define the following stopping time (with the convention $\inf \varnothing=\infty)$:
				$$
				\tau_{k}=\inf \Big\{t \in[0, T] ; \int_{0}^{t}\big[|\Theta(s,\a(s))|^{2}+|\Lambda(s)|^{2}\big] d s \geqslant k\Big\} \wedge T.
				$$
				Take an arbitrary control $v(\cd) \in \mathcal{U}[0, T]$ and consider the state equation \rf{21.9.6.4} over the interval $\left[0, \tau_{k}\right]$.
				By the definition of $\tau_{k}$, we have
				$$
				\int_{0}^{\tau_{k}}\left[|\Theta(s,\a(s))|^{2}+|\Lambda(s)|^{2}\right] d s \leqslant k,
				$$
				which implies that $X^v(\cd)$, solution of \rf{21.9.6.4} over $[0,\t_k]$, belongs to  $L_{\dbF}^{2}\left(\Omega ; C\left(\left[0, \tau_{k}\right] ; \mathbb{R}^{n}\right)\right)$. Hence,
				$$
				u(s) \triangleq \Theta(s,\a(s)) X^v(s)+v(s) \in \mathcal{U}[0, \tau_{k}].
				$$
				Then we can proceed as in Step 1 to get that
				\begin{align*}
					\ds J(0,0,i_0 ; u(\cd))
					=\mathbb{E} \int_{0}^{\t_k}\big\langle\big[R(s,\a(s))+ D(s,\a(s))^{\top} P(s,\a(s)) D(s,\a(s))\big] v(s), v(s)\big\rangle d s.
				\end{align*}
				By the proof of the assertion (i) of \autoref{21.8.31.3}, we have that for any choosing $i_0\in\cS$,
				$$ J(0, 0,i_0 ; u(\cd)) \geqslant 0, \q \forall u(\cd) \in \mathcal{U}[0, \tau_{k}],$$
				and note that $v(\cd) \in \mathcal{U}[0, T]$ is arbitrary, so we have
				\begin{equation}\label{21.9.7.5}
					\hat{R}(s,\a(s))=R(s,\a(s))+ D(s,\a(s))^{\top} P(s,\a(s)) D(s,\a(s))
					\geqslant 0, \q \text {a.e. on }\left[0, \tau_{k}\right], \text { a.s..}
				\end{equation}
				Due to that the process $\boldsymbol{X}^{-1}(\cd)$ is continuous, $\boldsymbol{U}(\cd)$ and $\boldsymbol{Z}(\cd)$ are square-integrable, and the processes $P(\cd,\a(\cd))$, $C(\cd)$ and $D(\cd)$ are bounded,  from \rf{21.9.6.2} we have
				$$
				\int_{0}^{T}\left[|\Theta(s,\a(s))|^{2}+|\Lambda(s)|^{2}\right] d s<\infty, \ \text { a.s.}.
				$$
				This implies that $\lim _{k \rightarrow \infty} \tau_{k}=T$ almost surely.
				Then result \rf{21.9.7.3} still holds by letting $k \rightarrow \infty$ in \rf{21.9.7.5}.
				
				\ms
				
				\textbf{Step 3}.
				%							%
				In order to obtain the stronger property \rf{21.9.6.3}, we take an arbitrary but fixed $\epsilon \in(0, \e)$ and consider the following stochastic LQ  problem of minimizing
				$$
				\begin{aligned}
					\ds J_{\epsilon}(t,\xi,\vf ; u(\cd))%=J(t,\xi,\vf ; u)-\epsilon \mathbb{E} \int_{t}^{T}|u(s)|^{2} d s \\
					=&\mathbb{E}_t\bigg[\langle G(\a(T) X(T), X(T)\rangle\\
					\ns\ds &+\int_{t}^{T}\left\langle\left(\begin{array}{cc}
						Q(s,\a(s)) & S(s,\a(s))^{\top} \\
						S(s,\a(s)) & R(s,\a(s))-\epsilon I_{m}
					\end{array}\right)\left(\begin{array}{c}
						X(s) \\
						u(s)
					\end{array}\right),\left(\begin{array}{c}
						X(s) \\
						u(s)
					\end{array}\right)\right\rangle d s\bigg],
				\end{aligned}
				$$
				where $X(\cd)$ is the solution of state equation \rf{state}. Clearly, with $\e$ replaced by $\e-\epsilon$, the conditions of \autoref{21.9.3.1} still hold for the new cost functional $J_{\epsilon}(t,\xi,\vf ; u(\cd))$. Thus, there is a process
				$P_{\epsilon}(\cd)$ such that
				$$
				V_{\epsilon}(t,\xi,\vf) \triangleq \inf _{u(\cd) \in \mathcal{U}[t, T]} J_{_{\epsilon}}(t,\xi,\vf ; u(\cd))=\left\langle P_{\epsilon}(t,\vf) \xi, \xi\right\rangle, \q  \forall(t,\xi,\vf) \in[0, T] \times L_{\sF_{t}}^{\infty}(\Omega ; \mathbb{R}^{n})\ts
				L_{\sF_{t}^\a}^{2}(\Omega ; \cS).
				$$
				Then, by the previous discussion, we have
				$$
				R(s,\a(s))-\epsilon I_{m}+D(s,\a(s))^{\top} P_{\epsilon}(s,\a(s)) D(s,\a(s)) \geqslant 0,  \q \text{a.e. on }[0, T], \text { a.s.}.
				$$
				Now, by the definition of $J_{\epsilon}(t,\xi,\vf ; u(\cd))$, we deduce that
				%							%
				%
				\begin{align*}
					\ds V(t,\xi,\vf)&=\inf _{u(\cd) \in \mathcal{U}[t, T]} J(t,\xi,\vf ; u(\cd)) \\
					\ns\ds &\geqslant \inf _{u(\cd) \in \mathcal{U}[t, T]} J_{\epsilon}(t,\xi,\vf ; u(\cd))=V_{\epsilon}(t,\xi,\vf), \q  \forall(t,\xi,\vf) \in[0, T] \times L_{\sF_{t}}^{\infty}\left(\Omega ; \mathbb{R}^{n}\right)\ts L_{\sF_{t}^\a}^{2}(\Omega ; \cS),
				\end{align*}
				from which we see that
				$$P(t,\vf) \geqslant P_{\epsilon}(t,\vf),\q \forall t\in[0,T],\ \vf\in L_{\sF_{t}^\a}^{2}(\Omega ; \cS),$$
				and therefore
				\begin{align*}
					&\hat{R}(s,\a(s))\geqslant R(s,\a(s))+D(s,\a(s))^{\top} P_{\epsilon}(s,\a(s)) D(s,\a(s)) \geqslant \varepsilon I_{m}, \ \text { a.e. on }[0, T], \text { a.s.. }
				\end{align*}
				%
				%Thus, from \tc{the irreducibility of the Markov chain}, we have
				%							%
				%							%
				%							\begin{align*}
					%								\hat{R}(s,i)=R(s,i)+D(s,i)^{\top} P(s,i) D(s,i)\geqslant \varepsilon I_{m}, \ \text { a.e. on } [0, T]\ts\cS, \text { a.s..}
					%							\end{align*}
				%							%
				%
				Finally, note that $\epsilon \in(0, \e)$ is arbitrary, so property \rf{21.9.6.3} holds. This completes the proof.
				%							%
			\end{proof}
			
			Based on the above preparations, we now can prove \autoref{21.9.3.1} and \autoref{21.11.26.1}.
			
			\begin{proof}[Proof of \autoref{21.9.3.1}]
				%							%
				On one hand, from \autoref{21.9.8.2}, we see that the bounded process $P(\cd,\a(\cd))$ in \rf{21.8.21.6} and the processes $\L(\cd)$ and $\zeta(\cd)$ defined by \rf{21.9.6.2} satisfy BSDE \rf{21.9.3.4} and relation \rf{21.9.7.1}.
				On the other hand, \autoref{21.9.8.3} implies that
				$$\hat{R}(s,\a(s))=R(s,\a(s))+D(s,\a(s))^\top P(s,\a(s))D(s,\a(s))\ges\e I_m,\q \hb{a.e. on $[0,T]$, a.s.,}$$
				which, together with relation \rf{21.9.7.1}, deduces that
				\begin{equation}\label{21.9.8.4}
					\begin{aligned}
						\Theta(s,\a(s))&=
						-\hat{R}(s,\a(s))^{-1}\hat{S}(s,\a(s)),\q
						\text {a.e. on }[0, T], \text { a.s..}
					\end{aligned}
				\end{equation}
				%							%
				%							\begin{equation}\label{21.9.8.4}
					%								\begin{aligned}
						%									\Theta(s,\a(s))&=
						%									\big[R(s,\a(s))+D(s,\a(s))^{\top} P(s,\a(s)) D(s,\a(s))\big]^{-1}
						%									\cdot
						%									\big[ B(s,\a(s))^{\top} P(s,\a(s))\\
						%									%
						%									&\q +D(s,\a(s))^{\top} P(s,\a(s)) C(s,\a(s))
						%									%
						%									+D(s,\a(s))^{\top} \Lambda(s)+S(s,\a(s))\big],
						%									%
						%									\quad \text { a.e. on }[0, T], \text { a.s. }
						%									%
						%								\end{aligned}
					%							\end{equation}
				%							%
				Substituting \rf{21.9.8.4} into BSDE \rf{21.9.3.4} yields that
				\begin{equation}\label{21.9.11.1}
					\left\{\begin{aligned}
						\ds d P(s,\a(s))=&\ [\hat{\Psi}(s,\a(s))-\hat{Q}(s,\a(s))] d s+\Lambda(s) d W(s)
						+\z(s)\bullet d\wt{N}(s), \q s \in[0, T], \\
						\ns\ds P(T,\a(T))=&\ G(\a(T)), \q \a(0)=i_0,
					\end{aligned}\right.
				\end{equation}
				where $\hat{Q}(\cd)$ is defined by \eqref{hatsq}, and
				$$\hat{\Psi}(s,\a(s))\deq \hat{S}(s,\a(s))^\top\hat{R}(s,\a(s))^{-1}\hat{S}(s,\a(s)),\q s\in[0,T].$$
				Then SRE \rf{21.8.23.6} follows easily from \rf{21.9.11.1}, and $(\hat P(\cd,\a(\cd)),\hat\L(\cd),\hat\zeta(\cd))$, solution of SRE \rf{21.8.23.6}, coincide with $( P(\cd,\a(\cd)),\L(\cd),\zeta(\cd))$, solution of BSDE \rf{21.9.11.1}, i.e.,
				\begin{equation}\label{22.6.26.6}
					\hat P(s,\a(s))=P(s,\a(s)),\q \hat\L(s)=\L(s),\q \hat\zeta(s)=\zeta(s),\q s\in[0,T].
				\end{equation}
				In the following, we still adopt $( P(\cd,\a(\cd)),\L(\cd),\zeta(\cd))$ in order to keep the consistency of symbols.
				
				\ms
				
				It remains to prove that the processes $\Lambda(\cd)$  and $\zeta(\cd)$ are square-integrable.
				%							%
				Note that in BSDE \rf{21.9.11.1}, the matrix-valued processes $A(\cd)$, $C(\cd)$,  $Q(\cd)$ and $P(\cd,\a(\cd))$ are all bounded and the process $\hat{\Psi}(\cd)$ is positive semi-definite, so we can choose a positive constant $K$ such that
				\begin{equation}\label{21.9.9.2}
					\left\{\begin{aligned}
						\ds	&\operatorname{tr}[P(s,\a(s))]+|P(s,\a(s))|^{2}  \leqslant K, \qq
						\operatorname{tr}[\hat{Q}(s,\a(s))]  \leqslant K[1+|\Lambda(s)|], \\
						\ns\ds	&\operatorname{tr}[P(s,\a(s)) \hat{Q}(s,\a(s))]  \leqslant|P(s,\a(s)) |\cdot | \hat{Q}(s,\a(s))|
						\leqslant K[1+|\Lambda(s)|] ,\\
						\ns\ds 	&\operatorname{tr}[-P(s,\a(s)) \hat{\Psi}(s,\a(s))]  \leqslant \lambda_{\max }[-P(s,\a(s))] \operatorname{tr}[\hat{\Psi}(s,\a(s))] \leqslant K \operatorname{tr}[\hat{\Psi}(s,\a(s))],
					\end{aligned}\right.
				\end{equation}
				for Lebesgue-almost every $s, \mathbb{P}$-a.s.. In the last inequality, we adopt Theorem 7.4.1.1 of Horn--Johnson \cite{Horn-Johnson2012}. %(see Theorem 7.4.1.1 \autoref{21.9.8.1}.
				In the following, we denote by the same letter $K$ a generic positive constant whose value may be different from line to line. Define for each $m \geqslant 1$ the stopping time (with the convention $\inf \varnothing=\infty)$
				\begin{equation}\label{22.6.13.1}
					\lambda_{m}=\inf \Big\{t \in[0, T] ; \int_{0}^{t}\big[|\Lambda(s)|^{2}
					+\sum_{k,l=1}^D|\zeta_{kl}(s)|^2\l_{kl}(s)I_{\{\a(s-)=k\}} \big] d s \geqslant m\Big\},
				\end{equation}
				which implies that $\lim _{m \rightarrow \infty} \lambda_{m}=\infty$ almost surely.
				Then we have							
				\begin{equation}\label{21.9.9.1}
					\begin{aligned}
						P\left(t \wedge \lambda_{m},\a(t \wedge \lambda_{m})\right)=&\
						P(0,i_0)+\int_{0}^{t \wedge \lambda_{m}}[\hat{\Psi}(s,\a(s))-\hat{Q}(s,\a(s))] d s\\
						& +\int_{0}^{t \wedge \lambda_{m}} \Lambda(s) d W(s)
						+\int_{0}^{t \wedge \lambda_{m}}\zeta(s)\bullet d\wt{N}(s).
					\end{aligned}
				\end{equation}
				From the definition of $\l_m$, it is easy to see that the following processes
				$$
				\begin{aligned}
					\Big\{\int_{0}^{t \wedge \lambda_{m}} \Lambda(s) d W(s)\Big\}_{t\in[0,T]}&=\ \Big\{\int_{0}^{t} \Lambda(s) \mathbf{1}_{\left\{s \leqslant \lambda_{m}\right\}} d W(s)\Big\}_{t\in[0,T]},\\% ;\ t\in[0,T]\Big\},\\
					\Big\{\int_{0}^{t \wedge \lambda_{m}}\zeta(s)\bullet d\wt{N}(s)\Big\}_{t\in[0,T]}&=\ \Big\{\int_{0}^{t} \zeta(s)
					\mathbf{1}_{\left\{s \leqslant \lambda_{m}\right\}}\bullet d\wt{N}(s)\Big\}_{t\in[0,T]},
				\end{aligned}$$
				are matrix of square-integrable martingales w.r.t. the filtration $\dbF=\{\sF_t; 0\les t<\i\}$
				and $\dbF^\a=\{\sF^\a_t; 0\les t<\i\}$, respectively.
				Therefore, taking expectations on both side of \rf{21.9.9.1}, we have
				\begin{align*}
					\ds \dbE\big[ P\left(t \wedge \lambda_{m},\a(t \wedge \lambda_{m})\right)\big]=
					P(0,i_0)+\dbE\int_{0}^{t \wedge \lambda_{m}}\big[\hat{\Psi}(s,\a(s))-\hat{Q}(s,\a(s))\big] d s.
				\end{align*}
				Thus, combining with \rf{21.9.9.2}, we see that
				\begin{align}\label{21.9.9.3}
					\mathbb{E} \int_{0}^{t \wedge \lambda_{m}} \operatorname{tr}[\hat{\Psi}(s,\a(s))] d s
					&=\mathbb{E} \operatorname{tr}\big[P\left(t \wedge \lambda_{m},\a(t \wedge \lambda_{m})\right)-P(0,i_0)\big]
					+\mathbb{E} \int_{0}^{t \wedge \lambda_{m}} \operatorname{tr}[\hat{Q}(s,\a(s))] d s \nn\\
					& \leqslant K\Big[1+\mathbb{E} \int_{0}^{t \wedge \lambda_{m}}|\Lambda(s)| d s\Big].
				\end{align}
				On the other hand, for BSDE \rf{21.9.11.1}, applying It\^o's formula to $P(s,\a(s))^2$ and denoting $\zeta^2(s)\deq(\zeta_{kl}(s)^2)$, we have
				\begin{align*}
					d[P(s,\a(s))]^{2}=& \Big\{P(s,\a(s))\big[\hat{\Psi}(s,\a(s))-\hat{Q}(s,\a(s))\big]
					+\big[\hat{\Psi}(s,\a(s))-\hat{Q}(s,\a(s))\big] P(s,\a(s))\\
					& +\Lambda(s)^{2}+\sum_{k,l=1}^D\zeta^2_{kl}(s)\l_{kl}(s)I_{\{\a(s-)=k\}} \Big\}ds
					+\big[P(s,\a(s)) \Lambda(s)+\Lambda(s) P(s,\a(s))\big] d W(t)\\
					&+ \sum_{k,l=1}^D 2P(s-)\zeta_{kl}(s)d\widetilde{N}_{kl}(s)
					+\zeta^2(s) \bullet d\widetilde{N}(s).
				\end{align*}
				Now, note \rf{22.6.13.1}, a similar argument shows that
				\begin{align*}
					\dbE[P(s,\a(s))]^{2}=&P(0,i_0)+ \dbE\int_0^{t \wedge \lambda_{m}}\Big\{P(s,\a(s))\big[\hat{\Psi}(s,\a(s))-\hat{Q}(s,\a(s))\big] \\
					&  +\big[\hat{\Psi}(s,\a(s))-\hat{Q}(s,\a(s))\big]P(s,\a(s)) +\Lambda(s)^{2}
					+\sum_{k,l=1}^D\zeta_{kl}(s)^2\l_{kl}(s)I_{\{\a(s-)=k\}} \Big\} ds.
				\end{align*}
				Combining with \rf{21.9.9.2}-\rf{21.9.9.3} and recalling the Frobenius norm, we have
				\begin{align*}
					&\mathbb{E} \int_{0}^{t \wedge \lambda_{m}}|\Lambda(s)|^{2} d s
					+\mathbb{E} \int_{0}^{t \wedge \lambda_{m}}
					\sum_{k,l=1}^D|\zeta_{kl}(s)|^2\l_{kl}(s)I_{\{\a(s-)=k\}}  d s\\
					=\,&\operatorname{tr}\bigg[\mathbb{E} \int_{0}^{t \wedge \lambda_{m}}[\Lambda(s)]^{2} d s
					+\mathbb{E} \int_{0}^{t \wedge \lambda_{m}}
					\sum_{k,l=1}^D[\zeta_{kl}(s)]^2\l_{kl}(s)I_{\{\a(s-)=k\}}  d s\bigg]\\
					=\,&\mathbb{E}\left|P\left(t\wedge\lambda_{m},\a(t\wedge\lambda_{m})\right)\right|^{2}-|P(0,i_0)|^{2}
					+2 \mathbb{E} \int_{0}^{t \wedge \lambda_{m}} \operatorname{tr}[P(s,\a(s)) \hat{Q}(s,\a(s))] d s\\
					\,& +2 \mathbb{E} \int_{0}^{t \wedge \lambda_{m}} \operatorname{tr}[-P(s,\a(s)) \hat{\Psi}(s,\a(s))] d s \\
					\leqslant \,&  K+K \mathbb{E} \int_{0}^{t \wedge \lambda_{m}}[1+|\Lambda(s)|] d s
					+K \mathbb{E} \int_{0}^{t \wedge \lambda_{m}} \operatorname{tr}[\hat{\Psi}(s,\a(s))] d s \\
					\leqslant \,& K+K\mathbb{E} \int_{0}^{t \wedge \lambda_{m}}|\Lambda(s)| d s
					\les  K+2 K^{2}+\frac{1}{2} \mathbb{E} \int_{0}^{t \wedge \lambda_{m}}|\Lambda(s)|^{2} d s,
				\end{align*}
				where in the last inequality we employ the Cauchy-Schwarz's inequality. Hence, we obtain that
				\begin{align*}
					\frac{1}{2}\mathbb{E} \int_{0}^{t \wedge \lambda_{m}}|\Lambda(s)|^{2} d s
					+\mathbb{E} \int_{0}^{t \wedge \lambda_{m}}
					\sum_{k,l=1}^D|\zeta_{kl}(s)|^2\l_{kl}(s)I_{\{\a(s-)=k\}} d s
					\les K+2 K^{2}.
				\end{align*}
				Due to that $\lim_{m\rightarrow\i}\l_m=\i$ almost surely and $K$ does not depend on $m$ and $t$, we conclude that the processes $\L(\cd)$ and $\zeta(\cd)$ are square-integrable by letting $m\rightarrow\i$ firstly and then $t\uparrow T$.
			\end{proof}

			\begin{proof}[Proof of \autoref{21.11.26.1}]
				Note that \autoref{21.8.21.4} and \autoref{21.8.21.2} imply that  Problem (M-SLQ) is uniquely open-loop solvable at any initial time $t<T$, and the open-loop optimal control $u^{*}(\cd)$ w.r.t. $(t, \xi,\vf)$ is given by
				$$u^{*}(s)=\left(u_{1}(s), \cdots, u_{n}(s)\right) \xi , \q s\in[t,T].$$
				Therefore, for the sake of the optimal control w.r.t. any initial triple $(t, \xi,\vf) \in \mathcal{D}$, it is sufficient to determine the open-loop optimal control $u_{j}(\cd)=\left\{u_{j}(s) ; s\in[0,T]\right\}$ w.r.t. $\left(0, e_{j},i_0\right)$ for each $j=1, \ldots, n$.
				From \autoref{21.9.2.1}, we see that the process $\boldsymbol{X}(\cd)=\{\boldsymbol{X}(s) ; 0 \leqslant s \leqslant T\}$ is invertible. On the other hand,
				\autoref{21.9.8.2} tells us that finding the open-loop optimal controls $u_{1}, \cdots, u_{n}$ are equivalent to finding
				$$\Theta(s,\a(s))=\boldsymbol{U}(s) \boldsymbol{X}(s)^{-1}, \q s\in[0,T].$$
				The latter can be accomplished by solving SRE \rf{21.8.23.6}, whose solvability is obtained by \autoref{21.9.3.1}. In fact, from the proof of \autoref{21.9.3.1}, we see that $\Theta(\cd)$ is actually determined by \rf{21.9.8.4}.
				Summarizing these observations, we conclude the closed-loop representation \rf{22.6.7.1}.
			\end{proof}

			\section{Proof of the auxiliary results}\label{ProofPri}
			
			In this section, we prove the results of  \autoref{Sec3.2}. Inspired by Mou--Yong \cite{Mou-Yong-06} and Chen--Yong \cite{Chen-Yong-01}, we present a representation of the cost functional, which is characterized as a bilinear form on a Hilbert space in terms of the adapted solutions of some forward-backward stochastic differential equations, using the technique of It\^{o}'s formula with jumps.
			Let us present a simple lemma first.

			\begin{lemma}\label{Lemma3.1}\sl
				Under {\rm (\textbf{H})}, for any initial triple $(t,\xi,\vf)\in\cD$ and a control $u(\cd)\in\cU[t,T]$, we have
				\begin{equation}\label{3.1}
					\begin{aligned}
						{J}(t,\xi,\vf;u(\cd))=&\lan Y(t),\xi \ran
						+\dbE\Big[\int_t^T\big\lan 	F(s,\a(s), X(s), Y(s), Z(s), u(s)), u(s)\big\ran ds\Big|\sF_t\Big],
					\end{aligned}
				\end{equation}
				where the quadruple $(X(\cd),Y(\cd),Z(\cd),\Gamma(\cd))$ is the solution of SDE \rf{state} and BSDE \rf{21.8.20.1}.
				%
				%%
				%\bel{3.2}\left\{\ba{ll}
				%%
				%\ds dX(s)=\big[A(s,\a(s))X(s)+B(s,\a(s))u(s)\big]ds
				%+\big[C(s,\a(s))X(s)+D(s,\a(s))u(s)\big]dW(s),\\
				%%
				%\ns\ds dY(s)=-\widetilde{F}(X(s), Y(s), Z(s), u(s))ds+Z(s)dW(s)+\G(s)\bullet d\wt{N}(s)\q s\in[t,T],\\
				%%
				%\ns\ds X(t)=\xi,\q\a(t)=\vf,\q  Y(T)=G(\a(T))X(T).
				%%
				%\ea\right.\ee
				%%
			\end{lemma}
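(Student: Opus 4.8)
The plan is to apply It\^o's formula with jumps to the scalar process $s\mapsto\lan Y(s),X(s)\ran$ over $[t,T]$, where $(X(\cd),Y(\cd),Z(\cd),\G(\cd))$ is the adapted solution of the forward equation \rf{state} together with the backward equation \rf{21.8.20.1}. Because $X(\cd)$ has continuous paths, it has no covariation with the pure-jump martingale $\int_t^\cd\G(s)\bullet d\wt N(s)$ occurring in $dY$, and $X(s-)=X(s)$; the only covariation term comes from the two Brownian diffusions, namely $\lan Z(s),C(s,\a(s))X(s)+D(s,\a(s))u(s)\ran\,ds$. Collecting the drifts of $X(\cd)$ and $Y(\cd)$, one gets
\begin{align*}
d\lan Y(s),X(s)\ran=&\ \Big[-\lan\widetilde{F}(s,\a(s),X(s),Y(s),Z(s),u(s)),X(s)\ran
+\lan Y(s),A(s,\a(s))X(s)+B(s,\a(s))u(s)\ran\\
&\ \ +\lan Z(s),C(s,\a(s))X(s)+D(s,\a(s))u(s)\ran\Big]\,ds+(\hb{martingale increments}).
\end{align*}

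The heart of the argument is the algebraic simplification of this $ds$-drift. Recalling from \autoref{Sec3.2} that $\widetilde{F}(s,i,x,y,z,u)=A(s,i)^\top y+C(s,i)^\top z+Q(s,i)x+S(s,i)^\top u$ and $F(s,i,x,y,z,u)=B(s,i)^\top y+D(s,i)^\top z+S(s,i)x+R(s,i)u$, the terms $\lan Y(s),A(s,\a(s))X(s)\ran+\lan Z(s),C(s,\a(s))X(s)\ran$ cancel exactly against two of the terms inside $\lan\widetilde{F}(\cd),X(s)\ran$; using the symmetry of $Q(s,\a(s))$ and $R(s,\a(s))$, the remaining terms rearrange into
$$-\llan\begin{pmatrix}Q(s,\a(s))&S(s,\a(s))^\top\\S(s,\a(s))&R(s,\a(s))\end{pmatrix}\begin{pmatrix}X(s)\\u(s)\end{pmatrix},\begin{pmatrix}X(s)\\u(s)\end{pmatrix}\rran+\lan F(s,\a(s),X(s),Y(s),Z(s),u(s)),u(s)\ran.$$
So $d\lan Y(s),X(s)\ran$ equals this quantity in $ds$ plus genuine local-martingale increments driven by $W(\cd)$ and by $\wt N(\cd)$.

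To finish, I would integrate over $[t,T]$, insert $X(t)=\xi$ and the terminal condition $Y(T)=G(\a(T))X(T)$, and then take the conditional expectation $\dbE_t[\cd]=\dbE[\cd\,|\,\sF_t]$. Under {\rm(\textbf{H})}, the standard a priori estimates for linear SDEs and for BSDEs with regime-switching jumps give $X(\cd)\in L_\dbF^2(\O;C([t,T];\dbR^n))$ and $(Y(\cd),Z(\cd),\G(\cd))$ square-integrable, so the $dW$- and $d\wt N$-stochastic integrals are true martingales and vanish under $\dbE_t$. Moving the terminal term $\dbE_t\lan G(\a(T))X(T),X(T)\ran$ and the running integral of the quadratic form to the left and identifying their sum with ${J}(t,\xi,\vf;u(\cd))$ as defined in \rf{cost} produces precisely \rf{3.1}. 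The only point requiring genuine care is the bookkeeping in It\^o's formula with jumps — verifying that the $\G$-term enters only as a martingale (thanks to the continuity of $X(\cd)$) and that every stochastic integral is a true martingale before moving the conditional expectation inside; the drift cancellation itself is elementary linear algebra.
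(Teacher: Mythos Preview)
Your proposal is correct and follows essentially the same approach as the paper: apply It\^o's formula (with jumps) to $\lan Y(s),X(s)\ran$ on $[t,T]$, take the conditional expectation $\dbE_t[\cd]$ so the martingale terms vanish, and then substitute the resulting expression for $\dbE_t\lan G(\a(T))X(T),X(T)\ran$ into the definition of $J(t,\xi,\vf;u(\cd))$. You have merely written out in detail the drift cancellation that the paper leaves implicit in the words ``trivial by applying It\^o's formula.''
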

			
			\begin{proof}
				The proof is trivial by applying It\^{o}'s formula to $\lan Y(\cd),X(\cd)\ran$ on $[t,T]$. In fact, using It\^{o}'s formula to
				$s\mapsto \lan Y(s),X(s)\ran$ and then taking the conditional expectation yields that
				\bel{3.3}\ba{ll}
				\ds \dbE\big[\lan G(\a(T))X(T),X(T)\ran|\sF_t\big]=\lan Y(t),\xi \ran
				+\dbE\Big[\int_t^T\big[\big\lan B(s,\a(s))^\top Y(s)+D(s,\a(s))^\top Z(s)\\
				\ns\ds \qq\qq\qq\hspace{2.8cm}-S(s,\a(s))X(s),u(s)\big\ran-\big\lan Q(s,\a(s))X(s),X(s)\big\ran\big] ds\Big|\sF_t\Big].
				\ea\ee
				Then substituting \rf{3.3} into the cost functional ${J}(t,\xi,\vf;u(\cd))$ leads to \rf{3.1}.
			\end{proof}

			Next, we let $(\tilde{X}(\cd),\tilde{Y}(\cd),\tilde{Z}(\cd),\tilde{\Gamma}(\cd))$ and
			$(\bar{X}(\cd),\bar{Y}(\cd),\bar{Z}(\cd),\bar{\Gamma}(\cd))$ be the adapted solutions of the following decoupled FBSDEs, respectively,
			\begin{align}
				\label{3.4}&\left\{\ba{ll}
				\ds d\tilde{X}(s)=\big[A(s,\a(s))\tilde{X}(s)+B(s,\a(s))u(s)\big]ds
				+\big[C(s,\a(s))\tilde{X}(s)+D(s,\a(s))u(s)\big]dW(s),\\
				\ns\ds d\tilde{Y}(s)=-\widetilde{F}(s,\a(s), \tilde{X}(s), \tilde{Y}(s), \tilde{Z}(s), u(s))ds+\tilde{Z}(s)dW(s)+
				\tilde{\G}(s)\bullet d\wt{N}(s),\q s\in[t,T],\\
				\ns\ds \tilde{X}(t)=0,\q\a(t)=\vf,\q  \tilde{Y}(T)=G(\a(T))\tilde{X}(T),
				\ea\right.\\
				\label{3.5}&\left\{\ba{ll}
				\ds d\bar{X}(s)=A(s,\a(s))\bar{X}(s)ds+C(s,\a(s))\bar{X}(s)dW(s),\q s\in[t,T],\\
				\ns\ds d\bar{Y}(s)=-\widetilde{F}_0(s,\a(s), \bar{X}(s),  \bar{Y}(s), \bar{Z}(s))ds
				+\bar{Z}(s)dW(s)+\bar{\G}(s)\bullet d\widetilde{N}(s),\\
				\ns\ds \bar{X}(t)=\xi,\q\a(t)=\vf,\q  \bar{Y}(T)=G(\a(T))\bar{X}(T).
				\ea\right.
			\end{align}
			%\bel{3.4}\left\{\ba{ll}
			%%
			%d\tilde{X}(s)=\big[A(s,\a(s))\tilde{X}(s)+B(s,\a(s))u(s)\big]ds
			%+\big[C(s,\a(s))\tilde{X}(s)+D(s,\a(s))u(s)\big]dW(s),\\
			%%
			%d\tilde{Y}(s)=-\widetilde{F}(\tilde{X}(s), \tilde{Y}(s), \tilde{Z}(s), u(s))ds+\tilde{Z}(s)dW(s)+
			%\tilde{\G}(s)\bullet d\wt{N}(s)\qq s\in[t,T],\\
			%%
			%\tilde{X}(t)=0,\qq\a(t)=i,\qq  \tilde{Y}(T)=G(\a(T))\tilde{X}(T),
			%%
			%\ea\right.\ee
			%%
			%%
			%\bel{3.5}\left\{\ba{ll}
			%%
			%d\bar{X}(s)=A(s,\a(s))\bar{X}(s)ds+C(s,\a(s))\bar{X}(s)dW(s),\q s\in[t,T],\\
			%%
			%d\bar{Y}(s)=-\widetilde{F}_0(\bar{X}(s),  \bar{Y}(s), \bar{Z}(s))ds
			%+\bar{Z}(s)dW(s)+\bar{\G}(s)\bullet d\widetilde{N}(s),\\
			%%
			%\bar{X}(t)=\xi,\qq\a(t)=i,\qq  \bar{Y}(T)=G(\a(T))\bar{X}(T).
			%%
			%\ea\right.\ee
			%
			Then the adapted solution $(X(\cd),Y(\cd),Z(\cd),\Gamma(\cd))$ of SDE \rf{state} and BSDE \rf{21.8.20.1} could be written as the sum of $(\tilde{X}(\cd),\tilde{Y}(\cd),\tilde{Z}(\cd),\tilde{\Gamma}(\cd))$ and $(\bar{X}(\cd),\bar{Y}(\cd),\bar{Z}(\cd),\bar{\Gamma}(\cd))$, i.e.,
			$$X(s)=\tilde X(s)+\bar X(s),\q~ Y(s)=\tilde Y(s)+\bar Y(s),\q~
			Z(s)=\tilde Z(s)+\bar Z(s),\q~ \G(s)=\tilde \G(s)+\bar \G(s),\q s\in[t,T].$$
			%
			%Note that $(\tilde{X},\tilde{Y},\tilde{Z},\tilde{\Gamma})$ depends linearly on the control $u$ and $(\bar{X},\bar{Y},\bar{Z},\bar{\Gamma})$ depends linearly on $\xi$ alone. Define the following two linear operators
			%%
			%$$\cN_t:\cU[t,T]\ra\cU[t,T],\qq \cL_t:\cX_t\ra\cU[t,T]$$
			%%
			%as follows:
			In what follows, for any $u(\cd),v(\cd)\in\cU[t,T]$, we set %For the control space %Note that
			%$\cU[t,T]\equiv L_\dbF^2(t,T;\dbR^m)$,  %which are both Hilbert spaces under their natural Euclidean inner products. Therefore,
			%we shall use
			%
			\begin{align*}%\label{eq:innderproductdif}
				[[ u, v ]]_t=\dbE_t\int_t^T\langle u(s),v(s)\rangle ds,\quad  [[ u, v ]]=\dbE\int_t^T\langle u(s),v(s)\rangle ds.
			\end{align*}%
			%
			%denote the inner product of $u(\cd),v(\cd)\in\cU[t,T]$ to  distinguish the Euclidean inner product on $\cU[t,T]$.
			%%
			Now, for any $u(\cd)\in\cU[t,T]$ and $\xi\in\cX_t\deq L^2_{\sF_t}(\O;\dbR^n)$,  we define two linear operators  $\cN_t$  and  $\cL_t$ as follows: for $s\in[t,T]$,
			\begin{align}
				\label{3.6}
				[\cN_tu](s,\a(s))=F(s,\a(s), \tilde{X}(s), \tilde{Y}(s), \tilde{Z}(s), u(s)), \q	[\cL_t\xi](s,\a(s))=F_0(s,\a(s), \bar{X}(s),  \bar{Y}(s), \bar{Z}(s)).
			\end{align}
			%\bel{3.6}[\cN_tu](s,\a(s))=B(s,\a(s))^\top \tilde{Y}(s)+D(s,\a(s))^\top \tilde{Z}(s)
			%+S(s,\a(s))\tilde{X}(s)+R(s,\a(s))u(s),\q~ s\in[t,T],\ee
			%%
			%and for any $\xi\in\cX_t$, $\cL_t$ is defined by
			%%
			%\bel{3.7}[\cL_t\xi](s,\a(s))=B(s,\a(s))^\top \bar{Y}(s)+D(s,\a(s))^\top \bar{Z}(s)
			%+S(s,\a(s))\bar{X}(s),\q~ s\in[t,T].\ee
			%
			
			Then, for these two operators, we have the following result.
			\begin{lemma}\label{Proposition3.2} \sl
				Under the condition {\rm(\textbf{H})}, the following two assetions hold:
				\begin{itemize}
					\item [{\rm (i)}] On the Hilbert space, the linear operator $\cN_t$ defined in \rf{3.6} is a bounded self-adjoint operator.
					\item [{\rm (ii)}]
					From the Hilbert space $\cX_t$ into the Hilbert space $\cU[t,T]$, the linear operator $\cL_t$ defined in \rf{3.6} is a bounded operator. Moreover, there exists a positive constant $K$, independent of $(t,\xi,\vf)$, such that
					\bel{3.8}[[\cL_t\xi,\cL_t\xi]]\les K\dbE|\xi|^2,\q \forall \xi\in\cX_t.\ee
				\end{itemize}
			\end{lemma}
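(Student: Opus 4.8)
The plan is to deduce the boundedness of $\cN_t$ and $\cL_t$ from the standard a priori estimates for linear forward-backward SDEs with Markovian regime switching, and to obtain the self-adjointness of $\cN_t$ from an It\^o-formula identity that exposes the symmetry of $G$, $Q$ and $R$. For the boundedness of $\cN_t$: given $u(\cd)\in\cU[t,T]$, let $(\tilde{X}^{u},\tilde{Y}^{u},\tilde{Z}^{u},\tilde{\G}^{u})$ be the solution of \rf{3.4}. Since $\tilde{X}^{u}(t)=0$ and $A,B,C,D$ are bounded under {\rm(\textbf{H})}, the forward estimate gives $\dbE\int_t^T|\tilde{X}^{u}(s)|^2ds\les K\,\dbE\int_t^T|u(s)|^2ds$, and then the estimate for the adjoint BSDE \rf{21.8.20.1} (Lipschitz generator with bounded coefficients, terminal value $G(\a(T))\tilde{X}^{u}(T)$) yields
\[
\dbE\sup_{s\in[t,T]}|\tilde{Y}^{u}(s)|^2+\dbE\int_t^T\Big(|\tilde{Z}^{u}(s)|^2+\sum_{k,l=1}^D|\tilde{\G}^{u}_{kl}(s)|^2\l_{kl}(s)I_{\{\a(s-)=k\}}\Big)ds\les K\,\dbE\int_t^T|u(s)|^2ds ,
\]
with $K$ depending only on $T$ and the bounds in {\rm(\textbf{H})} (the set $\cS$ being finite). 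As $[\cN_tu](s,\a(s))=B(s,\a(s))^\top\tilde{Y}^{u}(s)+D(s,\a(s))^\top\tilde{Z}^{u}(s)+S(s,\a(s))\tilde{X}^{u}(s)+R(s,\a(s))u(s)$ (recall \rf{3.6}) is affine in $(\tilde{X}^{u},\tilde{Y}^{u},\tilde{Z}^{u},u)$ with bounded coefficients, it follows that $[[\cN_tu,\cN_tu]]\les K[[u,u]]$, so $\cN_t$ is bounded on $\cU[t,T]$.

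For the self-adjointness, take $u(\cd),v(\cd)\in\cU[t,T]$ and apply It\^o's formula with jumps to $s\mapsto\langle\tilde{Y}^{u}(s),\tilde{X}^{v}(s)\rangle$ on $[t,T]$, where $\tilde{X}^{v}$ is the forward part of \rf{3.4} driven by $v$. Since $\tilde{X}^{v}(\cd)$ is continuous, its quadratic covariation with the $\wt{N}$-martingale part of $\tilde{Y}^{u}(\cd)$ vanishes, so the jumps contribute nothing to the product rule, and after a routine localization every $W$- and $\wt{N}$-stochastic integral has zero expectation. Using $\tilde{X}^{v}(t)=0$, $\tilde{Y}^{u}(T)=G(\a(T))\tilde{X}^{u}(T)$, and the definitions of $F$ and $\widetilde{F}$, the identity collapses to
\begin{align*}
[[\cN_tu,v]]=\,&\dbE\big\langle G(\a(T))\tilde{X}^{u}(T),\tilde{X}^{v}(T)\big\rangle+\dbE\int_t^T\big\langle R(s,\a(s))u(s),v(s)\big\rangle ds\\
&+\dbE\int_t^T\Big(\big\langle Q(s,\a(s))\tilde{X}^{u}(s),\tilde{X}^{v}(s)\big\rangle+\big\langle S(s,\a(s))\tilde{X}^{u}(s),v(s)\big\rangle+\big\langle S(s,\a(s))\tilde{X}^{v}(s),u(s)\big\rangle\Big)ds.
\end{align*}
As $G$, $Q$, $R$ are symmetric under {\rm(\textbf{H})} and $\tilde{X}^{u},\tilde{X}^{v}$ depend linearly on $u,v$, the right-hand side is invariant under interchanging $u$ and $v$, hence $[[\cN_tu,v]]=[[\cN_tv,u]]=[[u,\cN_tv]]$; thus $\cN_t$ is a bounded self-adjoint operator. (Taking $v=u$ recovers $[[\cN_tu,u]]=J_0(t,0,\vf;u(\cd))$, consistent with \autoref{Lemma3.1}.)

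For assertion (ii), let $\xi\in\cX_t$ and let $(\bar{X},\bar{Y},\bar{Z},\bar{\G})$ solve \rf{3.5}. The forward equation is linear and homogeneous with bounded coefficients, so $\dbE\sup_{s\in[t,T]}|\bar{X}(s)|^2\les K\,\dbE|\xi|^2$; feeding the terminal value $G(\a(T))\bar{X}(T)$ ($G$ bounded) and the affine generator $\widetilde{F}_0$ with bounded coefficients into the BSDE estimate gives $\dbE\sup_{s\in[t,T]}|\bar{Y}(s)|^2+\dbE\int_t^T|\bar{Z}(s)|^2ds\les K\,\dbE|\xi|^2$. Since $[\cL_t\xi](s,\a(s))=F_0(s,\a(s),\bar{X}(s),\bar{Y}(s),\bar{Z}(s))$ is affine in $(\bar{X},\bar{Y},\bar{Z})$ with bounded coefficients, $[[\cL_t\xi,\cL_t\xi]]=\dbE\int_t^T|[\cL_t\xi](s,\a(s))|^2ds\les K\,\dbE|\xi|^2$, which is exactly \rf{3.8} and also shows $\cL_t:\cX_t\to\cU[t,T]$ is bounded; as before $K$ depends only on $T$ and the bounds in {\rm(\textbf{H})}, hence not on $(t,\xi,\vf)$.

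The main obstacle is the self-adjointness of $\cN_t$. One must run the It\^o formula with jumps carefully: check that the $\wt{N}$-driven part does not enter the covariation of $\langle\tilde{Y}^{u},\tilde{X}^{v}\rangle$ (this is where continuity of the forward process is used), verify that the resulting local martingales are true martingales (a standard localization together with the a priori estimates above), and then recognize the surviving bilinear form as symmetric through the symmetry of $G$, $Q$ and $R$. The two boundedness claims are routine consequences of the well-posedness estimates for linear forward-backward SDEs with Markovian regime switching and introduce no new difficulties.
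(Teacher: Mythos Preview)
Your proof is correct and follows essentially the same approach as the paper: boundedness of $\cN_t$ and $\cL_t$ via the standard a priori estimates for linear SDEs and BSDEs with bounded coefficients, and self-adjointness of $\cN_t$ via It\^o's formula applied to $\langle\tilde Y^{u},\tilde X^{v}\rangle$ to expose a bilinear form that is manifestly symmetric thanks to the symmetry of $G$, $Q$, $R$. The only cosmetic difference is that the paper applies It\^o's formula to both pairings $\langle\tilde Y_2,\tilde X_1\rangle$ and $\langle\tilde Y_1,\tilde X_2\rangle$ and compares, whereas you extract the symmetric form directly from a single application; the content is the same.
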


			\begin{proof}
				(i) Using the same argument as in Proposition 2.1 of Sun--Yong \cite{Sun-Yong-14}, it is easy to obtain the boundedness of the operator $\cN_t$.
				In order to show that $\cN_t$ is self-adjoint, it suffices to prove that for any $u_1(\cd),u_2(\cd)\in\cU[t,T]$, the following relation holds:
				\begin{align}\label{3.9}
					\dbE\int_t^T\big\langle [\cN_t u_1](s,\a(s)),u_2(s)\big\rangle ds
					=\dbE\int_t^T\big\langle u_1(s),[\cN_t u_2](s,\a(s))\big\rangle ds.
				\end{align}
				For this, we let
				$(\tilde{X}_j(\cd),\tilde{Y}_j(\cd),\tilde{Z}_j(\cd),\tilde{\Gamma}_j(\cd))$ with $j=1$, $2$ be the adapted solution of FBSDE \rf{3.4} with $u(\cd)$ is replaced by $u_j(\cd)$. On  one hand, applying It\^{o}'s formula to $\lan \tilde{Y}_2(s),\tilde{X}_1(s)\ran$ first and then taking the expectation, we have
				\begin{equation*}
					\begin{aligned}
						\dbE\big\langle G(\a(T))\tilde{X}_2(T),\tilde{X}_1(T)\big\rangle
						=&\ \dbE\int_t^T\big[\big\langle B(s,\a(s))^\top \tilde{Y}_2(s)+D(s,\a(s))^\top\tilde{Z}_2(s),u_1(s)\big\rangle\\
						& - \big\langle Q(s,\a(s))\tilde{X}_2(s),\tilde{X}_1(s)\big\rangle
						- \big\langle S(s,\a(s))\tilde{X}_1(s),u_2(s)\big\rangle\big]ds.
					\end{aligned}
				\end{equation*}
				On the other hand, applying It\^{o}'s formula to
				$\lan \tilde{Y}_1(s),\tilde{X}_2(s)\ran$ first and then taking the expectation lead to% deduce
				\begin{equation*}
					\begin{aligned}
						\dbE\big\langle G(\a(T))\tilde{X}_1(T),\tilde{X}_2(T)\big\rangle
						=&\ \dbE\int_t^T\big[\big\langle B(s,\a(s))^\top \tilde{Y}_1(s)+D(s,\a(s))^\top\tilde{Z}_1(s),u_2(s)\big\rangle\\
						& - \big\langle Q(s,\a(s))\tilde{X}_1(s),\tilde{X}_2(s)\big\rangle
						- \big\langle S(s,\a(s))\tilde{X}_2(s),u_1(s)\big\rangle\big]ds.
					\end{aligned}
				\end{equation*}
				Combining the above two equations and noting that $G(\cd)$ and $Q(\cd)$ are symmetric, we have
				\begin{equation*}
					\begin{aligned}
						\dbE\int_t^T\big\langle [\cN_t u_1](s,\a(s))-R(s,\a(s))u_1(s),u_2(s)\big\rangle ds=\dbE\int_t^T\big\langle u_1(s), [\cN_t u_2](s,\a(s))-R(s,\a(s))u_2(s)\big\rangle ds.
					\end{aligned}
				\end{equation*}%
				% \begin{equation*}
					% 	\begin{aligned}
						% 		%
						% 		\dbE\int_t^T\big\langle F_0(s,\a(s), \tilde{X}_1(s), \tilde{Y}_1(s), \tilde{Z}_1(s)), u_2(s)\big\rangle ds=\dbE\int_t^T\big\langle F_0(s,\a(s), \tilde{X}_2(s), \tilde{Y}_2(s), \tilde{Z}_2(s)),u_1(s)\big\rangle ds.
						% 		%
						% 	\end{aligned}
					% \end{equation*}
				%
				Thus, from the fact that $R(\cd)$ is also symmetric, we can easily obtain  \rf{3.9}.% too, so
				% %
				% $$\dbE\int_t^T\big\langle R(s,\a(s))u_1(s),u_2(s)\big\rangle ds
				% =\dbE\int_t^T\big\langle R(s,\a(s))u_2(s),u_1(s)\big\rangle ds.$$
				% %
				% Therefore, from the definition of $\cN_t$, we can easily obtain  \rf{3.9}.%have for $s\in[t,T]$,
				%%
				%$$[\cN_tu_j](s,\a(s))=B(s,\a(s))^\top \tilde{Y}_j(s)+D(s,\a(s))^\top \tilde{Z}_j(s)
				%+S(s,\a(s))\tilde{X}_j(s)+R(s,\a(s))u_j(s).$$
				%%
				%Finally, adding \rf{3.11} to \rf{3.10} implies \rf{3.9}.
				
				\ms
				
				(ii) It suffices to prove \rf{3.8}. Choose a positive constant $\rho_1$ such that
				\bel{3.12}|G(\a(T))|^2,|B(s,\a(s))|^2,|D(s,\a(s))|^2,|S(s,\a(s))|^2,|Q(s,\a(s))|^2\les \rho_1,
				\q \hb{a.e.}, s\in[0,T],\hb{ a.s.}.\ee
				Then using the vector inequality $|\b_1+\cdot\cdot\cdot+\b_n|\les n(|\b_1|^2+\cdot\cdot\cdot+\b_n|^2)$,
				we have
				\begin{equation*}
					\begin{aligned}
						\ [[\cL_t\xi,\cL_t\xi]]=\dbE\int_t^T\big|F_0(s,\a(s), \bar{X}(s), \bar{Y}(s), \bar{Z}(s))\big|^2ds		%
						\les \ 3\rho_1\dbE\int_t^T\[|\bar{Y}(s)|^2+|\bar{Z}(s)|^2|+\bar{X}(s)|^2\]ds.
					\end{aligned}
				\end{equation*}
				By the classical theory of SDEs and BSDEs (see also Proposition 2.1 of Sun--Yong \cite{Sun-Yong-14}), we have that for the decoupled FBSDE \rf{3.5}, there exists a positive constant $\rho_2$, independent of $(t,\xi,\vf)$, such that
				\begin{align}
					\dbE\int_t^T\big[|\bar{Y}(s)|^2+|\bar{Z}(s)|^2\big]ds&\les
					\rho_2\dbE\Big[|G(\a(T))\bar X(T)|^2+\int_t^T|Q(s,\a(s))\bar X(s)|^2ds\Big], \nn\\
					\dbE|\bar X(T)|^2+\dbE\int_t^T|\bar X(s)|^2ds&\les \rho_2\dbE|\xi|^2. \label{3.14}
				\end{align}
				Finally, combining \rf{3.12} and \rf{3.14}, we deduce that $[[\cL_t\xi,\cL_t\xi]]\les3\rho_1(\rho_1\rho_2^2+\rho_2)\dbE|\xi|^2$ for all $\xi\in\cX_t$.
			\end{proof}

			\begin{remark}\label{Remark3.3} \rm
				Denote by the quadruple $(\cX(\cd),\cY(\cd),\cZ(\cd),\breve{\Gamma}(\cd))$ the adapted solution of the FBSDE:
				$$\left\{\ba{ll}
				\ds d\cX(s)=A(s,\a(s))\cX(s)ds+C(s,\a(s))\cX(s)dW(s),\\
				\ns\ds d\cY(s)=-\widetilde{F}_0(s,\a(s), \cX(s), \cY(s), \cZ(s))ds+\cZ(s)dW(s)+\breve{\G}(s)\bullet d\wt{N}(s),\q s\in[0,T],\\
				\ns\ds \cX(0)=I_n,\q\a(0)=i_0,\q  \cY(T)=G(\a(T))\cX(T).
				\ea\right.$$
				It is easy to check that the process $\cX(\cd)$ is invertible (denote by $\cX^{-1}(\cd)$) with $\cX^{-1}(\cd)$ satisfying the following equation:
				$$\left\{\ba{ll}
				\ds d\cX^{-1}(s)=\cX^{-1}(s)\big[C(s,\a(s))^2-A(s,\a(s))\big]ds
				-\cX^{-1}(s)C(s,\a(s))dW(s),\q s\in[0,T],\\
				\ns\ds \cX^{-1}(0)=I_n,\q\a(0)=i_0.
				\ea\right.$$
				For every $\xi\in L^\i_{\sF_t}(\Omega;\dbR^n)$ and $\eta(s)\deq (\eta_{kl}(s))\in \cM_D(\mathbb{R}^{n\times n})$,  we set  $\eta(s)\circ\xi\deq(\eta_{kl}(s)\xi)$. %,\quad \int_{0}^{t}\eta(s)\bullet d\widetilde{N}(s)$first introduce the following notations:%also denote
				%\begin{align*}
				%	\eta(s)\circ\xi\deq(\eta_{kl}(s)\xi),\quad \int_{0}^{t}\eta(s)\bullet d\widetilde{N}(s)
				%	\deq\sum_{k,l=1}^D\int_{0}^{t}\eta_{kl}(s)d\widetilde{N}_{kl}(s),
				%	\quad
				%	\eta(s)\bullet d\widetilde{N}(s)
				%	\deq\sum_{k,l=1}^D\eta_{kl}(s)d\widetilde{N}_{kl}(s).
				%\end{align*}
				%Let $\mathrm{\G}(s)\circ\cX^{-1}(t)\xi\deq(\Gamma_{kl}(s)\cX^{-1}(t)\xi)$. Then
				Now, on the other hand,  the following processes
				\begin{align*}
					\cX(s)\cX^{-1}(t)\xi, \q~ \cY(s)\cX^{-1}(t)\xi, \q~
					\cZ(s)\cX^{-1}(t)\xi, \q~ \breve{\G}(s)\circ\cX^{-1}(t)\xi, \q~ s\in[t,T],
				\end{align*}
				are all square-integrable and satisfy FBSDE \rf{3.5}.  Therefore, by the uniqueness of the adapted solutions, one has that for $s\in[t,T]$,
				\begin{align*}
					\bar X(s)= \cX(s)\cX^{-1}(t)\xi, \q \bar Y(s)=\cY(s)\cX^{-1}(t)\xi, \q
					\bar Z(s)= \cZ(s)\cX^{-1}(t)\xi, \q \bar\G(s)=\breve{\G}(s)\circ\cX^{-1}(t)\xi.
				\end{align*}
				Hence, for every $\xi\in L^\i_{\sF_t}(\Omega;\dbR^n)$, the operator $\cL_t\xi$ could be represented in terms of the quadruple $(\cX(\cd),\cY(\cd),\cZ(\cd),\breve{\Gamma}(\cd))$ as follows,
				\bel{3.15}[\cL_t\xi](s,\a(s))
				%=\big[B(s,\a(s))^\top \cY(s)+D(s,\a(s))^\top \cZ(s)+S(s,\a(s))\cX(s)\big]\cX^{-1}(t)\xi,\q s\in[t,T].\ee
				=F_0(s,\a(s), \cX(s), \cY(s), \cZ(s))\cX^{-1}(t)\xi,\q s\in[t,T].\ee%
				This relation is useful in proving \autoref{21.8.22.4}.
			\end{remark}
			
			Now, we are ready to show the representation to the cost functional $J_0(t,\xi,\vf; u(\cd))$.
			Note that $J_0(t,\xi,\vf ; u(\cd))$ and ${J}(t,\xi,\vf ; u(\cd))$ have the relation $J_0(t, \xi,\vf ; u(\cd))=\mathbb{E} {J}(t,\xi,\vf ; u(\cd))$, and from \autoref{21.8.19.1}, the first component $M(\cd)$ of  BSDE \rf{2.3} is bounded.
			
			\begin{proposition}\label{Theorem3.4}\sl
				Let $\cN_t$ and $\cL_t$ be defined in \rf{3.6} and $M(\cd)$ be the first component of the solution of BSDE \rf{2.3}. Then, under (\textbf{H}), the cost functional $J(t,\xi,\vf;u(\cd))$ has the following representation:
				\bel{3.16-et}J(t,\xi,\vf;u(\cd))=[[\cN_tu, u]]_t+2[[\cL_t\xi,u]]_t+\langle M(t)\xi,\xi\rangle,\
				\hb{ for all }(t,\xi,\vf)\in\cD.\ee
				%
				%where $\cN_t$ and $\cL_t$ are defined in \rf{3.6} and $M(\cd)$ is the first component of the solution of BSDE %\rf{2.3}.
				%
				Furthermore, the cost functional $J_0(t,\xi,\vf;u(\cd))$ has the following representation:
				\bel{3.16}J_0(t,\xi,\vf;u(\cd))=[[\cN_tu, u]]+2[[\cL_t\xi,u]]+\dbE\langle M(t)\xi,\xi\rangle,\
				\hb{ for all }(t,\xi,\vf)\in\cD.\ee
				%
				%where $\cN_t$ and $\cL_t$ are defined in \rf{3.6} and $M(\cd)$ is the first component of the solution of BSDE \rf{2.3}.
				% where $\cN_t$ and $\cL_t$ are defined in \rf{3.6} and $M(\cd)$ is the first component of the solution of BSDE \rf{2.3}.
				% %
				% Under the condition (\textbf{H}), the cost functional $J_0(t,\xi,\vf;u(\cd))$ has the following representation:
				% %
				% \bel{3.16}J_0(t,\xi,\vf;u(\cd))=[[\cN_tu, u]]+2[[\cL_t\xi,u]]+\dbE\langle M(t)\xi,\xi\rangle,\
				% \hb{ for all }(t,\xi,\vf)\in\cD.\ee
				% %
				% where $\cN_t$ and $\cL_t$ are defined in \rf{3.6} and $M(\cd)$ is the first component of the solution of BSDE \rf{2.3}.
				%
			\end{proposition}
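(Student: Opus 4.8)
The plan is to build directly on \autoref{Lemma3.1} and on the decomposition $X=\tilde X+\bar X$, $Y=\tilde Y+\bar Y$, $Z=\tilde Z+\bar Z$, $\G=\tilde\G+\bar\G$ recorded just above, in which $(\tilde X,\tilde Y,\tilde Z,\tilde\G)$ solves the controlled FBSDE \eqref{3.4} with zero initial state and $(\bar X,\bar Y,\bar Z,\bar\G)$ solves the uncontrolled FBSDE \eqref{3.5} started from $\xi$. By \autoref{Lemma3.1}, $J(t,\xi,\vf;u(\cd))=\lan Y(t),\xi\ran+\dbE_t\int_t^T\lan F(s,\a(s),X(s),Y(s),Z(s),u(s)),u(s)\ran ds$, so it suffices to match the two terms on the right with the three terms in \eqref{3.16-et}.

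First I would use the affine structure of $F$: since $F(s,i,x,y,z,u)=F(s,i,\tilde x,\tilde y,\tilde z,u)+F_0(s,i,\bar x,\bar y,\bar z)$ whenever $x=\tilde x+\bar x$, $y=\tilde y+\bar y$, $z=\tilde z+\bar z$ (the control $u$ entering only through the $\tilde\cdot$ argument), the integrand above equals $[\cN_t u](s,\a(s))+[\cL_t\xi](s,\a(s))$ by the very definition \eqref{3.6} of the operators (recall $\tilde X,\tilde Y,\tilde Z$ are driven by $u(\cd)$ and $\bar X,\bar Y,\bar Z$ by $\xi$). Hence $\dbE_t\int_t^T\lan F(\cdots),u(s)\ran ds=[[\cN_t u,u]]_t+[[\cL_t\xi,u]]_t$.

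Next I would split $\lan Y(t),\xi\ran=\lan\tilde Y(t),\xi\ran+\lan\bar Y(t),\xi\ran$. For the $\bar Y$-piece, note that the zero control forces $X=\bar X$ and $Y=\bar Y$, so \autoref{Lemma3.1} gives $J(t,\xi,\vf;0)=\lan\bar Y(t),\xi\ran$; on the other hand, applying It\^o's formula with jumps to $s\mapsto\lan M(s)\bar X(s),\bar X(s)\ran$ on $[t,T]$ — whose drift collapses to $-\lan Q(s,\a(s))\bar X(s),\bar X(s)\ran$ by the design of \eqref{2.3}, the $dW$- and $d\widetilde N$-parts being true martingales since $M$ is bounded by \autoref{21.8.19.1} — yields $J(t,\xi,\vf;0)=\lan M(t)\xi,\xi\ran$, whence $\lan\bar Y(t),\xi\ran=\lan M(t)\xi,\xi\ran$. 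For the $\tilde Y$-piece I would apply It\^o's formula with jumps to $s\mapsto\lan\tilde Y(s),\bar X(s)\ran-\lan\bar Y(s),\tilde X(s)\ran$ on $[t,T]$ and take $\dbE_t$: the $d\widetilde N$-terms drop out of both cross brackets because $\tilde X,\bar X$ are continuous and $\widetilde N_{kl}$ is orthogonal to $W$; the $dW$-integrals vanish under $\dbE_t$ (square-integrability of all processes under (\textbf{H})); the terminal value is $0$ by symmetry of $G$; the value at $s=t$ is $\lan\tilde Y(t),\xi\ran$ since $\tilde X(t)=0$; and the $Q$-terms in the drift cancel by symmetry of $Q$. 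What survives is exactly $\lan\tilde Y(t),\xi\ran=\dbE_t\int_t^T\lan F_0(s,\a(s),\bar X,\bar Y,\bar Z),u(s)\ran ds=[[\cL_t\xi,u]]_t$.

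Collecting the three contributions gives $J(t,\xi,\vf;u(\cd))=[[\cN_t u,u]]_t+[[\cL_t\xi,u]]_t+[[\cL_t\xi,u]]_t+\lan M(t)\xi,\xi\ran$, which is \eqref{3.16-et}; taking unconditional expectations and using $[[u,v]]=\dbE[[u,v]]_t$ together with $J_0=\dbE J$ then yields \eqref{3.16}. The main obstacle will be the bookkeeping in the cross-term step: one has to expand both $\lan\tilde Y,\bar X\ran$ and $\lan\bar Y,\tilde X\ran$ via It\^o's formula with jumps, correctly separate the drift, the $W$-martingale part, and the $\widetilde N$-martingale part, and verify all the cancellations — this is precisely where the Markovian regime-switching jumps must be controlled, even though they make no net contribution to the covariations in question.
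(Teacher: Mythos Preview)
Your proposal is correct and follows essentially the same route as the paper: start from \autoref{Lemma3.1}, split $(X,Y,Z,\Gamma)$ into the $(\tilde\cdot)$- and $(\bar\cdot)$-parts, identify the integrand as $[\cN_t u]+[\cL_t\xi]$, obtain $\lan\tilde Y(t),\xi\ran=[[\cL_t\xi,u]]_t$ from the two cross It\^o computations on $\lan\tilde Y,\bar X\ran$ and $\lan\bar Y,\tilde X\ran$, and obtain $\lan\bar Y(t),\xi\ran=\lan M(t)\xi,\xi\ran$ via BSDE \eqref{2.3}. The only cosmetic difference is in this last step: the paper applies It\^o to the vector $M(s)\bar X(s)$ and uses uniqueness for the backward equation in \eqref{3.5} to conclude $\bar Y=M\bar X$ directly, while you compute $J(t,\xi,\vf;0)$ two ways (once as $\lan\bar Y(t),\xi\ran$ from \autoref{Lemma3.1} with $u=0$, once via It\^o on the scalar $\lan M\bar X,\bar X\ran$); both arguments are valid and equally short.
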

			\begin{proof}
				Fix any $(t,\xi,\vf)\in\cD$ and $u(\cd)\in\cU[t,T]$. Let $(\tilde{X}(\cd),\tilde{Y}(\cd),\tilde{Z}(\cd),\tilde \G(\cd))$, $(\bar{X}(\cd),\bar{Y}(\cd),\bar{Z}(\cd),\bar\G(\cd))$,
				and $(X(\cd),Y(\cd),Z(\cd),\G(\cd))$
				be the adapted solutions of FBSDE \rf{3.4}, FBSDE \rf{3.5}, and SDE \rf{state} and BSDE \rf{21.8.20.1}, respectively.
				Then
				$$X(s)=\tilde X(s)+\bar X(s),\q~ Y(s)=\tilde Y(s)+\bar Y(s),\q~
				Z(s)=\tilde Z(s)+\bar Z(s),\q~ \G(s)=\tilde \G(s)+\bar \G(s),\q s\in[t,T].$$
				By \autoref{Lemma3.1}, the definitions of $\cN_t$ and $\cL_t$, we have
				\bel{3.17}\ba{ll}
				\ds J(t,\xi,\vf;u(\cd))=\lan \tilde{Y}(t),\xi \ran+\lan \bar{Y}(t),\xi \ran
				+\dbE_t\int_t^T\big\lan [\cN_tu](s)+[\cL_t\xi](s),u(s)\big\ran ds.
				\ea\ee
				On one hand, applying It\^{o}'s formula to $\lan \tilde Y(s),\bar X(s)\ran$ on $[t,T]$ implies that
				\begin{align*}
					\dbE_t\lan G(\a(T))\tilde X(T),\bar X(T)\ran-\lan \tilde Y(t),\xi\ran=
					-\dbE_t\int_t^T\[\lan Q(s,\a(s))\tilde X(s),\bar X(s)\ran+\lan S(s,\a(s))\bar X(s),u(s)\ran\]ds.
				\end{align*}
				On the other hand, applying It\^{o}'s formula to $\lan \bar Y(s),\tilde X(s)\ran$ on $[t,T]$ gives that
				\begin{align*}
					\dbE_t\lan G(\a(T))\bar X(T),\tilde X(T)\ran=
					\dbE_t\int_t^T\[\lan B(s,\a(s))^\top\bar Y(s)+D(s,\a(s))^\top\bar Z(s),u(s)\ran
					-\lan Q(s,\a(s))\bar X(s),\tilde X(s)\ran\]ds.
				\end{align*}
				So we have
				\begin{align*}
					\lan\tilde{Y}(t),\xi\ran
					=\dbE_t\int_t^T\[\lan F_0(s,\a(s), \bar{X}(s),  \bar{Y}(s), \bar{Z}(s)), u(s)\ran\]ds
					=\dbE_t\int_t^T\lan[\cL_t\xi](s),u(s)\ran ds. %\label{3.18}
				\end{align*}
				Moreover, applying It\^{o}'s formula to $M(s)\bar X(s)$, we have
				%
				%\begin{equation}\label{21.9.15.1}
				\begin{align}
					d[M(s)\bar{X}(s)]&=-\Big[A(s,\a(s))^\top M(s) \bar{X}(s)
					+C(s,\a(s))^\top\big(M(s)C(s,\a(s))+\Phi(s)\big) \bar{X}(s)+Q(s,\a(s))\bar{X}(s)\Big]ds  \nonumber\\
					\label{21.9.15.1}  &\q\ +\big(M(s)C(s,\a(s))+\Phi(s)\big)\bar{X}(s)dW(s)
					+(\eta(s)\circ\bar X(s))\bullet d\widetilde{N}(s).	
				\end{align}
				%\end{equation}
				Since $M(T)\bar X(T)=\bar Y(T)$, we see that the triple $(M\bar X,(MC+\Phi)\bar X,\eta\circ \bar X)$ satisfies the same BSDE as $(\bar Y(\cd),\bar Z(\cd),\bar \G(\cd))$. Thus, by
				comparing equations \rf{3.5} and \rf{21.9.15.1}, we have
				\begin{align*}
					\bar Y(s)=M(s)\bar X(s),\q\bar Z(s)=\big(M(s)C(s,\a(s))+\Phi(s)\big) \bar{X}(s),\q
					\bar\G(s)=\eta(s)\circ\bar X(s),\q s\in[t,T].
				\end{align*}
				It follows that $\lan\bar Y(t),\xi\ran=\lan M(t)\xi,\xi\ran$.
				Substituting this relation  into \rf{3.17} implies result \rf{3.16-et}. By taking expectation to \rf{3.16-et}, we can easily obtain \rf{3.16}.
			\end{proof}

			\begin{corollary}\label{Corollary3.5}\sl
				Under the condition (\textbf{H}), the following two assertions hold:
				\begin{itemize}
					\item [{\rm (i)}] A control $u^*(\cd)\in\cU[t,T]$ is optimal for Problem (M-SLQ)$_0$ w.r.t. $(t,\xi,\vf)\in\cD$ if and only if
					\begin{align}\label{3.19}
						\cN_t\ges 0,\q \hb{and}\q\cN_tu^*+\cL_t\xi=0.
					\end{align}
					\item [{\rm (ii)}] If $\cN_t$ is invertible and satisfies the positivity condition $\cN_t\ges0$, then Problem (M-SLQ)$_0$ is uniquely solvable at $t$, and the unique open-loop optimal control $u^*(\cd)$ w.r.t. $(t,\xi,\vf)\in\cD$ is given by
					\begin{align*}
						u^*(s)=-[\cN^{-1}_t\cL_t\xi](s),\q   s\in[t,T].
					\end{align*}
				\end{itemize}
			\end{corollary}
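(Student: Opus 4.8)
The plan is to treat Problem (M-SLQ)$_0$ as the minimization of a quadratic functional on the Hilbert space $\cU[t,T]=L^2_\dbF(t,T;\dbR^m)$ endowed with the inner product $[[\cd,\cd]]$, using the representation \rf{3.16} from \autoref{Theorem3.4}. Fix $(t,\xi,\vf)\in\cD$. For arbitrary $u^*(\cd),v(\cd)\in\cU[t,T]$ and $\l\in\dbR$, I would substitute $u(\cd)=u^*(\cd)+\l v(\cd)$ into \rf{3.16}; since $\cN_t$ is bounded and self-adjoint by assertion (i) of \autoref{Proposition3.2}, the cross terms combine to give the identity
$$J_0(t,\xi,\vf;u^*+\l v)=J_0(t,\xi,\vf;u^*)+2\l[[\cN_tu^*+\cL_t\xi,v]]+\l^2[[\cN_tv,v]].$$
This single identity drives both parts of the corollary.

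For part (i), suppose $u^*(\cd)$ is optimal. Then for each fixed $v(\cd)$ the scalar function $\l\mapsto 2\l[[\cN_tu^*+\cL_t\xi,v]]+\l^2[[\cN_tv,v]]$ is nonnegative on all of $\dbR$ and vanishes at $\l=0$, so its linear coefficient must be zero; since $v(\cd)$ is arbitrary, this yields the stationarity relation $\cN_tu^*+\cL_t\xi=0$, and the remaining term $\l^2[[\cN_tv,v]]\ges0$ then forces $[[\cN_tv,v]]\ges0$, i.e.\ $\cN_t\ges0$. Conversely, if $\cN_t\ges0$ and $\cN_tu^*+\cL_t\xi=0$, then writing any competitor as $u=u^*+v$ the identity collapses to $J_0(t,\xi,\vf;u)=J_0(t,\xi,\vf;u^*)+[[\cN_tv,v]]\ges J_0(t,\xi,\vf;u^*)$, so $u^*$ is optimal.

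For part (ii), if $\cN_t$ is invertible on $\cU[t,T]$, the bounded inverse theorem guarantees that $\cN_t^{-1}$ is bounded, so $u^*(\cd)\deq-[\cN_t^{-1}\cL_t\xi](\cd)$ is a genuine element of $\cU[t,T]$ (recall $\cL_t\xi\in\cU[t,T]$ by assertion (ii) of \autoref{Proposition3.2}). It satisfies $\cN_tu^*+\cL_t\xi=0$, and combined with $\cN_t\ges0$ it is optimal by part (i). Uniqueness is immediate: any other optimal $\bar u(\cd)$ also satisfies $\cN_t\bar u+\cL_t\xi=0$ by (i), whence $\cN_t(\bar u-u^*)=0$, and invertibility yields $\bar u=u^*$.

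The argument is essentially routine given \autoref{Theorem3.4} and \autoref{Proposition3.2}; the only points deserving care are the two-sided perturbation in $\l$, which must be organized so as to extract the stationarity equation $\cN_tu^*+\cL_t\xi=0$ and the positivity $\cN_t\ges0$ simultaneously rather than one at a time, and the appeal to the bounded inverse theorem in part (ii) to confirm that $-\cN_t^{-1}\cL_t\xi$ really lies in the control space.
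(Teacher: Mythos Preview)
Your proof is correct and follows essentially the same approach as the paper: both expand $J_0(t,\xi,\vf;u^*+\l v)$ via the representation \rf{3.16}, use the self-adjointness of $\cN_t$ to obtain the quadratic-in-$\l$ identity, and read off the stationarity and positivity conditions from it, with part (ii) as an immediate consequence. Your version is slightly more explicit about invoking the bounded inverse theorem and about why $-\cN_t^{-1}\cL_t\xi$ lies in $\cU[t,T]$, but the underlying argument is the same.
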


			\begin{proof}
				By \autoref{def-open}, we see that $u^*(\cd)\in\cU[t,T]$ is optimal for Problem (M-SLQ)$_0$ w.r.t. the initial triple $(t,\xi,\vf)$ if and only if
				\bel{3.20}J_0(t,\xi,\vf;u^*(\cd)+\l v(\cd))-J_0(t,\xi,\vf;u^*(\cd))\ges0,\q \forall v(\cd)\in\cU[t,T],\  \l\in\dbR.\ee
				%
				%From \autoref{Theorem3.4},
				According to the representation  \rf{3.16}, one has
				\begin{align*}
					J_0(t,\xi,\vf;u^*(\cd)+\l v(\cd))-J_0(t,\xi,\vf;u^*(\cd))
					%%	&~-\([[\cN_tu^*,u^*]]+2[[\cL_t\xi,u^*]]+\dbE\langle M(t)\xi,\xi\rangle\)\\
					%
					=&~\l^2[[\cN_tv,v]]+2\l[[\cN_tu^*,v]]+2\l[[\cL_t\xi,v]].
				\end{align*}
				Hence, the equation \rf{3.20} is equivalent to
				\begin{align*}
					\l^2[[\cN_tv,v]]+2\l[[\cN_tu^*+\cL_t\xi,v]]\ges0,\q \forall v(\cd)\in\cU[t,T],\ \forall\l\in\dbR.
				\end{align*}
				%
				%This meeds that for arbitrary but fix $v\in\cU[t,T]$, the following quadratic function
				%%
				%\begin{align*}
				%h(\l)\deq\l^2[[\cN_tv,v]]+2\l[[\cN_tu^*+\cL_t\xi,v]]
				%\end{align*}
				%%
				%is nonnegative.
				%
				Therefore, we must have
				\begin{align*}
					[[\cN_tv,v]]\ges0 \q \hb{and}\q [[\cN_tu^*+\cL_t\xi,v]]=0, \q \forall v\in\cU[t,T],
				\end{align*}
				which implies that \rf{3.19} holds. % since $v(\cd)$ is arbitrary.
				The converse assertion is obvious. The assertion (ii) is a direct consequence of the assertion (i). This completes the proof.
			\end{proof}

			\begin{remark}\label{remark5.6}\rm
				It is noteworthy that in the assertion (ii) of \autoref{Corollary3.5}, the assumption that
				$\cN_t\ges0$  and $ \cN_t$  is invertible
				are equivalent to that $\cN_t$ is uniformly positive, i.e., there exists a positive constant $\e$ such that
				\begin{align}\label{3.21}
					[[\cN_tu,u]]\ges\e[[u,u]],\q \forall u(\cd)\in\cU[t,T].
				\end{align}
				From the assertion (i) of \autoref{Corollary3.5}, we see that the condition $\cN_t\ges0$
				(or equivalently, $[[\cN_tu,u]]\ges0$ for every $u(\cd)\in\cU[t,T]$) is necessary
				for the existence of an open-loop optimal control of Problem (M-SLQ)$_0$, and from the assertion (ii) of \autoref{Corollary3.5},
				we see that the condition \rf{3.21}, slightly stronger than $\cN_t\ges0$,
				is sufficient for the existence of an open-loop optimal control of Problem (M-SLQ)$_0$.
				Moreover, by \rf{3.16}, we have
				$[[\cN_tu,u]]=J_0(t,0,\vf;u(\cd)).$
			\end{remark}

			Based on the above discussions, we are in a position to prove the auxiliary results of \autoref{Sec3.2}.

			\begin{proof}[Proof of \autoref{Theorem4.1}]
				By the assertion (i) of \autoref{Corollary3.5}, we see that $u^*(\cd)\in\cU[t,T]$ is an open-loop optimal control of Problem (M-SLQ)$_0$ with respect to $(t,\xi,\vf)$ if and only if \rf{3.19} holds. According to \rf{3.16}, we see that $\cN_t\ges0$ is equivalent to
				$$J_0(t,0,\vf;u(\cd))=[[\cN_tu,u]]\geq 0,\q \forall u(\cd)\in\cU[t,T],$$
				which is exactly condition \rf{21.9.15.2}. From the definitions of $\cN_t$ and $\cL_t$, it is easy to verify that \begin{align*}
					[\cN_tu^*+\cL_t\xi](s,\a(s))=	F(s,\a(s), X^*(s),  Y^*(s), Z^*(s), u^*(s))%B(s,\a(s))^\top Y(s)+D(s,\a(s))^\top Z(s)+S(s,\a(s))X(s)+R(s,\a(s))u^*(s)
					, \q s\in[t,T],
				\end{align*}
				where $(X^*(\cd), Y^*(\cd),Z^*(\cd),\G^*(\cd))$ is the adapted solution of FBSDE \rf{4.1}. Therefore, $\cN_tu^*+\cL_t\xi=0$ is equivalent to stationarity condition \rf{4.2}.
			\end{proof}

			\begin{proof}[Proof of \autoref{Theorem4.2}]
				The sufficiency is obvious. Next we prove the necessity.
				Suppose that $u^*(\cd)\in\cU[t,T]$  is optimal for Problem (M-SLQ)$_0$, and let $(X^*(\cd),Y^*(\cd),Z^*(\cd),\G^*(\cd))$ be the adapted solution of FBSDE \rf{4.1}. In order to prove that $u^*(\cd)$ is also optimal for Problem ${(\hb{M-SLQ})}$, it suffices to show that for any set $\Lambda\in\sF_t$,
				\begin{align}\label{4.3}
					\dbE[L(t,\xi,\vf;u^*(\cd))\mathbf{1}_\L]\les\dbE[L(t,\xi,\vf;u(\cd))\mathbf{1}_\L],\q \forall u(\cd)\in\cU[t,T],
				\end{align}
				where for any $(t,\xi,\vf)\in\cD$ and $u(\cd)\in\cU[t,T]$,
				\bel{cost-2}\ba{ll}
				\ds L(t,\xi,\vf;u(\cd))\deq\big\lan G(\a(T))X(T),X(T)\big\ran
				+\int_t^T\llan\begin{pmatrix}Q(s,\a(s))&S(s,\a(s))^\top\\S(s,\a(s))&R(s,\a(s))\end{pmatrix}
				\begin{pmatrix}X(s)\\ u(s)\end{pmatrix},
				\begin{pmatrix}X(s)\\ u(s)\end{pmatrix}\rran ds.
				\ea\ee
				Note that in \rf{cost-2}, $X(\cd)$ is the solution of the state equation \rf{state} with respect to $u(\cd)$.
				For this, we would like to fix an arbitrary control $u(\cd)\in\cU[t,T]$ and an arbitrary set $\L\in\sF_t$. Define
				\begin{align*}
					\widehat\xi(\o)=\xi(\o)\mathbf{1}_\L(\o),\q
					\widehat\vf(\o)=\vf(\o)\mathbf{1}_\vf(\o),\q
					\widehat u(s)=u(s)\mathbf{1}_\L(\o),\q
					\widehat u^*(s)=u^*(s)\mathbf{1}_\L(\o),
				\end{align*}
				and consider the following FBSDE:
				\bel{4.4}\left\{\ba{ll}
				d\widehat{X}^*(s)=[A(s,\a(s))\widehat{X}^*(s)+B(s,\a(s))\widehat u^*(s)]ds
				+[C(s,\a(s))\widehat{X}^*(s)+D(s,\a(s))\widehat u^*(s)]dW(s),\\
				\ns\ns\ds d\widehat{Y}^*(s)=-\widetilde{F}(s,\a(s), \widehat{X}^*(s), \widehat{Y}^*(s), \widehat{Z}^*(s), \widehat u^*(s))ds+\widehat{Z}^*(s)dW(s)+\widehat{\G}^*(s)\bullet d\widetilde{N}(s),\q s\in[t,T],\\
				\ns\ns\ds \widehat{X}^*(t)=\widehat \xi,\q\a(t)=\widehat\vf,\q  \widehat{Y}^*(T)=G(\a(T))\widehat{X}^*(T).
				\ea\right.\ee
				It is straightforward to verify that the solution $(\widehat{X}^*(\cd),\widehat{Y}^*(\cd),\widehat{Z}^*(\cd),\widehat{\G}^*(\cd))$
				of FBSDE \rf{4.4} is given by
				\begin{align*}
					\widehat{X}^*(s)=X^*(s)\mathbf{1}_\L(\o), \q
					\widehat{Y}^*(s)=Y^*(s)\mathbf{1}_\L(\o),\q
					\widehat{Z}^*(s)=Z^*(s)\mathbf{1}_\L(\o), \q
					\widehat{\G}^*(s)=\G^*(s)\mathbf{1}_\L(\o).%\qq k=1,...,D.
				\end{align*}
				%
				% \begin{align*}
					% \ds &\widehat{X}^*(s,\o)=X^*(s,\o)\mathbf{1}_\L(\o), \q
					% \widehat{Y}^*(s,\o)=Y^*(s,\o)\mathbf{1}_\L(\o),\\
					% \ns\ds &\widehat{Z}^*(s,\o)=Z^*(s,\o)\mathbf{1}_\L(\o), \q
					% \widehat{\G}^*(s,\o)=\G^*(s,\o)\mathbf{1}_\L(\o).%\qq k=1,...,D.
					% \end{align*}
				%
				Now, from \autoref{Theorem4.1}, the quadruple $(X^*(\cd),Y^*(\cd),Z^*(\cd),\G^*(\cd))$ satisfies condition \rf{4.2}. Hence,  multiplying on both sides of \rf{4.2} by $\mathbf{1}_\L$, one has
				\begin{align*}
					F(s,\a(s), \widehat{X}^*(s), \widehat{Y}^*(s), \widehat{Z}^*(s), \widehat u^*(s))%	B(s,\a(s))^\top \widehat{Y}(s)+D(s,\a(s))^\top \widehat{Z}(s)+S(s,\a(s))\widehat{X}(s)+R(s,\a(s))\hat u^*(s)
					=0,\q \hb{a.e. $s\in[t,T]$, a.s.}.
				\end{align*}
				Again, applying  \autoref{Theorem4.1} to the initial triple $(t,\widehat\xi,\widehat\vf)$, we conclude that $\widehat u^*(\cd)$ is an open-loop optimal control of Problem (M-SLQ)$_0$ w.r.t. the initial triple $(t,\widehat\xi,\widehat\vf)$. Therefore,
				$$\dbE[L(t,\widehat \xi,\widehat\vf;\widehat u^*(\cd))]
				\les\dbE[L(t,\widehat\xi,\widehat\vf;\widehat u(\cd))].$$
				Note that the state process $X^*(\cd)=X^*(\cd;t,\xi,\vf,u^*(\cd))$  and
				the state process $\widehat{X}^*(\cd)=X^*(\cd;t,\widehat\xi,\widehat\vf,\widehat u^*(\cd))$ are related by
				$X^*(\cd;t,\xi,\vf,u^*(\cd))\mathbf{1}_\L=X^*(\cd;t,\widehat\xi,\widehat\vf,\widehat u^*(\cd)).$
				It follows that $L(t,\xi,\vf;u^*(\cd))\mathbf{1}_\L=L(t,\widehat\xi,\widehat\vf;\widehat u^*(\cd))$. Similarly, we have $L(t,\xi,\vf;u(\cd))\mathbf{1}_\L=L(t,\widehat\xi,\widehat\vf;\widehat u(\cd))$. Thus,
				$$\dbE[L(t,\xi,\vf;u^*(\cd))\mathbf{1}_\L]=\dbE[L(t,\widehat\xi,\widehat\vf;\widehat u^*(\cd))]
				\les \dbE[L(t,\widehat\xi,\widehat\vf;\widehat u(\cd))]=\dbE[L(t,\xi,\vf;u(\cd))\mathbf{1}_\L],$$
				and from which we see that \rf{4.3} holds. This completes the proof.
			\end{proof}

			\begin{proof}[Proof of \autoref{21.9.3.3}] \rm
				Note that $(X^*(\cd),u^*(\cd))$ is an open-loop optimal pair with respect to $(t,\xi,\vf)$, so \autoref{Theorem4.1} implies
				\begin{align*}
					F(s,\a(s), X^*(s),  Y^*(s), Z^*(s), u^*(s))=
					0,\q \hb{a.e. $s\in[t,T]$, a.s..}
				\end{align*}
				Then the result follows immediately from \autoref{Lemma3.1}.
			\end{proof}

			\begin{proof}[Proof of \autoref{Corollary4.5}]\rm
				Let $\t\in\cT[t,T]$. According to \autoref{Theorem4.1} and \autoref{Theorem4.2}, it suffices
				to show that
				$J_0(\t,0,\a(\t);u(\cd))\ges0$ for all $u(\cd)\in\cU[t,T]$, and
				the adapted solution $(X^*_\t(\cd),Y^*_\t(\cd),Z^*_\t(\cd),\G^*_\t(\cd))$ to FBSDE \rf{4.1} with $u^*(\cd)$ replaced by $u^*(\cd)|_{[\t,T]}$
				satisfies the following stationarity condition:
				$$B(s,\a(s))^\top Y^*_\t(s)+D(s,\a(s))^\top Z^*_\t(s)
				+S(s,\a(s))X^*_\t(s)+R(s,\a(s))u^*(s)|_{[\t,T]}=0,\ \ \hb{a.e. $s\in[t,T]$, a.s..}$$
				For this, we let $u(\cd)\in\cU[\t,T]$ be an arbitrary and define the zero-extension of $u(\cd)$ over the time interval $[t, T ]$ as follows:
				\begin{equation}\label{22.6.16.1}
					u_e(s)=\left\{\ba{ll}
					\ds 0,\qq\ s\in[t,\t),\\
					\ns\ds u(s),\q s\in[\t,T].
					\ea\right.
				\end{equation}
				Clearly, $u_e(\cd)$ belongs to the space $\cU[t,T]$. Denote by $X^\t$ and $X^t$ the solutions to the following SDEs, respectively,

				\begin{equation*}
					\left\{\begin{aligned}
						\ds dX^\t(s)=& [A(s,\a(s))X^\t(s)+B(s,\a(s))u(s)]ds\\
						\ns\ds & +[C(s,\a(s))X^\t(s)+D(s,\a(s))u(s)]dW(s),\q s\in[\t,T],\\
						\ns\ds X^\t(\t)=& 0, \q \a(t)=\vf,
					\end{aligned}\right.
				\end{equation*}
				and
				\begin{equation*}
					\left\{\begin{aligned}
						\ds dX^t(s)=& [A(s,\a(s))X^t(s)+B(s,\a(s))u_e(s)]ds\\
						\ns\ds & +[C(s,\a(s))X^t(s)+D(s,\a(s))u_e(s)]dW(s),\q s\in[t,T],\\
						\ns\ds X^t(t)=& 0, \q\a(t)=\vf.
					\end{aligned}\right.
				\end{equation*}
				Since the initial states of the above two SDEs are 0, and note that $v_e = 0$ on $[t,\t)$, we have
				$$X^t(s)=\left\{\ba{ll}
				\ds 0,\qq\q s\in[t,\t],\\
				\ns\ds X^\t(s),\q s\in[\t,T],
				\ea\right.$$
				from which one has that
				\begin{align}
					\ds &J_0(\t,0,\a(\t);u(\cd)) \nn\\
					\ns\ds &=\dbE\bigg[\big\lan G(\a(T))X^\t(T),X^\t(T)\big\ran
					+\int_\t^T\llan\begin{pmatrix}Q(s,\a(s))&S(s,\a(s))^\top\\S(s,\a(s))&R(s,\a(s))\end{pmatrix}
					\begin{pmatrix}X^\t(s)\\ u(s)\end{pmatrix},
					\begin{pmatrix}X^\t(s)\\u(s)\end{pmatrix}\rran ds\bigg] \nonumber\\
					\ns\ds &=\dbE\bigg[\big\lan G(\a(T))X^t(T),X^t(T)\big\ran
					+\int_t^T\llan\begin{pmatrix}Q(s,\a(s))&S(s,\a(s))^\top\\S(s,\a(s))&R(s,\a(s))\end{pmatrix}
					\begin{pmatrix}X^t(s)\\ u_e(s)\end{pmatrix},
					\begin{pmatrix}X^t(s)\\u_e(s)\end{pmatrix}\rran ds\bigg] \nonumber\\
					\ns\ds &=J_0(t,0,\vf;u_e(\cd)).  \label{4.5}
				\end{align}
				Due to that under the conditions, Problem (M-SLQ) is solvable at $(t,\xi,\vf)$, and from the assertion (i) of \autoref{Theorem4.1} and relation \rf{4.5}, we obtain that
				$$J_0(\t,0,\a(\t);u(\cd))=J_0(t,0,\vf;u_e(\cd))\ges0,\q\forall u(\cd)\in\cU[\t,T].$$
				
				Next, note that the quadruple $(X^*(\cd),Y^*(\cd),Z^*(\cd),\G^*(\cd))$ is the adapted solution of FBSDE \rf{4.1}.
				Since $u^*(\cd)\in\cU[t,T]$ is an open-loop optimal control w.r.t. $(t,\xi,\vf)$,  by assertion (ii) of \autoref{Theorem4.1},
				$$B(s,\a(s))^\top Y^*(s)+D(s,\a(s))^\top Z^*(s)
				+S(s,\a(s))X^*(s)+R(s,\a(s))u^*(s)=0,\q \hb{a.e. $s\in[t,T]$, a.s..}$$
				Then the results hold from the fact that
				$$(X^*_\t(s),Y^*_\t(s),Z^*_\t(s),\G^*_\t(s))=(X^*(s),Y^*(s),Z^*(s),\G^*(s)),\q s\in[\t,T].$$
				This completes the proof.
			\end{proof}

			\begin{proof}[Proof of \autoref{21.8.21.4}]
				For arbitrary $u(\cd) \in \mathcal{U}[\tau, T]$, we define $ u_{e}(\cd)$ as in \rf{22.6.16.1}.
				%
				%$$ u_{e}(s)= \begin{cases}0, & s \in[0, \tau), \\ u(s), & s \in[\tau, T].\end{cases}$$
				%
				Using the same argument as in the proof of \autoref{Corollary4.5} with $t=0$, one has
				$$
				J_0(\tau, 0,\a(\t) ; u(\cd))=J_0\left(0,0,i_0; u_{e}(\cd)\right) \geqslant \e \mathbb{E} \int_{0}^{T}\left|u_{e}(s)\right|^{2} d s=\e \mathbb{E} \int_{\tau}^{T}|u(s)|^{2} d s.
				$$
				Hence, by setting $\t\equiv t$ and from \autoref{remark5.6}, the assertion (ii) of \autoref{Corollary3.5} and \autoref{Theorem4.2}, we obtain that Problem (M-SLQ) is uniquely solvable.
			\end{proof}

			\begin{proof}[Proof of \autoref{21.8.21.2}]
				By \autoref{Theorem4.1}, it is easy to see that the first assertion holds.
				Now, we consider the second assertion. Note that $\xi\in L_{\sF_{t}}^{\infty}\left(\Omega ; \mathbb{R}^{n}\right)$ is bounded, so the pair
				$(\mathbb X(s), \mathbb U(s)) \triangleq(\boldsymbol{X}(s) \xi, \boldsymbol{U}(s) \xi)$ %t \leqslant s \leqslant T$,
				is square-integrable and satisfies the following state equation:
				\begin{equation}\label{22.7.7.1}
					\left\{\begin{aligned}
						\ds d \mathbb X(s) &=\big[A(s,\a(s)) \mathbb X(s)+B(s,\a(s)) \mathbb U(s)\big] d s\\
						\ns\ds &\q +\big[C(s,\a(s)) \mathbb X(s)+D(s,\a(s)) \mathbb U(s)\big] d W(s), \q s \in[t, T],\\
						\ns\ds \mathbb X(t) &=\xi,\q \a(t)=\vf.
					\end{aligned}\right.
				\end{equation}
				Similarly, we see that the triple
				$\left(\mathbb Y(s), \mathbb  Z(s),\breve{\mathbf{\G}}(s)\right) \triangleq(\boldsymbol{Y}(s) \xi, \BZ(s) \xi,\boldsymbol{\G}(s)\circ\xi )$
				is the adapted solution to the following adjoint BSDE associated with $\left(\mathbb X(\cd), \mathbb U(\cd)\right)$:
				$$\left\{\ba{ll}
				\ds d\mathbb Y(s)=-\widetilde{F}(s,\a(s), \mathbb X(s), \mathbb Y(s),  \mathbb Z(s), \mathbb U(s))ds+\mathbb Z(s)dW(s)+\breve{\mathbf{\G}}(s)\bullet d\widetilde{N}(s), \q s\in[t,T],\\%-\big[A(s,\a(s))^\top Y^*(s)+C(s,\a(s))^\top Z^*(s)+Q(s,\a(s))X^*(s)+S(s,\a(s))^\top u^*(s)\big]ds\\
				%
				%\qq\qq\ +Z^*(s)dW(s)+\G^*(s)\bullet d\widetilde{N}(s),\qq s\in[t,T],\\
				%
				\ns\ns\ds \mathbb Y(T)=G(\a(T))\mathbb X(T),\q \a(t)=\vf.
				\ea\right.$$
				Furthermore, \rf{21.8.20.3} deduces that
				\begin{align*}
					& F(s,\a(s), \mathbb X(s),  \mathbb Y(s),\mathbb  Z(s),\mathbb U(s))
					=F(s,\a(s), \BX(s), \BY(s), \BZ(s), \BU(s)) \xi
					=0,\q \hb{a.e. $s\in[t,T]$, a.s.}.
				\end{align*}
				%		\begin{align*}
					%			& B(s,\a(s))^{\top} Y^{*}(s)+D(s,\a(s))^{\top} Z^{*}(s)+S(s,\a(s)) X^{*}(s)+R(s,\a(s)) u^{*}(s)\\
					%			%
					%			&=\big[B(s,\a(s))^{\top} \BY(s)+D(s,\a(s))^{\top} \BZ(s)+S(s,\a(s)) \BX(s)
					%			+R(s,\a(s)) \BU(s)\big] \xi
					%			%
					%			=0,\q  \hb{a.e. $s\in[t,T]$, a.s.}
					%		\end{align*}
				%		%
				Thus, combining \autoref{Theorem4.1}, the pair $\left(\mathbb X(\cd), \mathbb U(\cd)\right)$ is optimal w.r.t. the initial triple $(t,\xi,\vf)$.
			\end{proof}

			\begin{proof}[Proof of \autoref{21.8.21.5}]
				Let  $\left\{\left(X_{j}(s), u_{j}(s)\right) \right\}_{s\in[t,T]}$ and $\{(\boldsymbol{X}(s), \boldsymbol{U}(s)) \}_{s\in[t,T]}$ be as in  \autoref{21.8.21.2},
				then  the pair $\{(\boldsymbol{X}(s)\xi, \boldsymbol{U}(s)\xi) \}_{s\in[t,T]}$  is optimal w.r.t. the initial triple $(t,\xi,\vf)$. % for every $\xi \in L_{\sF_{t}}^{\infty}\left(\Omega ; \mathbb{R}^{n}\right)$.
				For simplicity presentation,  we denote
				$$
				\mathbf{M}(T,\a(T))\deq \boldsymbol{X}(T)^{\top} G(\a(T)) \boldsymbol{X}(T), \quad
				\boldsymbol{N}(s,\a(s))\deq \left(\begin{array}{c}
					\boldsymbol{X}(s) \\
					\boldsymbol{U}(s)
				\end{array}\right)^{\top}
				\left(\begin{array}{cc}
					Q(s,\a(s)) & S(s,\a(s))^{\top} \\
					S(s,\a(s)) & R(s,\a(s))
				\end{array}\right)
				\left(\begin{array}{c}
					\boldsymbol{X}(s) \\
					\boldsymbol{U}(s)
				\end{array}\right),
				$$
				and then we could rewrite
				\bel{22.7.11.0}%\begin{aligned}
				L(t,\xi,\vf ; \boldsymbol{U} \xi)
				%=
				%
				%&~ \big\langle G(\a(T)) \boldsymbol{X}(T) \xi, \boldsymbol{X}(T) \xi\big\rangle
				%%
				% +\int_{t}^{T}\left\langle\left(\begin{array}{cc}
					%Q(s,\a(s)) & S(s,\a(s))^{\top} \\
					%S(s,\a(s)) & R(s,\a(s))
					%\end{array}\right)
					%%
					%\left(\begin{array}{l}
						%\boldsymbol{X}(s) \xi \\
						%\boldsymbol{U}(s) \xi
						%\end{array}\right),
						%%
						%\left(\begin{array}{l}
							%\boldsymbol{X}(s) \xi \\
							%\boldsymbol{U}(s) \xi
							%\end{array}\right)\right\rangle d s \\
							%%
							=\langle \mathbf{M}(T,\a(T))\xi,\xi\rangle+\int_{t}^{T}\langle\boldsymbol{N}(s,\a(s))\xi,\xi\rangle ds,
							%\end{aligned}
							\ee
							where $L(\cd)$ is defined in \rf{cost-2}.
							Now, on one hand, since the pair $\{(\boldsymbol{X}(s)\xi, \boldsymbol{U}(s)\xi) \}_{s\in[t,T]}$  is optimal w.r.t. the initial triple $(t,\xi,\vf)$, from the definition \rf{optim-2} we have
							\begin{equation}\label{22.7.11.1}
								{V}(t,\xi,\vf)= \mathbb{E}[L(t,\xi,\vf ; \boldsymbol{U} \xi) \mid \sF_{t}].
							\end{equation}
							On the other hand, note that the state equation \rf{22.7.7.1} works over the time horizon $[t,T]$ with the initial value  $\xi$ is $\sF_t$-measurable and $\a(t)=\vf$ is $\sF_t^\a$-measurable, hence there is a process  $P:[0,T]\ts\cS\ts\Omega\rightarrow\dbS^n$, which is $\dbF$-adapted, such that
							\begin{equation}\label{22.7.11.2}
								P(t,\a(t))= \mathbb{E}\Big[\mathbf{M}(T,\a(T))+\int_{t}^{T} \boldsymbol{N}(s,\a(s)) d s \Big| \sF_{t}\Big].
							\end{equation}
							In fact, for the special case of $C=G=1$ and $B=D=Q=S=R=0$,  one can calculate the solution of SDE \rf{22.7.7.1} to obtain that
							\begin{equation*}
								\mathbb{X}(s)=\xi\exp\left\{ \int_t^s\Big[A(r,\a(r))-\frac{1}{2}\Big]dr+W(s)-W(t)\right\},\q s\in[t,T].
							\end{equation*}
							Note that $\mathbb X(s)=\boldsymbol{X}(s) \xi$, and in this case $\mathbf{M}(T,\a(T))=\boldsymbol{X}(T)^\top\boldsymbol{X}(T)$ and $ \boldsymbol{N}(s,\a(s))\equiv0$, so
							\begin{equation*}
								\dbE[\mathbf{M}(T,\a(T))|\sF_t]=\dbE\bigg[\exp\bigg\{ \int_t^T[2A(r,\a(r))-1]dr\bigg\}\Big|\sF_t\bigg].
							\end{equation*}
							Thus \rf{22.7.11.2} holds due to that $\a(\cd)$ is a Markov chain.
							Moreover, for the general situation, one can still prove that \rf{22.7.11.2} holds using a similar argument.
							Finally, by combining \rf{22.7.11.0}-\rf{22.7.11.2} and noting that $\a(t)=\vf$, we have
							\begin{align*}
								{V}(t,\xi,\vf)=&\ \mathbb{E}[L(t,\xi,\vf ; \boldsymbol{U} \xi) \mid \sF_{t}]\\
								=&\ \Big\langle\mathbb{E}\Big[\mathbf{M}(T,\a(T))+\int_{t}^{T} \boldsymbol{N}(s,\a(s)) d s \Big| \sF_{t}\Big] \xi, \xi\Big\rangle = \langle P(t,\vf) \xi, \xi\rangle.
							\end{align*}
							This completes the proof.
						\end{proof}

						\begin{proof}[Proof of \autoref{21.8.22.4}]
							First, we prove the boundedness of the process $P=\{P(t,i); (t,i)\in[0,T]\ts \cS\}$.
							From  \autoref{21.8.21.4}, we have that for every $t \in[0, T)$, the operator $\mathcal{N}_{t}$ defined in \rf{3.6} satisfies
							\begin{align}\label{eq:ntuu}
								[[\mathcal{N}_{t} u, u ]]=J_0(t, 0,\vf; u(\cd))\ges
								\e \mathbb{E} \int_{t}^{T}|u(s)|^{2} d s=\e[[u,u]], \q \forall u(\cd) \in \mathcal{U}[t, T],
							\end{align}
							which implies that the operator $\mathcal{N}_{t}$ is positive and invertible.
							Moreover, on one hand, for any initial state $\xi \in L_{\sF_{t}}^{\infty}\left(\Omega ; \mathbb{R}^{n}\right)$,
							by assertion (ii) of \autoref{Corollary3.5}, the related open-loop optimal control is given by
							\begin{align*}
								u^*_{t,\xi}(s)=-[\cN^{-1}_t\cL_t\xi](s),\q   s\in[t,T].
							\end{align*}
							Then, by substituting $u_{t, \xi}^{*}(\cd)$ into  \rf{3.16}, one has %obtain that
							\begin{equation}\label{21.8.23.1}
								\dbE\langle P(t,\vf) \xi, \xi\rangle=V_0(t,\xi,\vf)=\dbE\langle M(t) \xi, \xi\rangle-[[ \mathcal{N}_{t}^{-1} \mathcal{L}_{t} \xi, \mathcal{L}_{t} \xi]],
							\end{equation}
							where $\cN_t$ and $\cL_t$ are defined in \rf{3.6} and $M(\cd)$ is the first component of the solution of BSDE \rf{2.3}. Therefore,
							\begin{equation}\label{21.8.23.2}
								\mathbb{E}\langle P(t,\vf) \xi, \xi\rangle \leqslant \mathbb{E}\langle M(t) \xi, \xi\rangle.
							\end{equation}
							On the other hand, combining \rf{eq:ntuu} and \rf{21.8.23.1} leads to % and \rf{21.8.23.2} to obtain that
							\begin{equation}\label{21.8.23.3}
								\mathbb{E}\langle P(t,\vf) \xi, \xi\rangle \geqslant
								\mathbb{E}\langle M(t) \xi, \xi\rangle-\e^{-1} [[ \mathcal{L}_{t} \xi, \mathcal{L}_{t} \xi ]].
							\end{equation}
							Thus, from assertion (ii) of \autoref{Proposition3.2}, we have% implies that
							\begin{equation}\label{21.8.23.4}
								\mathbb{E}\langle P(t,\vf) \xi, \xi\rangle
								\ges \mathbb{E}\langle M(t) \xi, \xi\rangle-\e^{-1} K \mathbb{E}|\xi|^{2}
								=\mathbb{E}\left\langle\left[M(t)-\e^{-1} K I_{n}\right] \xi, \xi\right\rangle,
							\end{equation}
							where $K$ is a positive constant comes from \autoref{Proposition3.2}.
							Note that $\xi\in L^\i_{\sF_t}(\Omega;\dbR^n)$ is bounded and arbitrary, which, together with \rf{21.8.23.2} and \rf{21.8.23.4}, implies that
							$$
							M(t)-\e^{-1} K I_{n} \leqslant P(t,\vf) \leqslant M(t),\q (t,\vf)\in[0,T]\ts L_{\sF_t^\alpha}^2(\Omega;\mathcal{S}).
							$$
							Therefore, the process $P=\{P(t,i); (t,i)\in[0,T]\ts \cS\}$ is bounded follows from the above inequality and the fact that $M(\cd)$ is bounded (see \autoref{21.8.19.1}).
							
							\ms
							
							In the following, we  prove that it is left-continuous. For simplicity but without loss of generality, we only study the left-continuity at $t=T$, and the situation of $t\in(0,T)$ could be proved similarly by considering the related Problem (M-SLQ). Note that, owing to \rf{21.8.23.2} and \rf{21.8.23.3}, for every initial triple
							$(t,\xi,\vf) \in[0,T)\times L_{\sF_{t}}^{\infty}\left(\Omega;\mathbb{R}^{n}\right)\ts L_{\sF_t^\alpha}^2(\Omega;\mathcal{S})$,
							we have
							\begin{equation}\label{21.8.23.5}
								\mathbb{E}\langle M(t) \xi, \xi\rangle-\e^{-1}[[\mathcal{L}_{t} \xi, \mathcal{L}_{t} \xi ]]\leqslant \mathbb{E}\langle P(t,\vf) \xi, \xi\rangle \leqslant \mathbb{E}\langle M(t) \xi, \xi\rangle.
							\end{equation}
							Noting representation \rf{3.15}, we can rewrite $[[\cL_t\xi,\cL_t\xi]]$ as follows,
							\begin{align}\label{22.7.7.2}
								[[ \mathcal{L}_{t} \xi, \mathcal{L}_{t} \xi ]]=
								\mathbb{E} \int_{t}^{T} \Big\langle  \big[F_0(s,\a(s), \cX(s), \cY(s), \cZ(s))
								\cX^{-1}(t)\big]^{\top} \big[F_0(s,\a(s), \cX(s), \cY(s), \cZ(s))
								\cX^{-1}(t)\big] \xi, \xi \Big\rangle d s .
							\end{align}
							Again, note that $\xi\in L^\i_{\sF_t}(\Omega;\dbR^n)$ is bounded and arbitrary, which, together with \rf{21.8.23.5} and \rf{22.7.7.2}, implies that for any $(t,\vf)\in[0,T]\ts L_{\sF_t^\alpha}^2(\Omega;\mathcal{S})$,
							\begin{align*}
								\ds &M(t) -\e^{-1}\big[\cX^{-1}(t)\big]^{\top}
								\int_{t}^{T}F_0(s,\a(s), \cX(s), \cY(s), \cZ(s))^\top F_0(s,\a(s), \cX(s), \cY(s), \cZ(s))ds\ \cX^{-1}(t)\\
								\ns\ds &\leqslant P(t,\vf) \leqslant M(t).
							\end{align*}
							Note that $\cX^{-1}(t)$ and $M(\cd)$ are $\sF_t$-measurable, and $P(\cd,\a(\cd))\in L_{\mathbb{F}}^{\infty}(0, T ; \mathbb{S}^{n})$.
							Hence, by taking the conditional expectations w.r.t. $\sF_t$ on both side of the above inequality, one has
							\begin{align*}
								\ds& M(t) -\e^{-1}\big[\cX^{-1}(t)\big]^{\top} \mathbb{E} \bigg[\int_{t}^{T}
								F_0(s,\a(s), \cX(s), \cY(s), \cZ(s))^\top F_0(s,\a(s), \cX(s), \cY(s), \cZ(s))
								d s \Big| \sF_{t}\bigg] \cX^{-1}(t)\\
								\ns\ds & \leqslant P(t,i) \leqslant M(t).
							\end{align*}
							Finally, letting $t \uparrow T$ and using the dominated convergence theorem, we get that
							$$
							\lim _{t \uparrow T} P(t,i)= M(T)=G(i)=P(T,i).
							$$
							Hence, the second purpose is obtained.
						\end{proof}

						\begin{proof}[Proof of \autoref{21.8.31.3}]
							First, we prove the assertion (i).
							For any fix initial triple $(\sigma,\xi,\vf)\in\cT[0,\tau)
							\ts L_{\sF_{\sigma}}^{\infty}(\Omega ; \mathbb{R}^{n})\ts
							L_{\sF_{\sigma}^\a}^{2}(\Omega ; \cS)$
							and $u(\cd) \in \mathcal{U}[\sigma, \tau]$, denote by
							$\left\{X_{1}(s)\right\}_{s\in [\sigma, \tau]}$
							the corresponding solution of state equation \rf{state} w.r.t. $(\sigma,\xi,\vf)$ over $[\sigma,\t]$.
							Consider Problem (M-SLQ) for the initial triple $\left(\tau, X_{1}(\tau),\a(\t)\right)$.
							By the assumptions, there exists $\e>0$ such that \rf{21.8.21.3.2} holds. Hence, from  \autoref{21.8.21.4} and  representation \rf{21.8.21.6},
							Problem (M-SLQ) is solvable at $\tau$ and
							$$
							{V}\left(\tau, X_{1}(\tau),\a(\t)\right)
							=\langle P(\tau,\a(\t)) X_{1}(\tau), X_{1}(\tau)\rangle .
							$$
							Now for Problem (M-SLQ), let $v^{*} (\cd)\in \mathcal{U}[\tau, T]$ be an open-loop optimal control w.r.t. the initial triple $\left(\tau, X_{1}(\tau),\a(\tau)\right)$ and let $\{X_{2}^{*}(s)\}_{s\in[\tau, T]}$ be the corresponding open-loop optimal state process. Define
							$$
							\left[u \oplus v^{*}\right](s)= \begin{cases}u(s), & s \in[\sigma, \tau), \\ v^{*}(s), & s \in[\tau, T],\end{cases}\qq \hb{and}\qq
							\widetilde{X}(s)= \begin{cases}X_{1}(s), & s \in[\sigma, \tau), \\ X_{2}^{*}(s), & s \in[\tau, T].\end{cases}
							$$
							Then, it is easy to see that  $[u \oplus v^{*}](\cd)\in \mathcal{U}[\sigma, T]$ and $\widetilde X(\cd)$ satisfies the following equation: %For $s \in[\sigma, T]$,
							$$
							\left\{\begin{aligned}
								\ds &d \widetilde X(s)=\big[A(s,\a(s)) \widetilde X(s)+B(s,\a(s))[u \oplus v^{*}](s)\big] d s
								+\big[C(s,\a(s)) \widetilde X(s)+D(s,\a(s))[u \oplus v^{*}](s)\big] d W(s), \\%s \in[\sigma, T] \\
								\ns\ds &\widetilde X(\sigma)= \xi,\q\a(\sigma)=\vf.
							\end{aligned}\right.
							$$
							Furthermore, we have %It follows that
							\begin{align}
								&J_0\left(\sigma, \xi,\vf; [u \oplus v^{*}](\cd)\right) \nonumber\\
								&=\mathbb{E}\bigg[\left\langle G(\a(T)) X_{2}^{*}(T), X_{2}^{*}(T)\right\rangle+\int_{\tau}^{T}\left\langle
								\left(\begin{array}{cc}
									Q(s,\a(s)) & S(s,\a(s))^{\top} \\
									S(s,\a(s)) & R(s,\a(s))
								\end{array}\right)\left(\begin{array}{c}
									X_{2}^{*}(s) \\
									v^{*}(s)
								\end{array}\right),\left(\begin{array}{c}
									X_{2}^{*}(s) \\
									v^{*}(s)
								\end{array}\right)\right\rangle d s\bigg] \nonumber\\
								&\q~ +\mathbb{E}\bigg[\int_{\sigma}^{\tau}\left\langle\left(\begin{array}{cc}
									Q(s,\a(s)) & S(s,\a(s))^{\top} \\
									S(s,\a(s)) & R(s,\a(s))
								\end{array}\right)
								\left(\begin{array}{c}
									X_{1}(s) \\
									u(s)
								\end{array}\right),\left(\begin{array}{c}
									X_{1}(s) \\
									u(s)
								\end{array}\right)\right\rangle d s\bigg] \nonumber\\
								&= \mathbb{E}\bigg[V\big(\tau, X_{1}(\tau),\a(\t) \big)
								+\int_{\sigma}^{\tau}\left\langle\left(\begin{array}{cc}
									Q(s,\a(s)) & S(s,\a(s))^{\top} \\
									S(s,\a(s)) & R(s,\a(s))
								\end{array}\right)\left(\begin{array}{c}
									X_{1}(s) \\
									u(s)
								\end{array}\right),\left(\begin{array}{c}
									X_{1}(s) \\
									u(s)
								\end{array}\right)\right\rangle d s\bigg] \nonumber\\
								&= J^\t_0(\sigma, \xi,\vf ; u(\cd)), \label{21.8.22.1.1}
							\end{align}
							where in the last equation we employ relation \rf{21.8.21.6} and the definition of $J^\t_0$. % in \rf{eq:Jtau}. %\autoref{remark3.11}.%, i.e.,
							%
							% $$J\big(\tau, X_{1}(\tau),\a(\t) ; v^{*}(\cd)\big)={V}(\t,X_{1}(\tau),\a(\t))
							% =\big\langle P(\tau,\a(\t)) X_{1}(\tau), X_{1}(\tau)\big\rangle.$$
							%
							In particular, when $\xi=0$, from \autoref{21.8.21.4} we have
							\begin{equation}\label{22.7.21.1}
								J^\t_0(\sigma, 0,\vf ; u(\cd))=J_0(\sigma, 0,\vf ; [u \oplus v^{*}](\cd)) \geqslant \e \mathbb{E} \int_{\sigma}^{T}\big|\left[u \oplus v^{*}\right](s)\big|^{2} d s \geqslant \e \mathbb{E} \int_{\sigma}^{\tau}|u(s)|^{2} d s,
							\end{equation}
							which combining \autoref{remark5.6}, assertion (ii) of \autoref{Corollary3.5}, and \autoref{Theorem4.2}
							imply assertion (i) holds.
							
							\ms
							
							Now, we consider assertions (ii) and (iii).
							Note that \rf{21.8.22.1.1} still holds when the expectation $\dbE$ replaced by the conditional expectation $\dbE_\sigma[\cd]$. Therefore, % so we have % deduce that
							\begin{equation}\label{21.8.22.2}
								J^\t(\sigma, \xi,\vf; u(\cd)) \geqslant\big\langle P(\sigma,\vf) \xi, \xi\big\rangle.
							\end{equation}
							For Problem (M-SLQ), assume that $u^{*}(\cd) \in \mathcal{U}[\sigma, T]$ is an open-loop optimal control
							w.r.t. $(\sigma,\xi,\vf)$, and denote by $X^{*}=\left\{X^{*}(s) ; \sigma \leqslant s \leqslant T\right\}$  the related open-loop optimal state process, i.e., for $s \in[\sigma, T],$
							$$
							\left\{\begin{aligned}
								\ds d X^{*}(s) &=\big[A(s,\a(s)) X^{*}(s)+B(s,\a(s)) u^{*}(s)\big] d s
								+\big[C(s,\a(s)) X^{*}(s)+D(s,\a(s)) u^{*}(s)\big] d W(s), \\
								\ns\ds X^{*}(\sigma) &=\xi,\q\a(\sigma)=\vf.
							\end{aligned}\right.
							$$
							Then from \autoref{Corollary4.5}, we see that for Problem (M-SLQ),
							the restriction $\left.u^{*}(\cd)\right|_{[\tau, T]}$ of $u^{*}(\cd)$ on the interval $[\tau, T]$ is optimal for the initial triple $\left(\tau, X^{*}(\tau),\a(\t)\right)$.
							Moreover, in \rf{21.8.22.1.1}, using  $\left.u^{*}(\cd)\right|_{[\sigma, \tau]}$ and $\left.u^{*}(\cd)\right|_{[\tau, T]}$ to replace $u(\cd)$ and $v^{*}(\cd)$, respectively, and
							note that
							$u^{*}(\cd)|_{[\sigma, \tau]} \oplus u^{*}(\cd)|_{[\tau, T]}=u^{*}(\cd),$
							we deduce
							\begin{equation}\label{21.8.22.3}
								J^\t\big(\sigma, \xi,\vf;\left.u^{*}(\cd)\right|_{[\sigma, \tau]}\big)
								=J\left(\sigma, \xi,\vf; u^{*}(\cd)\right)
								=\big\langle P(\sigma,\vf) \xi, \xi\big\rangle .
							\end{equation}
							Therefore, combining \rf{21.8.22.2} and \rf{21.8.22.3}, assertions (ii) and (iii) hold.
						\end{proof}

						\begin{remark}\rm
							Since many notations have been adopted, we finally summarize some frequently used notations so as not to be confused. Recall that the quadruple $(X(\cd),Y(\cd),Z(\cd),\Gamma(\cd))$ is the solution of SDE \rf{state} and BSDE \rf{21.8.20.1},  $(X^*(\cd),Y^*(\cd),Z^*(\cd),\Gamma^*(\cd))$ is the solution of FBSDE \rf{4.1}, and  $(X^*_\t(\cd),Y^*_\t(\cd),Z^*_\t(\cd),\G^*_\t(\cd))$ is the solution of FBSDE \rf{4.1} with $u^*(\cd)$ replaced by $u^*(\cd)|_{[\t,T]}$. Moreover, the quadruple $\big(\boldsymbol{X}(\cd), \boldsymbol{Y}(\cd),\boldsymbol{Z}(\cd),\boldsymbol{\G}(\cd)\big)$ is the solution of FBSDE \rf{22.6.25.2},
							$(\tilde{X}(\cd),\tilde{Y}(\cd),\tilde{Z}(\cd),\tilde \G(\cd))$ is the solution of FBSDE \rf{3.4}, $(\bar{X}(\cd),\bar{Y}(\cd),\bar{Z}(\cd),\bar\G(\cd))$ is the solution of FBSDE \rf{3.5}, and $(\widehat{X}^*(\cd),\widehat{Y}^*(\cd),\widehat{Z}^*(\cd),\widehat{\G}^*(\cd))$ is the solution of FBSDE \rf{4.4}. Besides, the process $\hat P(\cd,\a(\cd))$, together with $\hat\L (\cd)$ and $\hat \z(\cd)$, is the solution of SRE \rf{21.8.23.6}, and $P(\cd,\a(\cd))$ is the process that satisfies the relation \rf{21.8.21.6}.
						\end{remark}

						\section{Example}\label{Example}

						As presented in the introduction, in a realistic market, it is better to allow the market parameters to depend on both the Markov chain and Brownian motion, due to the interest rates, stock rates, and volatilities being affected by the uncertainties caused by the Brownian motion.
						In this section, as an application of our main results, we give an example of the continuous-time mean-variance portfolio selection problem, which partially develops the work of Li--Zhou--Lim \cite{Li-Zhou-Lim 2002} to the Markovian regime switching system with random coefficients.
						For simplicity, we would like to let $m=n=1$.
						
						\ms
						
						Suppose there is a market in which two assets are traded over a finite horizon $[0,T]$. One of the assets is the bond whose asset price $S_{0}(\cd)$ is subject to the following ordinary differential equation:
						$$
						\left\{\begin{array}{l}
							\ds d S_{0}(s)=r(s) S_{0}(s) d s, \quad s \in[0, T], \\
							\ns\ds S_{0}(0)=S_{0}>0,
						\end{array}\right.
						$$
						where $r(\cd)$ is a positive and bounded function, which represents the bond's interest rate.
						The other one of the assets is stock whose asset price $S_{1}(\cd)$ satisfies the following stochastic differential equation:
						$$
						\left\{\begin{array}{l}
							\ds d S_{1}(s)=S_{1}(s)\Big\{b(s,\a(s)) d t
							+ \sigma(s,\a(s)) d W(s)\Big\}, \quad s \in[0, T], \\
							\ns\ds S_{1}(0)=S_{1}>0,\q \a(0)=i_0,
						\end{array}\right.
						$$
						where $b(s,\a(s))$ is the appreciation rate and
						$\sigma(s,\a(s))$ is the volatility or the dispersion of the stock.
						Assume that for any choice of $i\in\cS$, both $b(\cd,i)$ and $\sigma(\cd,i)$ are in the space  $L_{\dbF^W}^\i(0,T;\dbR)$, and there is a positive constant $\d$ such that
						$$
						\sigma(s,\a(s))^2 \geq \delta, \quad \forall s \in[0, T].
						$$
						Assume that the trading of shares takes place continuously and transaction costs and consumption are not considered. Then, a small investor's self-financing wealth process $X(\cd)$ satisfies the following SDE:
						\begin{equation}\label{eq:wealth}
							\left\{\begin{aligned}
								dX(s)
								%=& \sum_{i=0}^{m} N_{i}(t) d P_{i}(t) \\
								%=&\Big\{r(t) N_{0}(t) P_{0}(t)+\sum_{i=1}^{m} b_{i}(t,\a(t)) N_{i}(t) P_{i}(t)\Big\} dt + \sum_{i=1}^{m} N_{i}(t) P_{i}(t) \sum_{j=1}^{m} \sigma_{i j}(t,\a(t)) d W_{j}(t) \\
								=&\Big\{r(s) X(s)+[b(s,\a(s))-r(s)] u(s)\Big\} d s
								+\sigma(s,\a(s)) u(s) d W(s),\q s\in[0,T], \\
								X(0)=& x_{0}>0,\q \a(0)=i_0,
							\end{aligned}\right.
						\end{equation}
						where $u(\cd)\in L_\dbF^2(0,T;\dbR)$ is a portfolio of the investor, which may change over time $s\in[0,T]$.
						Note that $u(s)=0$ implies that the investor invests his/her total wealth in the bond at time $s\in[0,T]$.
						
						\ms
						
						As shown in Li--Zhou--Lim \cite{Li-Zhou-Lim 2002},
						the mean-variance portfolio selection refers to the problem of finding an allowable investment policy (i.e., a dynamic portfolio satisfying all the constraints) such that the expected terminal wealth satisfies $\mathbb{E}[X(T)] = d$ while the risk measured by the variance of the terminal wealth is
						$$
						\operatorname{Var}(X(T)) = \mathbb{E}\big[X(T) - \mathbb{E}[X(T)]\big]^{2} = \mathbb{E}\big[(X(T)-d)^{2}\big].
						$$
						Then we consider the following dynamic stochastic optimization
						\begin{equation}\label{eq:mv}
							\left\{\begin{aligned}
								\min ~ & \mathbb{E}\big[(X(T)-d)^{2}\big], \\
								\mbox{s.t. } & \mathbb{E}[X(T)] = d, \\
								& (X(\cdot), u(\cdot)) \mbox{ satisfies (\ref{eq:wealth})}.
							\end{aligned}\right.
						\end{equation}
						Since (\ref{eq:mv}) is a convex optimization problem, the equality constraint $\mathbb{E}[X(T)] = d$ can be dealt with by introducing a Lagrange multiplier $\mu \in \mathbb{R}$. Therefore, we have
						$$
						\mathbb{E}\big[(X(T)-d)^{2}\big] - 2\mu(\mathbb{E}[X(T)] - d)
						= \mathbb{E}\big[(X(T) - (d+\mu))^{2}\big] - \mu^2
						= \mathbb{E}[X(T) - \gamma]^{2} - \mu^2,
						$$
						where $\gamma = d + \mu$.
						Now, if we set $$\widetilde{X}(s) =X(s) - \gamma \exp\Big\{-\int_s^T r(s)ds\Big\},$$ then (\ref{eq:mv}) can be transferred into the following problem
						\begin{equation}\label{eq:mv1}
							\left\{\begin{aligned}
								\ds \min ~ & \mathbb{E}[\widetilde X(T)^2] \\
								\ns\ds \mbox{s.t. } & d\widetilde X(s)
								%=& \sum_{i=0}^{m} N_{i}(t) d P_{i}(t) \\
								%=&\Big\{r(t) N_{0}(t) P_{0}(t)+\sum_{i=1}^{m} b_{i}(t,\a(t)) N_{i}(t) P_{i}(t)\Big\} dt + \sum_{i=1}^{m} N_{i}(t) P_{i}(t) \sum_{j=1}^{m} \sigma_{i j}(t,\a(t)) d W_{j}(t) \\
								=\Big\{r(s) \widetilde X(s)+[b(s,\a(s))-r(s)] u(s)\Big\} d s
								+\sigma(s,\a(s)) u(s) d W(s),\q s\in[0,T], \\
								\ns\ds &\ \widetilde X(0) = x_0 - \gamma \exp\Big\{-\int_0^T r(s)ds\Big\},\q \a(0)=i_0.
							\end{aligned}\right.
						\end{equation}
						It is easy to see that
						Problem (\ref{eq:mv1}) is a special case of Problem (M-SLQ) with
						$$G(\a(T)) = 1, \q A(s,\a(s)) = r(s),\q B(s,\a(s)) = b(s,\a(s)) - r(s),\q
						D(s,\a(s)) = \sigma(s,\a(s)),\q s\in[0,T],$$
						and other coefficients are zero.
						Then the SRE associated to Problem (\ref{eq:mv1}) is:
						%								In this section we establish the solvability of the following
						%								stochastic Riccati equation (SRE, for short)
						%								%
						\begin{equation}\label{21.8.23.6.2}
							\left\{\begin{aligned}
								\ds d\hat P(s,\a(s))&=-\big[\hat{Q}(s,\a(s))
								+\hat{S}(s, \a(s))^\top \Th(s,\a(s))\big] ds\\
								\ns\ds &\q +\hat \Lambda(s) d W(s) +\hat \zeta(s)\bullet d\wt{N}(s), \q s \in[0, T], \\
								\ns\ds \hat P(T,\a(T))&=1, \q \a(0)=i_0,
							\end{aligned}\right.
						\end{equation}
						where $i_0\in\cS$ and for $s\in[0,T]$,
						\begin{equation}\label{hatsq.2}
							\begin{aligned}
								\ds \hat{Q}(s,\a(s))&\deq 2r(s)\hat P(s,\a(s)),\qq
								\hat{S}(s,\a(s))\deq [b(s,\a(s)) - r(s)] \hat P(s,\a(s))
								+\sigma(s,\a(s))\hat \Lambda(s), \\
								\ns\ds \hat{R}(s,\a(s))&\deq \sigma(s,\a(s))^{2} \hat P(s,\a(s)),\qq
								\Theta(s,\a(s))\deq-\hat{R}(s,\a(s))^{-1}\hat{S}(s,\a(s)).
							\end{aligned}
						\end{equation}
						Now, we point out that conditions {\rm (\textbf{H})} and \rf{21.8.21.3.2} hold for this case. In fact, it is trivial to verify condition {\rm (\textbf{H})}. As for condition  \rf{21.8.21.3.2}, by solving the linear SDE of \rf{eq:mv1}, it is easy to verify that there is a positive constant  $\e$ such that
						\begin{align*}
							\dbE[\widetilde X(T)^2]\geqslant\e \mathbb{E}\int_{0}^{T}|u(s)|^{2}ds, \q \forall u(\cd) \in \mathcal{U}[0, T].
						\end{align*}
						Therefore, \autoref{21.9.3.1} and \autoref{21.11.26.1} deduce the following conclusion concerning the mean-variance portfolio selection problem.
						\begin{theorem}\label{22.6.20.1} \sl
							SRE \rf{21.8.23.6.2} admits a unique adapted solution $(\hat P(\cd,,\a(\cd)), \hat \Lambda(\cd),\hat \zeta(\cd)) \in$ $L_{\mathbb{F}}^{\infty}(0, T ; \dbR) \times L_{\mathbb{F}}^{2}(0, T ; \dbR)\times L_{\dbF}^2(0,T;\dbR))$ and
							the mean-variance problem \rf{eq:mv1} is uniquely solvable.
							%
							%for all $i\in\cS$, SRE \rf{21.8.23.6} admits a unique adapted solution \tc{$(P(\cd,i), \Lambda(\cd),\zeta(\cd)) \in$ $L_{\mathbb{F}^W}^{\infty}(0, T ; \mathbb{S}^{n}) \times L_{\mathbb{F}^W}^{2}(0, T ; \mathbb{S}^{n})\times L_{\dbF^\a}^2(0,T;\mathbb{M}_{D}(\mathbb{S}^n))$} such that \rf{21.9.11.2} holds for some positive constant $\lambda$. In addition,
							%
							Moreover, the unique optimal investment strategy $u^{*}(\cd)$ has the following representation:
							\begin{equation}\label{22.6.7.1.2}
								u^{*}(s)=\Theta(s,\a(s)) X^{*}(s), \q s \in[0, T],
							\end{equation}
							where $\Theta(\cd)$ is defined in \rf{hatsq.2}
							%							%
							and $X^{*}(\cd)$ is the solution of the following closed-loop system:
							\begin{equation*}
								\left\{\begin{aligned}
									\ds &d X^{*}(s)= \Big\{r(s)+[b(s,\a(s)) - r(s)]\Theta(s,\a(s))\Big\} X^{*}(s) d s
									%									%
									+\sigma(s,\a(s)) \Theta(s,\a(s)) X^{*}(s) d W(s),\ \ s \in[0, T], \\
									%									%
									\ns\ds &X^{*}(0) =x(0) - \gamma \exp\Big\{-\int_0^T r(s)ds\Big\},\q \a(0)=i_0.
								\end{aligned}\right.
							\end{equation*}
						\end{theorem}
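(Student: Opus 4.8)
The plan is to recognize Problem \rf{eq:mv1} as a particular instance of Problem (M-SLQ) and then to invoke \autoref{21.9.3.1} and \autoref{21.11.26.1} directly. Concretely, \rf{eq:mv1} is the state equation \rf{state} with cost \rf{cost} in the scalar case $n=m=1$, with $G(\a(T))=1$, $A(s,\a(s))=r(s)$, $B(s,\a(s))=b(s,\a(s))-r(s)$, $C(s,\a(s))=0$, $D(s,\a(s))=\sigma(s,\a(s))$, $Q\equiv S\equiv R\equiv0$, and initial data $\xi=x_0-\g\exp\{-\int_0^Tr(\tau)\,d\tau\}$, $\vf=i_0$. Condition {\rm(\textbf{H})} is then immediate from the standing assumptions that $r(\cd)$ is positive and bounded and that $b(\cd,i),\sigma(\cd,i)\in L^\i_{\dbF^W}(0,T;\dbR)$ for every $i\in\cS$. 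Substituting this coefficient dictionary into \rf{hatsq} reproduces exactly \rf{hatsq.2}, so that SRE \rf{21.8.23.6} specializes to SRE \rf{21.8.23.6.2} and the closed-loop system in \autoref{21.11.26.1} specializes to the one displayed in the theorem; hence, once the uniform convexity condition \rf{21.8.21.3.2} has been verified, \autoref{21.9.3.1} yields the unique solvability of \rf{21.8.23.6.2} together with $\hat R(s,\a(s))=\sigma(s,\a(s))^2\hat P(s,\a(s))\ges\e$, and \autoref{21.11.26.1} yields the unique open-loop solvability of \rf{eq:mv1} and the feedback form \rf{22.6.7.1.2}.

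It therefore suffices to verify \rf{21.8.21.3.2}, namely $\dbE[\widetilde X(T)^2]\ges\e\,\dbE\int_0^T|u(s)|^2\,ds$ for all $u(\cd)\in\cU[0,T]$, where $\widetilde X(\cd)$ solves the linear SDE in \rf{eq:mv1} with $\widetilde X(0)=0$. I would solve this SDE by variation of constants: with $\Phi(s)\deq\exp\{\int_0^sr(\tau)\,d\tau\}$, bounded above and below by positive constants, one gets $\widetilde X(T)=\Phi(T)\int_0^T\Phi(s)^{-1}\big([b(s,\a(s))-r(s)]u(s)\,ds+\sigma(s,\a(s))u(s)\,dW(s)\big)$. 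Applying It\^{o}'s formula to $\widetilde X(\cd)^2$ and taking expectations gives $\dbE[\widetilde X(T)^2]=\dbE\int_0^T\big[2r(s)\widetilde X(s)^2+2(b(s,\a(s))-r(s))\widetilde X(s)u(s)+\sigma(s,\a(s))^2u(s)^2\big]ds$; then $\sigma^2\ges\d$, $r\ges0$, Young's inequality applied to the cross term, and the a priori estimate $\dbE[\widetilde X(t)^2]\les K\,\dbE\int_0^T|u(s)|^2\,ds$ coming from Gronwall's inequality together give \rf{21.8.21.3.2} on a sufficiently short horizon, and one extends it to an arbitrary $T$ by a localization/patching argument over a finite partition of $[0,T]$, as in Li--Zhou--Lim \cite{Li-Zhou-Lim 2002}. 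The $L^\i$ bounds on $r$, $b-r$, $\sigma$ together with the non-degeneracy $\sigma^2\ges\d$ are exactly what keep the constant $\e$ bounded away from $0$ uniformly in $u(\cd)$.

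The step I expect to be the main obstacle is precisely this uniform convexity estimate: it is not a matter of inspection, because in the representation of $\widetilde X(T)$ the finite-variation drift part and the It\^{o} part are correlated, so the naive bound $\dbE[(\text{drift}+\text{It\^{o}})^2]\gtrsim\|u\|^2$ fails and the elementary Young--Gronwall estimate only closes for short horizons, which is why the patching over subintervals is needed. Everything else is then routine: with \rf{21.8.21.3.2} in hand, one simply specializes \autoref{21.9.3.1} and \autoref{21.11.26.1} through the coefficient dictionary above to read off the unique solvability of SRE \rf{21.8.23.6.2}, the bound $\hat R=\sigma^2\hat P\ges\e$, the unique solvability of the mean-variance problem \rf{eq:mv1}, and the optimal strategy \rf{22.6.7.1.2} with the stated closed-loop dynamics.
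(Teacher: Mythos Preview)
Your overall strategy is exactly the paper's: recognize \rf{eq:mv1} as a special case of Problem (M-SLQ) with the coefficient dictionary you wrote down, check condition (\textbf{H}), verify the uniform convexity \rf{21.8.21.3.2}, and then read off the conclusion from \autoref{21.9.3.1} and \autoref{21.11.26.1}. The paper does precisely this and, like you, singles out \rf{21.8.21.3.2} as the only nontrivial point; it then simply asserts that ``by solving the linear SDE of \rf{eq:mv1}, it is easy to verify'' the bound, without further detail.

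Where your proposal goes beyond the paper is in sketching an actual proof of \rf{21.8.21.3.2}, and there the patching step is a genuine gap. Your It\^{o}--Young--Gronwall argument correctly yields $\dbE[\widetilde X(T)^2]\ges\e\|u\|^2$ on short horizons, but only for initial state $\widetilde X(0)=0$. When you partition $[0,T]$ and try to concatenate, the intermediate states $\widetilde X(t_k)$ are nonzero, so the short-horizon estimate no longer applies on each piece; the cross terms $2\widetilde X(t_k)\cdot(\text{increment})$ reappear and the bounds do not telescope. The reference to Li--Zhou--Lim \cite{Li-Zhou-Lim 2002} does not help here: that paper treats a constrained problem with deterministic coefficients and does not contain a patching argument of this type. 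Since the paper itself leaves \rf{21.8.21.3.2} as an unproved assertion, your proposal is already at the paper's level of rigor; but if you want a complete argument you should replace the patching by a global completion-of-squares with a suitable weight process (e.g., apply It\^{o} to $p(s)\widetilde X(s)^2$ with $p$ chosen so that the resulting quadratic form in $(\widetilde X,u)$ dominates $\e u^2$ pointwise), rather than rely on a subinterval decomposition.
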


						\begin{remark}\rm
							The above result partially develops the mean-variance problems of Li--Zhou--Lim \cite{Li-Zhou-Lim 2002} to the Markovian regime switching system with random coefficients, which goes beyond the framework of Sun--Xiong--Yong \cite{Sun-Xiong-Yong-21}.
							Note that we consider the one-dimensional state case of two assets (one bond and one stock) to be just for simplicity of presentation, and the multi-dimensional case of $m+1$ assets (one bond and $m$ stocks) can be proved using a similar argument.
						\end{remark}

						\section{Conclusion}\label{Conclusion}
						
						This paper extends the work of Sun--Xiong--Yong \cite{Sun-Xiong-Yong-21} to the framework within the Markovian regime switching system, obtains the solvability of SRE \rf{21.8.23.6} with jumps and random coefficients, gets the closed-loop representation of the open-loop optimal control, and gives a financial application of the continuous-time mean-variance portfolio selection problem, which develops the work of Li--Zhou--Lim \cite{Li-Zhou-Lim 2002}.
						In addition, a new point of view for the uniform convexity of the cost functional is presented, and the equivalence between Problem (M-SLQ)$_0$ and Problem ${(\hb{M-SLQ})}$ is obtained.
						Note that it remains open if someone could get the closed-loop solvability in this model.

						\section*{Acknowledgements}
						
						The authors would like to thank the associate editor and the anonymous referees for their insightful comments that improve the quality of this paper.

					\end{document}